\documentclass[10pt,reqno]{amsart}

\usepackage{a4wide}
\usepackage{dsfont}
\usepackage{graphicx}
\usepackage{amssymb}
\usepackage[all]{xy}

% theorems (numbered by the counter `proposition' within section)

\newtheorem{proposition}{Proposition}[section]
\newtheorem{theorem}[proposition]{Theorem}
\newtheorem{lemma}[proposition]{Lemma}

\theoremstyle{remark}
\newtheorem{definition}[proposition]{Definition}
\newtheorem{remark}[proposition]{Remark}

% general macros

\newcommand{\cst}{\ifmmode\mathrm{C}^*\else{$\mathrm{C}^*$}\fi}
\newcommand{\st}{\;\vline\;}
\newcommand{\St}{{\bigl.\st\bigr.}}
\newcommand{\CC}{\mathbb{C}}
\newcommand{\RR}{\mathbb{R}}
\newcommand{\ZZ}{\mathbb{Z}}
\newcommand{\TT}{\mathbb{T}}

\newcommand{\tens}{\otimes}
\newcommand{\id}{\mathrm{id}}
\newcommand{\comp}{\!\circ\!}
\newcommand{\I}{\mathds{1}}
\newcommand{\hh}[1]{\widehat{#1}}
\newcommand{\bb}[1]{\overline{#1}}

% counter for broken lists

\newcounter{c}

% macros for this paper

\newcommand{\GG}{\mathbb{G}}
\newcommand{\HH}{\mathbb{H}}
\newcommand{\cA}{\mathcal{A}}
\newcommand{\cB}{\mathcal{B}}
\newcommand{\II}{\mathds{I}}
\newcommand{\vtens}{\bar{\tens}}
\newcommand{\nc}{{\text{---}\scriptscriptstyle{\|\cdot\|}}}
\newcommand{\uu}{{\scriptscriptstyle\mathrm{u}}}
\newcommand{\tp}{\xymatrix{*+<.7ex>[o][F-]{\scriptstyle\top}}}
\newcommand{\cI}{\mathcal{I}}
\newcommand{\ww}{\mathrm{W}}
\newcommand{\WW}{{\mathds{V}\!\!\text{\reflectbox{$\mathds{V}$}}}}
\newcommand{\Ww}{\mathds{W}}
\newcommand{\wW}{\text{\reflectbox{$\Ww$}}\:\!} % requires graphicx, only works in pdf
\newcommand{\pir}{\pi_r}

% % C*-algebras

\newcommand{\sA}{\mathsf{A}}
\newcommand{\sB}{\mathsf{B}}
\newcommand{\sC}{\mathsf{C}}
\newcommand{\sM}{\mathsf{M}}
\newcommand{\sN}{\mathsf{N}}

% % Hilbert spaces

\newcommand{\sH}{\mathsf{H}}
\newcommand{\sK}{\mathsf{K}}

% general MathOperators

\DeclareMathOperator{\C}{C}
\DeclareMathOperator{\D}{D}
\DeclareMathOperator{\M}{M}
\DeclareMathOperator{\Mor}{Mor}
\DeclareMathOperator{\cc}{c}
\DeclareMathOperator{\B}{B}
\DeclareMathOperator{\K}{\mathcal{K}}

% MathOperators for this paper

\DeclareMathOperator{\vN}{vN}

\numberwithin{equation}{section}

\vfuzz=5pt

\raggedbottom

\begin{document}

\author{Matthew Daws}
\address{School of Mathematics, University of Leeds, Leeds LS2 9JT, United Kingdom}
\email{matt.daws@cantab.net}

\author{Pawe{\l} Kasprzak}
\address{Department of Mathematical Methods in Physics, Faculty of Physics, University of Warsaw, Poland
and Institute of Mathematics of the Polish Academy of Sciences,
ul.~\'Sniadeckich 8, 00--956 Warszawa, Poland}  \email{pawel.kasprzak@fuw.edu.pl}

\author{Adam Skalski}
\address{Institute of Mathematics of the Polish Academy of Sciences,
ul.~\'Sniadeckich 8, 00--956 Warszawa, Poland}
\email{a.skalski@impan.pl}

\author{Piotr M.~So{\l}tan} \address{Department of Mathematical Methods in Physics, Faculty of Physics, University of Warsaw, Poland}
\email{piotr.soltan@fuw.edu.pl}

\title{Closed quantum subgroups of locally compact quantum groups}

\keywords{Quantum group, quantum subgroup, representation, quasi-equivalence, Herz restriction theorem}
\subjclass[2010]{Primary: 20G42, 22D25, Secondary: 43A30, 46L89}

\dedicatory{Dedicated to Leonid Vainerman on the occasion of his 65th birthday}

\begin{abstract}
We investigate the fundamental concept of a closed quantum subgroup of a locally compact quantum group. Two definitions --- one due to S.~Vaes and one due to S.L.~Woronowicz --- are analyzed and relations between them discussed. Among many reformulations we prove that the former definition can be phrased in terms of quasi-equivalence of representations of quantum groups while the latter can be related to an old definition of Podle\'s from the theory of compact quantum groups. The cases of classical groups, duals of classical groups, compact and discrete quantum groups are singled out and equivalence of the two definitions is proved in the relevant context. A deep relationship with the quantum group generalization of Herz restriction theorem from classical harmonic analysis is also established, in particular, in the course of our analysis we give a new proof of Herz restriction theorem.
\end{abstract}

\date{\today}

\maketitle

\section{Introduction}

In this paper we study the notion of a closed quantum subgroup of a locally compact quantum group. The theory of quantum groups phrased in operator algebra language is already well established as a rapidly developing field on the border between noncommutative geometry and abstract harmonic analysis. Nevertheless, the fundamental notion of a closed (quantum) subgroup has not received enough attention so far. There have been several ``working definitions'' of such an object, but most efforts were directed toward developing other aspects of the theory. The first to look at quantum subgroups of (compact) quantum groups was P.~Podle\'s (\cite{podlesPhD,podles}, see also a later discussion in \cite{Pin}). His view was motivated by the straightforward noncommutative generalization of the inclusion homomorphism from the subgroup to the group and required the existence of a surjective $*$-homomorphism between the algebras of continuous functions on respective quantum groups.
This point of view, however, has many disadvantages and drastically limits the number of subgroups (e.g.~many quantum groups do not have the trivial subgroup in this sense). Soon it was realized that in the context of compact quantum groups one should rather require the existence of a surjective $*$-homomorphism between the \emph{universal} versions of algebras of continuous functions on respective quantum groups. This approach, adopted for example in \cite{BB} and \cite{nchom}, avoids the problems mentioned above and also enables a purely algebraic reformulation in terms of the underlying Hopf $*$-algebras. It was not clear, however, whether it would lead to a satisfactory notion for arbitrary \emph{locally compact} quantum groups.

In 2005 in \cite{Vaes} S.~Vaes proposed another definition of a closed quantum subgroup of a locally compact quantum group, phrased in the language of von Neumann algebras. This definition was used in the same paper to develop the full force of the theory of induced representations and homogeneous spaces for quantum groups. Earlier another definition of a closed subgroup of a locally compact quantum group was proposed in \cite[Definition 2.9]{VV} by Vaes and Vainerman. We show that the definition of Vaes and that given by Vaes and Vainerman are equivalent. It should be stressed that the argument needed to show that the definitions of a closed quantum subgroup proposed in \cite{Vaes,VV} give the standard notion of a closed subgroup in classical case is quite subtle. It can be formulated as saying that an inclusion of a closed subgroup $H$ into a locally compact group $G$ induces a normal inclusion of respective group von Neumann algebras $\vN(H)\hookrightarrow\vN(G)$ and is
equivalent to the fact that the restriction to $H$ of regular representation of $G$ is quasi-equivalent to the regular representation of $H$. This is, in turn, equivalent to the conclusion of the Herz restriction theorem which says that the map of Fourier algebras associated to $H\hookrightarrow{G}$ is a surjective contraction (\cite{herz}, cf.~also Section \ref{commut}). All this has been known to the experts for a long time (cf.~\cite{h2,TT, DelDer, Der,VV}); a detailed proof can be found  in the 2008 thesis of C.~Zwarich (\cite{zwarich}).

The definition given in \cite{Vaes} is very well adapted to the problems studied in that paper, but it was not clear whether it is optimal in other contexts and how it relates to the notion studied earlier for compact quantum groups. As mentioned above it is also relatively difficult to see that it actually generalizes the classical notion of a closed subgroup. Yet another possible definition, related to the recently introduced notion of morphisms between quantum groups (\cite{MRW}), was suggested to us by S.L.~Woronowicz. Woronowicz's definition is phrased entirely in the language of \cst-algebras and it is notably easier to see that it generalizes the ordinary notion of a closed subgroup of a locally compact group (see Section \ref{commut}). The main focus of this paper is on understanding the relations between the definitions of a closed quantum subgroup of a locally compact quantum group given by Vaes and Woronowicz and providing their equivalent reformulations.

The definition of Woronowicz is deeply connected with the notion of a \cst-algebra generated by a quantum family of multipliers (which we analyze in Subsection \ref{morphS}) and turns out to be equivalent to the reformulation of the original idea of Podle\'s, i.e.~corresponds to the existence of a surjective $*$-homomorphism between the universal versions of the algebras of continuous, vanishing at infinity, functions on respective locally compact quantum groups. On the other hand the definition of Vaes can be rephrased in a simplified way (still in the von Neumann algebraic language) and turns out to be intimately connected with the notion of quasi-equivalence of representations of quantum groups (\cite{WorSol}, cf.~Theorem \ref{Vaes-equiv}). Moreover, we show that this definition of a closed subgroup of a quantum group is strongly tied to the generalization to quantum groups of the Herz restriction theorem (cf.~Remark \ref{Vf}).

We show that the definition of Vaes is stronger than the definition of Woronowicz, in the sense that if $\HH$ and $\GG$ are locally compact quantum groups and $\HH$  is a closed quantum subgroup of  $\GG$ in the sense of Vaes, then it is also a closed quantum subgroup of  $\GG$ in the sense of Woronowicz. Further we prove that they are equivalent in all special cases one usually considers: classical groups (both definitions describe the standard notion of a closed subgroup), duals of classical groups (both definitions describe a group epimorphism in the opposite direction), compact quantum groups and discrete quantum groups (Sections \ref{commut}, \ref{cocommut} and \ref{cptdiscr}). In particular this opens the way to finding all compact quantum subgroups of a given locally compact quantum group $\GG$ via the theory of idempotent states (as studied for example in \cite{pa}) since each compact quantum subgroup of $\GG$ gives rise to a state on the algebra of functions
on $\GG$, which is idempotent with respect to the convolution product, and such states can be sometimes computed directly using Fourier transform methods. In the context of compact quantum groups this strategy was employed in \cite{FST} to re-establish the list of all quantum subgroups of $\mathrm{SU}_q(2)$, originally found by Podle\'s in \cite{podles}.

In the course of our investigation we make crucial use of the quantum group versions of the Fourier and Fourier-Stieltjes algebras (cf.~\cite[Section 8]{disser}). It is worth noting that our work produces a new proof of the classical Herz restriction theorem (see Section \ref{commut}). In the group-dual case we use the results of M.~Ilie and R.~Stokke on weak$^*$-continuous maps of Fourier-Stieltjes algebras (\cite{is}) which we are also able to generalize (to some extent) to the quantum group setting (Proposition \ref{ISprop}). This exemplifies the connections of our article with recent extensions of noncommutative harmonic analysis to the context of locally compact quantum groups (see for example \cite{hnr} and references therein).

Finally let us note that the differences between the definitions of a closed quantum subgroup according to Vaes and Woronowicz bear a striking similarity to the interplay between the Kustermans-Vaes definition of a locally compact quantum group (formulated in \cite{KV}) and the definition of a quantum group used in \cite{WorSol,MRW} and based on the theory of manageable and modular multiplicative unitaries (\cite{mu,modmu}). Again, the former definition is stronger and in all examples one finds that the two approaches are equivalent. Moreover, in special cases of classical groups, duals of classical groups, compact and discrete quantum groups we have results on existence of Haar measures, so the Kustermans-Vaes approach is equivalent with the one used by So{\l}tan-Woronowicz.

At the present stage of research in the theory of quantum groups it is very difficult to predict whether the definitions of a closed quantum subgroup given by Vaes and Woronowicz are equivalent. We conjecture that in the full generality they are different. However, it seems very likely that in large classes of well-behaved locally compact quantum groups, e.g.~the regular or even semi-regular ones, the two definitions will turn out to be equivalent.

Let us give now a brief description of the paper. In the remainder of this section we collect necessary preliminaries from the theory of \cst-algebras (Subsection \ref{morphS}), locally compact quantum groups (Subsection \ref{QGsub}) and homomorphisms of quantum groups as defined in \cite{MRW} (Subsection \ref{morphism}). Section \ref{Reps} focuses on the theory of representations of quantum groups and the notion of quasi-equivalence of such representations. We also relate this notion to the problem of generation of \cst-algebras by quantum families of multipliers, which later turns out to be crucial for understanding the interplay between the definitions of closed quantum subgroups given by Vaes and Woronowicz. These are introduced in Section \ref{Subgroups} with the relations between them unraveled. We provide several equivalent reformulations of either definition and show that the former implies the latter (in the sense described above). We also give sufficient
conditions for the two definitions to be equivalent. Section \ref{commut} is devoted to the study of both definitions of a closed quantum subgroup in the special case of classical groups. We prove there in detail that both are equivalent to the standard definition of a closed subgroup and discuss the direct connection between the definition of Vaes and the Herz restriction theorem. Then in Section \ref{cocommut} we conduct a similar investigation for the case of duals of classical groups. In this case also the definitions of Vaes and Woronowicz agree. Finally in Section \ref{cptdiscr} we show that the two definitions are equivalent for compact and discrete quantum groups (more precisely a compact quantum group $\HH$ is a closed subgroup of a locally compact quantum group $\GG$ in the sense of Vaes if and only if it is a closed subgroup of $\GG$ in the sense of Woronowicz, and a similar result holds for subgroups of discrete quantum groups).

\subsection{\cst-algebras and morphisms}\label{morphS}

Throughout the paper we will use the language of the theory of \cst-algebras as introduced in \cite{pseu,unbo,gen,lance}. In particular for \cst-algebras $\sA$ and $\sB$ a \emph{morphism} from $\sA$ to $\sB$ is a $*$-homomorphism $\Phi$ from $\sA$ into the multiplier algebra $\M(\sB)$ of $\sB$ which is non-degenerate, i.e.~the set $\Phi(\sA)\sB$ of linear combinations of products of the form $\Phi(a)b$ ($a\in\sA$, $b\in\sB$) is dense in $\sB$ (by the Cohen factorization theorem this is equivalent to the condition that $\Phi(\sA)\sB=\sB$). The set of all morphisms from $\sA$ to $\sB$ will be denoted by $\Mor(\sA,\sB)$. The non-degeneracy of morphisms ensures that each $\Phi\in\Mor(\sA,\sB)$ extends uniquely to a unital $*$-homomorphism $\M(\sA)\to\M(\sB)$ which we will sometimes denote by $\bar{\Phi}$. This also defines the operation of composition of morphisms (see \cite{pseu,gen,lance}).
For a Hilbert space $\sH$ the \cst-algebra of compact operators on $\sH$ will be denoted by $\K(\sH)$. Any \cst-algebra $\sA$ acting on $\sH$ (written $A\subset\B(\sH)$) will act \emph{non-degenerately}, so that the identity map $\id_{\sA}\colon\sA\to\sA$ is a morphism from $\sA$ to $\K(\sH)$. More generally a \emph{representation} of $\sA$ on $\sH$ is by definition an element of $\Mor\bigl(\sA,\K(\sH)\bigr)$.

The notion of a morphism of \cst-algebras generalizes that of a continuous map between locally compact Hausdorff spaces. We have the following well known result:

\begin{theorem}\label{std}
Let $X$ and $Y$ be locally compact Hausdorff spaces and let $\sB=\C_0(X)$ and $\sA=\C_0(Y)$. Then
\begin{enumerate}
\item any continuous $\phi\colon{X}\to{Y}$ defines a morphism $\Phi\in\Mor(\sA,\sB)$ via
\begin{equation}\label{Phif}
\Phi(f)=f\comp\phi,\qquad(f\in\sA);
\end{equation}
\item for any $\Phi\in\Mor(\sA,\sB)$ there exists a continuous $\phi\colon{X}\to{Y}$ such that \eqref{Phif} holds.
\setcounter{c}{\value{enumi}}
\end{enumerate}
Fixing $\Phi$ and $\phi$ so that \eqref{Phif} holds we moreover have
\begin{enumerate}\setcounter{enumi}{\value{c}}
\item the range of $\Phi$ is contained in $\sB=\C_0(X)$ if and only if $\phi$ is a proper map,
\item $\phi$ has dense image if and only if $\Phi$ is injective,
\item\label{std4} $\phi$ is injective if and only if $\Phi$ has strictly dense range.
\end{enumerate}
\end{theorem}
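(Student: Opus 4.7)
The strategy throughout is Gelfand duality together with the identification $\M(\C_0(X))=\C_b(X)$, equipped with the strict topology generated by the seminorms $h\mapsto\|bh\|_\infty$ for $b\in\C_0(X)$ (left and right coincide because $\sB$ is commutative).

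For (1), given continuous $\phi$, the map $f\mapsto f\comp\phi$ sends $\sA$ into $\C_b(X)=\M(\sB)$ and is manifestly a $*$-homomorphism. I would establish non-degeneracy by a compact-support argument: for $b\in\sB$ with compact support $K$, choose $f\in\C_0(Y)$ equal to $1$ on the compact set $\phi(K)$; then $\Phi(f)b=b$, and norm-density of $\C_c(X)$ in $\sB$ combined with Cohen factorization gives $\Phi(\sA)\sB=\sB$. For (2), I recover $\phi$ via characters: each $x\in X$ gives a character $\text{ev}_x$ of $\sB$, which extends uniquely to $\M(\sB)$, so $\text{ev}_x\comp\Phi$ is a homomorphism $\sA\to\CC$. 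Non-degeneracy forces it to be nonzero (pick $b\in\sB$ with $b(x)\neq 0$ and approximate $b$ as $\sum\Phi(a_i)b_i$), hence it equals $\text{ev}_{\phi(x)}$ for a unique $\phi(x)\in Y$; by construction this yields \eqref{Phif}. Continuity of $\phi$ follows because every $f\comp\phi=\Phi(f)$ is continuous and $\sA$ separates points of $Y$.

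Parts (3) and (4) are routine consequences of (1) and (2). Properness of $\phi$ gives $f\comp\phi\in\C_c(X)$ whenever $f\in\C_c(Y)$, so $\Phi(\sA)\subset\sB$ by norm density; conversely, from $\Phi(\sA)\subset\sB$ and a Urysohn function $f\in\C_0(Y)$ with $f\geq 1$ on a compact $K\subset Y$ one sees that $\phi^{-1}(K)\subset\{f\comp\phi\geq 1\}$ is compact. For (4), $\Phi$ fails to be injective iff some nonzero $f\in\C_0(Y)$ vanishes on $\phi(X)$, which by Urysohn happens precisely when $\overline{\phi(X)}\neq Y$.

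Part (5) is the subtle one. The direction ``strictly dense range implies $\phi$ injective'' is easy: if $\phi(x_1)=\phi(x_2)$ with $x_1\neq x_2$, then every element of $\Phi(\sA)$ takes the same value at $x_1$ and $x_2$, and since each evaluation $\text{ev}_{x_i}$ is strictly continuous on $\M(\sB)$ this equalization survives in the strict closure, precluding strict density. The converse carries the main work: fix $h\in\C_b(X)$, $b\in\sB$, $\varepsilon>0$, and set $K=\{|b|\geq\delta\}$ for small $\delta>0$; then $K$ is compact, and injectivity of $\phi$ together with compactness of $K$ makes $\phi|_K$ a homeomorphism onto $\phi(K)\subset Y$. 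Transporting $h|_K$ to $\phi(K)$ and extending via Tietze inside a compact neighborhood of $\phi(K)$ in $Y$ produces $f\in\C_c(Y)\subset\sA$ with $\|f\|_\infty\leq\|h\|_\infty$ and $f\comp\phi=h$ on $K$; off $K$ the bound $|b|<\delta$ controls $\|(h-f\comp\phi)b\|_\infty$. I expect this compact-homeomorphism/Tietze-extension step in (5) to be the main technical hurdle of the theorem.
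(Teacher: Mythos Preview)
Your argument is correct in all parts, including the Tietze step in (5) where you wisely extend inside a compact neighborhood of $\phi(K)$ (locally compact Hausdorff spaces need not be normal, so this detour is genuinely needed). Note, however, that the paper does not actually prove this theorem: it declares it ``a simple exercise in elementary topology'' and refers the reader to the exercises in Wegge-Olsen, so there is no proof in the paper to compare your approach against.
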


The strict topology on a multiplier algebra mentioned in Theorem \ref{std} is described e.g.~in \cite[Section 2]{gen} or \cite[Chapter 1]{lance}. The proof of the above theorem is a simple exercise in elementary topology and we leave it to the reader (see e.g.~\cite[Exercises to Chapter 2]{W-O}).

Let $\sA$ be a \cst-algebra. The dual space $\sA^*$ is naturally a module over $\sA$ and we will denote the natural left action of $a\in\sA$ on $\varphi\in\sA^*$ by $a\cdot\varphi$, so that $(a\cdot\varphi)(b)=\varphi(ba)$ for all $b\in\sA$. Note that if $\sC\subset\B(\sH)$ then $\sC$ acts in a natural way on the functionals in $\B(\sH)_*=\K(\sH)^*$ and we have
\begin{equation}\label{comMod}
\sC\cdot\K(\sH)^*=\sC\K(\sH)\cdot\K(\sH)^*=\K(\sH)\cdot\K(\sH)^*=\K(\sH)^*.
\end{equation}
(all sets above are automatically closed by the Cohen factorization theorem).

For \cst-algebras $\sA$ and $\sB$ their minimal tensor product will be denoted by $\sA\tens\sB$. For von Neumann algebras $\sM$ and $\sN$ the von Neumann algebra tensor product of $\sM$ and $\sN$ will be denoted by $\sM\vtens\sN$. The tensor flip $a\tens{b}\mapsto{b}\tens{a}$ will be denoted by $\sigma$ regardless of which \cst-algebras are being considered. We will also use the same symbol ``$\tens$'' to denote tensor product of Hilbert spaces.

In \cite{gen} S.L.~Woronowicz introduced a very important notion of a \cst-algebra generated by elements which do not necessarily belong to it. We will use a crucial part of his theory dealing with \cst-algebras ``generated by a quantum family of multipliers''. Let $\sA$ and $\sC$ be \cst-algebras and let $T\in\M(\sC\tens\sA)$. By analogy with the classical situation (when $\sC$ is commutative) the element $T$ is referred to as a \emph{quantum family of elements} of $\M(A)$ labeled by the spectrum of $\sC$ (cf.~\cite[Formula (2.5)]{gen}).

\begin{definition}[{\cite[Definition 4.1]{gen}}]\label{DefGen}
Let $\sA$ and $\sC$ be \cst-algebras. We say that $\sA$ is \emph{generated by $T\in\M(\sC\tens\sA)$} if for any Hilbert space $\sH$, any representation $\rho$ of $\sA$ on $\sH$ and any \cst-algebra $\sB\subset\B(\sH)$ the condition that $(\id\tens\rho)(T)\in\M(\sC\tens\sB)$ implies that $\rho\in\Mor(\sA,\sB)$.
\end{definition}

Examples of the situation described in Definition \ref{DefGen} are plentiful. For the simplest case consider a \cst-algebra $\sA$ generated by a finite set of elements $a_1,\dotsc,a_n\in\sA$ (in the usual sense, i.e.~the closure of the set of algebraic combinations of the elements $a_1,\dotsc,a_n$ and their adjoints coincides with $\sA$). Then $\sA$ is generated by $T\in\M(\CC^n\tens\sA)$ with
\begin{equation*}
T=\sum_{i=1}^ne_i\tens{a_i},
\end{equation*}
where $\{e_1,\dotsc,e_n\}$ is the standard basis of $\CC^n$. More complicated examples of \cst-algebras generated by quantum families of multipliers are given in \cite[Section 4]{gen}. In this paper we will be mostly interested in examples of this situation arising from representations of locally compact quantum groups to be studied in Subsection \ref{QGsub}, Section \ref{Reps} and Section \ref{Subgroups}.

\begin{remark}\label{uniquegen}
Let $\sH$ and $\sK$ be Hilbert spaces and consider \cst-algebras $\sA_1,\sA_2\subset\B(\sH)$ and $\sC\subset\B(\sK)$. Suppose that $T\in\B(\sK\tens\sH)$ is such that $T\in\M(\sC\tens\sA_1)\cap\M(\sC\tens\sA_2)$ and $T$ generates both $\sA_1$ and $\sA_2$. Then $\sA_1=\sA_2$, as the identity representation of $\sA_1$ is a morphism in $\Mor(\sA_1,\sA_2)$, and similarly the identity representation of $\sA_2$ is a morphism in $\Mor(\sA_2, \sA_1)$. This argument appeared already in \cite{gen}.
\end{remark}

Usually it is difficult to check that a given $T\in \M(\sC\tens\sA)$ generates $\sA$. For the needs of this paper it will be very useful to apply the following criterion. Note that if $T\in\M(\sC\tens\sA)$ and $\sC\subset\B(\sH)$, then each functional $\omega\in\B(\sH)_*$ defines an element of $\sC^*$, so that, in particular, $(\omega\tens\id)(T)\in\M(\sA)$ (\cite[Proposition 8.3]{lance}, \cite[Lemma A.3]{mnw}).

\begin{lemma}\label{cr}
Let $\sA$ and $\sC$ be \cst-algebras with $\sC\subset\B(\sH)$ for a Hilbert space $\sH$. Let $T\in\M(\sC\tens\sA)$ be unitary and define
\begin{equation*}
S=\bigl\{(\omega\tens\id)(T)\st\omega\in\B(\sH)_*\bigr\}\subset\M(\sA).
\end{equation*}
If $S\subset\sA$ and $S$ generates $\sA$ (as a subset of the \cst-algebra $\sA$) then $T\in\M(\sC\tens\sA)$ generates $\sA$.
\end{lemma}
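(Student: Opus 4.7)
The plan is to verify Definition~\ref{DefGen} directly. Fix a Hilbert space $\sK$, a representation $\rho\in\Mor\bigl(\sA,\K(\sK)\bigr)$ and a \cst-subalgebra $\sB\subset\B(\sK)$ such that $W:=(\id\tens\rho)(T)$ lies in $\M(\sC\tens\sB)$; one must establish that $\rho\in\Mor(\sA,\sB)$, i.e.~that $\rho(\sA)\subset\M(\sB)$ and that $\rho(\sA)\sB$ is norm-dense in $\sB$. Since $T$ is unitary and the extension of $\id\tens\rho$ to multiplier algebras is a unital $*$-homomorphism, $W$ is automatically unitary in $\M(\sC\tens\sB)$.

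The inclusion $\rho(\sA)\subset\M(\sB)$ is the easier half. For every $\omega\in\B(\sH)_*$ the element $(\omega\tens\id)(T)$ belongs to $S$ by definition, and the standard commutation of slice maps with $*$-homomorphisms gives
\begin{equation*}
\rho\bigl((\omega\tens\id)(T)\bigr)\;=\;(\omega\tens\id)\bigl((\id\tens\rho)(T)\bigr)\;=\;(\omega\tens\id)(W)\;\in\;\M(\sB),
\end{equation*}
the last containment being the slicing fact recalled just before the lemma. Thus $\rho(S)\subset\M(\sB)$, and since $\M(\sB)$ is a norm-closed \cst-subalgebra of $\B(\sK)$ while $S$ generates $\sA$ in the ordinary \cst-algebraic sense, this inclusion extends to $\rho(\sA)\subset\M(\sB)$.

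The non-degeneracy assertion $\overline{\rho(\sA)\sB}=\sB$ is where unitarity of $W$ enters essentially. Because both $W$ and $W^*$ lie in $\M(\sC\tens\sB)$, left multiplication by $W$ is a bijection of $\sC\tens\sB$ onto itself. For $c\in\sC$, $b\in\sB$ and $\omega\in\B(\sH)_*$, let $\omega_c\in\B(\sH)_*$ be defined by $\omega_c(x):=\omega(xc)$; a direct computation on elementary tensors yields the identity
\begin{equation*}
(\omega\tens\id)\bigl(W(c\tens b)\bigr)\;=\;(\omega_c\tens\id)(W)\cdot b\;=\;\rho\bigl((\omega_c\tens\id)(T)\bigr)\cdot b,
\end{equation*}
which exhibits the left-hand side as an element of $\rho(S)\sB$. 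By norm-continuity of slice maps and the equality $W(\sC\tens\sB)=\sC\tens\sB$, each $(\omega\tens\id)(z)$ with $z\in\sC\tens\sB$ is then a norm-limit of elements of $\rho(S)\sB$, hence lies in its closed linear span. Finally, for any $b\in\sB$ one can pick $c\in\sC$ and $\omega\in\B(\sH)_*$ with $\omega(c)=1$, and then $b=(\omega\tens\id)(c\tens b)$ is itself such a slice. Therefore $\sB$ is contained in the closed linear span of $\rho(\sA)\sB$, completing the argument.

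The subtle step is the displayed slice identity: it is what couples the fact that $W(c\tens b)$ lies in $\sC\tens\sB$ (because $W\in\M(\sC\tens\sB)$) with the fact that slices of $W$ by functionals from $\B(\sH)_*$ are images under $\rho$ of generators of $\sA$. Unitarity of $W$ turns the former into surjectivity, and together with the latter forces $\rho(\sA)\sB$ to be dense in $\sB$; everything else is routine continuity-and-closure bookkeeping, and the hypothesis $S\subset\sA$ (not merely $S\subset\M(\sA)$) is used exactly to make sense of ``$S$ generates $\sA$ as a subset of the \cst-algebra $\sA$''.
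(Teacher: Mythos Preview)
Your proof is correct and follows essentially the same route as the paper. Both arguments first note that $\rho(S)\subset\M(\sB)$ via the slice identity $\rho\bigl((\omega\tens\id)(T)\bigr)=(\omega\tens\id)(W)$ and then use the key computation $(\omega\tens\id)\bigl(W(c\tens b)\bigr)=\rho\bigl((c\cdot\omega\tens\id)(T)\bigr)b$ together with unitarity of $W$ to obtain non-degeneracy; the paper packages this as a single chain of equalities $\bigl(\rho(S)\sB\bigr)^{\nc}=\dotsb=\sB$ invoking \eqref{comMod}, while you spell out the same steps more discursively.
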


\begin{proof}
Let $\sK$ be a Hilbert space and let $\rho$ be a representation of $\sA$ on $\sK$ such that $(\id\tens\rho)(T)\in\M(\sC\tens\sB)$ for a certain \cst-algebra $\sB\subset\B(\sK)$. It is easily seen that $\rho(S)\subset\M(\sB)$, which implies that $\rho(\sA)\subset\M(\sB)$ because $S$ generates $\sA$. Furthermore
\begin{equation*}
\begin{split}
\bigl(\rho(S)\sB\bigr)^{\nc}
&=\,
\bigr\{\rho\bigl((c\cdot\omega\tens\id)(T)\bigr)b\st{c\in\sC,\:b\in\sB,\:\omega\in\B(\sH)_*}\bigr\}^{\nc}\\
&=\bigl\{(\omega\tens\id)\bigl((\id\tens\rho)(T)\,(c\tens{b})\bigr)\st
{c\in\sC,\:b\in\sB,\:\omega\in\B(\sH)_*}\bigr\}^{\nc}\\
&=\bigl\{(\omega\tens\id)(c\tens{b})\st
{c\in\sC,\:b\in\sB,:\omega\in\B(\sH)_*}\bigr\}^{\nc}=\sB,
\end{split}
\end{equation*}
where in the first equality we used the formula \eqref{comMod} and in the last one we used the fact that $(\id\tens\rho)(T)$ is unitary in $\M(\sC\tens\sB)$. This shows that $\rho\in\Mor(\sA,\sB)$ and ends the proof.
\end{proof}

\begin{remark}\label{order}
Sometimes it is important to use the notion of a \cst-algebra generated by a quantum family of multipliers in a different version. More precisely let $\sA$ and $\sC$ be \cst-algebras and let $T\in\M(\sA\tens\sC)$ (note the different order of tensor factors from the one in Definition \ref{DefGen}). We will say that \emph{$T\in\M(\sA\tens\sC)$ generates $\sA$} if $\sigma(T)\in\M(\sC\tens\sA)$ generates $\sA$ in the sense described in Definition \ref{DefGen}. It can happen that a given $T\in\M(\sC\tens\sA)$ generates $\sA$ and at the same time $T\in\M(\sC\tens\sA)$ generates $\sC$. Coming back to the analogy with classical situation we would say that in the first statement $T$ is a quantum family of multipliers of $\sA$ labeled by the spectrum of $\sC$ and in the second statement $T$ is a quantum family of multipliers of $\sC$ labeled by the spectrum of $\sA$.
\end{remark}

Throughout the paper we will use the so-called \emph{leg-numbering notation}. This is explained in a number of texts on quantum groups, e.g.~\cite{pw,bs}.

\subsection{Locally compact quantum groups and their universal versions}\label{QGsub}

For the theory of locally compact quantum groups we refer the reader to \cite{KV} and to \cite{mnw} for an equivalent approach with different initial axioms. Most results of this paper are true in a potentially more general setting of quantum groups defined by modular multiplicative unitaries (\cite{modmu,WorSol,MRW}), but we will stay within the theory of Kustermans and Vaes. For a locally compact quantum group $\GG$ the corresponding \cst-algebra of ``continuous functions on $\GG$ vanishing at infinity'' will be denoted by $\C_0(\GG)$. This \cst-algebra is equipped with a comultiplication $\Delta_\GG\in\Mor\bigl(\C_0(\GG),\C_0(\GG)\tens\C_0(\GG)\bigr)$. There is also the \emph{reduced bicharacter} $\ww^{\GG}\in\M\bigl(\C_0(\hh{\GG})\tens\C_0(\GG)\bigr)$
(see [17, Page 53]), where $\hh{\GG}$ denotes the \emph{dual} of $\GG$. The Haar weights provide a realization of both $\C_0(\GG)$ and $\C_0(\hh{\GG})$ on the Hilbert space $L^2(\GG)$. Then $\ww^\GG\in\B\bigl(L^2(\GG)\tens{L^2(\GG)}\bigr)$ is a \emph{multiplicative unitary} (\cite{bs}) called the \emph{Kac-Takesaki operator} of $\GG$ (\cite{mnw}). The comultiplication is then implemented by $\ww^\GG$:
\begin{equation*}
\Delta_\GG(f)=\ww^\GG(f\tens\I)(\ww^\GG)^*
\end{equation*}
for all $f\in\C_0(\GG)$ (note that we are using the conventions of \cite{bs,mu,mnw,WorSol,MRW} favoring right Haar weights over left ones). The embedding of $\C_0(\GG)$ into $\B\bigl(L^2(\GG)\bigr)$ defines also the von Neumann algebra $L^\infty(\GG)$ as $\C_0(\GG)''$. Moreover we have
\begin{equation}\label{Slices}
\C_0(\GG)=\bigl\{(\omega\tens\id)(\ww^\GG)\St\omega\in\B\bigl(L^2(\GG)\bigr)_*\bigr\}^{\nc}.
\end{equation}
In fact $\C_0(\GG)$ is generated by the quantum family $\ww^\GG\in\M\bigl(\C_0(\hh{\GG})\tens\C_0(\GG)\bigr)$ in the sense described in Definition \ref{DefGen} (\cite{mu}). Moreover the \cst-algebra $\C_0(\hh{\GG})$ is generated by quantum family $\ww^\GG\in\M\bigl(\C_0(\hh{\GG})\tens\C_0(\GG)\bigr)$ (note the difference, cf.~Remark \ref{order}).

The dense subspace
\begin{equation}\label{FAlg}
\cA_\GG=\bigl\{(\omega\tens\id)(\ww^\GG)\St\omega\in\B\bigl(L^2(\GG)\bigr)_*\bigr\}\subset\C_0(\GG)
\end{equation}
(no closure) is called the \emph{Fourier algebra} of $\GG$ (\cite[Section 8]{disser}). Note that the vector space $\cA_\GG$ is indeed a subalgebra of $\C_0(\GG)$ (\cite[Proposition 1.4]{bs}). We will identify the quotient of $\B\bigl(L^2(\GG)\bigr)_*$ by the functionals which vanish on $\C_0(\GG)$ with $L^\infty(\GG)_*$. It is clear that one can use this space of functionals instead of $\B\bigl(L^2(\GG)\bigr)_*$ in all formulas of the form \eqref{FAlg} or \eqref{Slices}.

\begin{lemma}\label{easy}
Let $\GG$ be a quantum group and let $\eta\in\C_0(\GG)^*$ be non-zero. Then $(\id\tens\eta)(\ww^\GG)\in\M\bigl(\C_0(\hh{\GG})\bigr)$ is non-zero.
\end{lemma}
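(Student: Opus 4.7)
The plan is to argue by contraposition: assume that $X := (\id\tens\eta)(\ww^\GG) = 0$ and deduce that $\eta = 0$ on the dense subalgebra $\cA_\GG \subset \C_0(\GG)$ described in \eqref{FAlg}, forcing $\eta = 0$ by norm-continuity.

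First I would pin down the definition of $X$ on the level of matrix coefficients. Since $\ww^\GG$ is a unitary in $\B\bigl(L^2(\GG)\tens L^2(\GG)\bigr)$, the operator $X \in \B(L^2(\GG))$ is characterized by
\begin{equation*}
\langle X\xi, \zeta\rangle = \eta\bigl((\omega_{\xi,\zeta}\tens\id)(\ww^\GG)\bigr)\qquad(\xi,\zeta\in L^2(\GG)),
\end{equation*}
where $\omega_{\xi,\zeta}(T) = \langle T\xi,\zeta\rangle$; the right-hand side makes sense because $(\omega_{\xi,\zeta}\tens\id)(\ww^\GG)\in\C_0(\GG)$ by \eqref{Slices}, so it lies in the domain of $\eta$. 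The assumption $X = 0$ then says that $\eta$ annihilates every element of the form $(\omega_{\xi,\zeta}\tens\id)(\ww^\GG)$.

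Next, I would bootstrap from rank-one vector functionals to arbitrary $\omega\in\B\bigl(L^2(\GG)\bigr)_*$. Every normal functional on $\B(L^2(\GG))$ is a norm limit of finite sums of such vector functionals, and the slice map $\omega\mapsto(\omega\tens\id)(\ww^\GG)$ is norm-continuous from $\B(L^2(\GG))_*$ into $\C_0(\GG)$. Combining this with the continuity of $\eta$ on $\C_0(\GG)$, the previous step extends to
\begin{equation*}
\eta\bigl((\omega\tens\id)(\ww^\GG)\bigr) = 0\qquad\text{for all } \omega\in\B\bigl(L^2(\GG)\bigr)_*.
\end{equation*}
By the defining formula \eqref{FAlg}, this means $\eta$ vanishes on the Fourier algebra $\cA_\GG$, which is (norm-)dense in $\C_0(\GG)$ by \eqref{Slices}. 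Hence $\eta = 0$, contradicting the hypothesis.

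The argument is essentially a direct application of the Fubini-type slicing identity together with the density statement \eqref{Slices}. The only point that requires a moment of care is the justification that a general (not necessarily normal) $\eta\in\C_0(\GG)^*$ interacts correctly with the right-slice $(\id\tens\eta)(\ww^\GG)\in\M\bigl(\C_0(\hh\GG)\bigr)$; this is handled by going to matrix coefficients on the left factor, where only \emph{normal} functionals $\omega_{\xi,\zeta}$ are used and the slicing is unambiguous. Beyond this bookkeeping, no serious obstacle is expected.
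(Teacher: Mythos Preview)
Your argument is correct and is essentially the contrapositive of the paper's proof, which simply picks a single $\omega\in\B\bigl(L^2(\GG)\bigr)_*$ with $\eta\bigl((\omega\tens\id)(\ww^\GG)\bigr)\neq 0$ (possible since $\cA_\GG$ is dense) and then invokes the Fubini identity $\omega\bigl((\id\tens\eta)(\ww^\GG)\bigr)=\eta\bigl((\omega\tens\id)(\ww^\GG)\bigr)$ directly. Your detour through vector functionals $\omega_{\xi,\zeta}$ followed by the density bootstrap is unnecessary---the slicing identity already holds for arbitrary $\omega\in\B\bigl(L^2(\GG)\bigr)_*$---but it does no harm.
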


\begin{proof}
If $\eta\neq{0}$ then it must be non-zero on the norm dense set $\cA_\GG$. Therefore there is a normal functional $\omega$ on $\B\bigl(L^2(\GG)\bigr)$ such that $\eta\bigl((\omega\tens\id)(\ww^\GG)\bigr)\neq{0}$. Consequently
\begin{equation*}
\omega\bigl((\id\tens\eta)(\ww^\GG)\bigr)\neq{0}
\end{equation*}
which clearly implies that $(\id\tens\eta)(\ww^\GG)\neq{0}$.
\end{proof}

The first consequence of Lemma \ref{easy} is that $\cA_\GG$ is isomorphic as a vector space to $L^\infty(\hh{\GG})_*$; in particular it is a Banach space with the norm transported from $L^\infty(\hh{\GG})_*$. Indeed this is the point of view of classical harmonic analysis (\cite{eymard}). We will view the Fourier algebra both as a Banach space and a subspace of $\C_0(\GG)$.

The universal object related to $\GG$ is a \cst-algebra which we will denote by $\C_0^\uu(\GG)$, endowed with a comultiplication $\Delta_\GG^\uu\in\Mor\bigl(\C_0^\uu(\GG),\C_0^\uu(\GG)\tens\C_0^\uu(\GG)\bigr)$. This object was introduced and analyzed in \cite{Johanuniv}. In the more general setting of quantum groups defined by modular multiplicative unitaries the universal \cst-algebra corresponding to $\GG$ is studied in \cite[Section 5]{WorSol}. The reduced bicharacter \emph{lifts} to the universal level, i.e.~we have the \emph{universal bicharacter}
\begin{equation*}
\WW^\GG\in\M\bigl(\C_0^\uu(\hh{\GG})\tens\C_0^\uu(\GG)\bigr)
\end{equation*}
(\cite[Proposition 3.8]{Johanuniv} and \cite[Proposition 4.8]{MRW}). Following the conventions of \cite{WorSol} the \emph{reducing morphisms} for $\GG$ and $\hh{\GG}$ will be denoted by $\Lambda_\GG\in\Mor\bigl(\C_0^\uu(\GG),\C_0(\GG)\bigr)$ and $\Lambda_{\hh{\GG}}\in\Mor\bigl(\C_0^\uu(\hh{\GG}),\C_0(\hh{\GG})\bigr)$ respectively (see \cite[Definition 35]{WorSol}). We have
\begin{equation}\label{down}
(\Lambda_{\hh{\GG}}\tens\Lambda_\GG)(\WW^\GG)=\ww^\GG.
\end{equation}
The elements $(\id\tens\Lambda_\GG)(\WW^\GG)$ and $(\Lambda_{\hh{\GG}}\tens\id)(\WW^\GG)$ will be denoted by
\begin{equation}\label{wWWw}
\Ww^\GG\in\M\bigl(\C_0^\uu(\hh{\GG})\tens\C_0(\GG)\bigr)\qquad\text{and}\qquad
\wW^\GG\in\M\bigl(\C_0(\hh{\GG})\tens\C_0^\uu(\GG)\bigr)
\end{equation}
respectively. We have
\begin{equation}\label{C0u}
\C_0^\uu(\GG)=\bigl\{(\omega\tens\id)(\wW^\GG)\St\omega\in\B\bigl(L^2(\GG)\bigr)_*\bigr\}^{\nc}
\end{equation}
(\cite[Formula (5.14)]{WorSol}) and consequently the \cst-algebra $\C_0^\uu(\GG)$ is generated by the quantum family $\wW^\GG\in\M\bigl(\C_0(\hh{\GG})\tens\C_0^\uu(\GG)\bigr)$ (by Lemma \ref{cr}, cf.~\cite{WorSol} and Proposition \ref{generatingAU}).

The universal dual is determined by the quantum group $\GG$ only up to isomorphism, so when $\Lambda_{\GG}$ is an isomorphism (i.e.~$\GG$ is \emph{coamenable}) then we can declare that $\C_0^\uu(\GG)=\C_0(\GG)$ and $\Lambda_\GG=\id$. Then
\begin{equation*}
\WW^\GG=\wW^\GG\quad\text{and}\quad\wW^{\hh{\GG}}=\ww^{\hh{\GG}}.
\end{equation*}
Similarly, when $\hh{\GG}$ is coamenable then
\begin{equation}\label{forHatH}
\WW^{\hh{\GG}}=\wW^{\hh{\GG}}\quad\text{and}\quad\wW^{\GG}=\ww^{\GG}.
\end{equation}
Note that quantum groups which are \emph{classical} (i.e.~quantum groups $\GG$ for which $\C_0(\GG)$ is commutative) are always coamenable.

\begin{proposition}\label{LamSli}
Let $\GG$ be a locally compact quantum group. Then the reducing map $\Lambda_{\GG}$ is injective on the subspace
\begin{equation}\label{BG}
\bigl\{(\id\tens\eta)(\Ww^{\hh{\GG}})\St\eta\in\B\bigl(L^2(\hh{\GG})\bigr)_*\bigr\}.
\end{equation}
\end{proposition}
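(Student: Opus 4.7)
The plan is to apply $\Lambda_{\GG}$ directly to a generic element of the subspace \eqref{BG} and rewrite the result as a slice of the reduced bicharacter $\ww^{\hh{\GG}}$, so that Lemma~\ref{easy} (applied to $\hh{\GG}$) becomes available. Combining this with a normality argument will force the original slice to vanish.

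First I would use $\Ww^{\hh{\GG}}=(\id\tens\Lambda_{\hh{\GG}})(\WW^{\hh{\GG}})$ together with \eqref{down} applied to $\hh{\GG}$ to obtain the identity
\begin{equation*}
(\Lambda_{\GG}\tens\id)(\Ww^{\hh{\GG}})=(\Lambda_{\GG}\tens\Lambda_{\hh{\GG}})(\WW^{\hh{\GG}})=\ww^{\hh{\GG}}\in\M\bigl(\C_0(\GG)\tens\C_0(\hh{\GG})\bigr).
\end{equation*}
Using the standard fact that slicing in one leg of a multiplier commutes with applying a morphism in the other, this yields
\begin{equation*}
\Lambda_{\GG}\bigl((\id\tens\eta)(\Ww^{\hh{\GG}})\bigr)=(\id\tens\eta)(\ww^{\hh{\GG}})\in\M\bigl(\C_0(\GG)\bigr).
\end{equation*}

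Now suppose the left-hand side vanishes. Then $(\id\tens\eta)(\ww^{\hh{\GG}})=0$, and Lemma~\ref{easy} applied to $\hh{\GG}$ forces the restriction of $\eta$ to $\C_0(\hh{\GG})$ to be zero. Since $\eta$ is a normal functional on $\B\bigl(L^2(\hh{\GG})\bigr)$ and $\C_0(\hh{\GG})$ is $\sigma$-weakly dense in its bicommutant $L^\infty(\hh{\GG})$, normality promotes this to $\eta=0$ on all of $L^\infty(\hh{\GG})$. Because $\Ww^{\hh{\GG}}\in\M\bigl(\C_0^\uu(\GG)\tens\C_0(\hh{\GG})\bigr)$ sits inside $\M\bigl(\C_0^\uu(\GG)\bigr)\vtens L^\infty(\hh{\GG})$, and the slice map $\id\tens\eta$ is $\sigma$-weakly continuous, we conclude $(\id\tens\eta)(\Ww^{\hh{\GG}})=0$. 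This proves injectivity of $\Lambda_{\GG}$ on the subspace in question.

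The main obstacle is a pedantic one rather than a conceptual one: one must carefully justify (i) that slicing commutes with $\Lambda_{\GG}\tens\id$ on elements of $\M\bigl(\C_0^\uu(\GG)\tens\C_0(\hh{\GG})\bigr)$, and (ii) that vanishing of $\eta$ on $\C_0(\hh{\GG})$ is enough to kill the slice of $\Ww^{\hh{\GG}}$, which a priori lives one level higher (at the universal level in the first leg, reduced in the second) than $\ww^{\hh{\GG}}$. Step (ii) is where the normality of $\eta\in\B\bigl(L^2(\hh{\GG})\bigr)_*$ is essential; without it one could not pass from $\C_0(\hh{\GG})$ to $L^\infty(\hh{\GG})$ and the argument would break.
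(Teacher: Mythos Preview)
Your argument is correct and follows the same route as the paper: apply $\Lambda_\GG$ to the slice, recognize the result as $(\id\tens\eta)(\ww^{\hh{\GG}})$, and invoke Lemma~\ref{easy} for $\hh{\GG}$. The paper simply records the contrapositive chain
\[
\Bigl((\id\tens\eta)(\Ww^{\hh{\GG}})\neq 0\Bigr)\;\Longrightarrow\;(\eta\neq 0)\;\Longrightarrow\;\Bigl((\id\tens\eta)(\ww^{\hh{\GG}})\neq 0\Bigr)
\]
in two lines.

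The one place you overshoot is your step (ii). Normality of $\eta$ is \emph{not} needed, and the detour through $L^\infty(\hh{\GG})$ and the object ``$\M\bigl(\C_0^\uu(\GG)\bigr)\vtens L^\infty(\hh{\GG})$'' (which is ill-formed, since $\M\bigl(\C_0^\uu(\GG)\bigr)$ need not be a von Neumann algebra) is unnecessary. The point is purely \cst-algebraic: because $\Ww^{\hh{\GG}}\in\M\bigl(\C_0^\uu(\GG)\tens\C_0(\hh{\GG})\bigr)$, the slice $(\id\tens\eta)(\Ww^{\hh{\GG}})$ depends only on the restriction of $\eta$ to $\C_0(\hh{\GG})$ (this is exactly how the paper sets up slice maps just before Lemma~\ref{cr}). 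Hence $\eta|_{\C_0(\hh{\GG})}=0$ already forces $(\id\tens\eta)(\Ww^{\hh{\GG}})=0$, with no appeal to $\sigma$-weak density or normality. Your concern that ``without normality the argument would break'' is misplaced.
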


The proof is obvious:
\begin{equation*}
\Bigl((\id\tens\eta)(\Ww^{\hh{\GG}})\neq{0}\Bigr)\quad\Longrightarrow\quad\bigl(\eta\neq{0}\bigr)
\quad\Longrightarrow\quad\Bigl((\id\tens\eta)(\ww^{\hh{\GG}})\neq{0}\Bigr)
\end{equation*}
by Lemma \ref{easy} applied to $\hh{\GG}$. Note that Proposition \ref{LamSli} can be viewed as a generalization of the very useful \cite[Proposition 3.2]{cqg}. The image of $\Lambda_\GG$ on the subspace \eqref{BG} is exactly the Fourier algebra $\cA_\GG$. It follows that $\Lambda_{\GG}\bigl(\C_0^\uu(\GG)\bigr)=\C_0(\GG)$.

The \emph{Fourier-Stieltjes algebra} of $\GG$ is the space
\begin{equation*}
\cB_\GG=
\bigl\{(\eta\tens\id)(\WW^\GG)\St\eta\in\C_0^\uu(\hh{\GG})^*\bigr\}\subset\M\bigl(\C_0^\uu(\GG)\bigr)
\end{equation*}
(see \cite[Section 8]{disser}, note that in that paper $\cB_\GG$ was embedded into $\M\bigl(\C_0(\GG)\bigr)$ and not into $\M\bigl(\C_0^\uu(\GG)\bigr)$). A reasoning analogous to that in the proof of Lemma \ref{easy} shows that $\cB_\GG$ is isomorphic as a vector space to $\C_0^\uu(\hh{\GG})^*$. Indeed, as $\bigl\{\bigl(\id\tens[\omega\comp\Lambda_\GG]\bigr)(\Ww^{\GG})\St\omega\in{L^\infty(\GG)_*}\bigr\}$
is dense in $\C_0^\uu(\hh{\GG})$ (\cite[Section 5]{WorSol}), a non-zero $\eta$ must be non-zero on some element of the form $\bigl(\id\tens[\omega\comp\Lambda_\GG]\bigr)(\Ww^{\GG})$, so $(\omega\comp\Lambda_\GG)\bigl((\eta\tens\id)(\WW^{\GG})\bigr)\neq{0}$. In particular $(\eta\tens\id)(\WW^{\GG})\neq{0}$.

In what follows we shall utilize both pictures of $\cA_\GG$ and $\cB_\GG$ --- as Banach spaces of functionals and at the same time as (non-closed) subspaces of $\C_0(\GG)$ and $\M\bigl(\C_0^\uu(\GG)\bigr)$ respectively.

A quantum group $\GG$ is \emph{compact} if the \cst-algebra $\C_0(\GG)$ is unital. In this case we write $\C(\GG)$ instead of $\C_0(\GG)$. Dually, $\GG$ is \emph{discrete} if $\hh{\GG}$ is compact. In this case $\C_0(\GG)$ is a $\cc_0$-direct sum of matrix algebras and we write $\cc_0(\GG)$ instead of $\C_0(\GG)$. We also write in this case $\ell^\infty(\GG)$ for $L^\infty(\GG)$. Discrete quantum groups are always coamenable (\cite{pw}). We refer to \cite{cqg} for the complete account of the theory of compact quantum groups and to \cite[Section 3]{pw} for a thorough treatment of discrete quantum groups.

Finally let us mention that on the level of bicharacters the duality between $\GG$ and $\hh{\GG}$ is implemented by the tensor flip and the adjoint operation:
\begin{equation}\label{DualBigW}
\WW^{\hh{\GG}}=\sigma({\WW^\GG})^*.
\end{equation}
It follows that
\begin{equation*}
\wW^{\hh{\GG}}=\sigma({\Ww^\GG})^*\qquad\text{and}\qquad
\ww^{\hh{\GG}}=\sigma({\ww^\GG})^*.
\end{equation*}

\subsection{Homomorphisms of locally compact quantum groups}\label{morphism}

Let $\GG$ and $\HH$ be locally compact quantum groups. In \cite{MRW} it is shown that the following three classes of objects are in a one-to-one correspondence:
\begin{enumerate}
\item\emph{strong quantum homomorphisms}: morphisms
\begin{equation*}
\pi\in\Mor\bigl(\C_0^\uu(\GG),\C_0^\uu(\HH)\bigr)
\end{equation*}
such that
\begin{equation*}
(\pi\tens\pi)\comp\Delta_\GG^\uu=\Delta_\HH^\uu\comp\pi;
\end{equation*}
\item\emph{bicharacters (from $\HH$ to $\GG$)}: unitaries
\begin{equation*}
V\in\M\bigl(\C_0(\hh{\GG})\tens\C_0(\HH)\bigr)
\end{equation*}
such that
\begin{equation}\label{DelDel}
\begin{split}
(\Delta_{\hh{\GG}}\tens\id_{\C_0(\HH)})(V)&=V_{23}V_{13},\\
(\id_{\C_0(\hh{\GG})}\tens\Delta_{\HH})(V)&=V_{12}V_{13}.
\end{split}
\end{equation}
\item\emph{right quantum homomorphisms}: morphisms
\begin{equation*}
\rho\in\Mor\bigl(\C_0(\GG),\C_0(\GG)\tens\C_0(\HH)\bigr)
\end{equation*}
such that
\begin{equation*}
\begin{split}
(\Delta_{\GG}\tens\id)\comp\rho&=(\id\tens\rho)\comp\Delta_{\GG},\\
(\id\tens\Delta_{\HH})\comp\rho&=(\rho\tens\id)\comp\rho.
\end{split}
\end{equation*}
\end{enumerate}

All these should be thought of as alternative descriptions of a fixed homomorphism from $\HH$ to $\GG$. Note that the reduced bicharacter $\ww^\GG$ of $\GG$ introduced in Subsection \ref{QGsub} is a bicharacter from $\GG$ to $\GG$ in the above sense and describes the identity homomorphism. Sometimes, to simplify the language, we will refer to a strong quantum homomorphism $\pi$ as above as a homomorphism from $\HH$ to $\GG$. A strong quantum homomorphism $\pi$ is related to the bicharacter $V$ via the formula
\begin{equation}\label{Vdef}
V=\bigl(\Lambda_{\hh{\GG}}\tens[\Lambda_{\HH}\comp\pi]\bigr)(\WW^{\GG}),
\end{equation}
while the right quantum homomorphism $\rho$ is given by
\begin{equation*}
\rho(x)=V(x\tens\I_{\C_0(\HH)})V^*
\end{equation*}
for any $x\in\C_0(\GG)\subset\B\bigl(L^2(\GG)\bigr)$.

One can also check (see \cite[Lemma 3.4]{MRW}) that for a unitary  $V\in\M\bigl(\C_0(\hh{\GG})\tens\C_0(\HH)\bigr)$ the conditions \eqref{DelDel} are equivalent to the following ``twisted'' pentagonal equations:
\begin{subequations}
\begin{align}
V_{23}\ww^{\GG}_{12}&=\ww^{\GG}_{12}V_{13}V_{23},&\text{in}\quad&
\M\bigl(\C_0(\hh{\GG})\tens\K\bigl(L^2(\GG)\bigr)\tens\C_0(\HH)\bigr),\label{VG}\\
\ww^{\HH}_{23}V_{12}&=V_{12}V_{13}\ww^{\HH}_{23},&\text{in}\quad&
\M\bigl(\C_0(\hh{\GG})\tens\K\bigl(L^2(\HH)\bigr)\tens\C_0(\HH)\bigr).\label{VH}
\end{align}
\end{subequations}

The next result, namely \cite[Proposition 3.14]{MRW}, describes in the simplest way the construction of the \emph{dual homomorphisms} (cf.~\eqref{DelDel}).

\begin{proposition}\label{dualprop}
If $V$ is a bicharacter from $\HH$ to $\GG$, the unitary $\hh{V}=\sigma(V^*)\in\M\bigl(\C_0(\HH)\tens\C_0(\hh{\GG})\bigr)$ is a bicharacter from $\hh{\GG}$ to $\hh{\HH}$.
\end{proposition}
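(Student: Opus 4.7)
The statement asserts that $\hh{V} = \sigma(V^*)$ is a bicharacter from $\hh{\GG}$ to $\hh{\HH}$. Using $\hh{\hh{\HH}} = \HH$, one has $\hh{V} \in \M\bigl(\C_0(\HH) \tens \C_0(\hh{\GG})\bigr) = \M\bigl(\C_0(\hh{\hh{\HH}}) \tens \C_0(\hh{\GG})\bigr)$, which is the correct ambient algebra, and unitarity is immediate because both $\sigma$ and the $*$-operation preserve unitaries. So everything reduces to verifying the two bicharacter identities \eqref{DelDel} in the form required of $\hh{V}$:
\[
(\Delta_{\HH} \tens \id)(\hh{V}) = \hh{V}_{23} \hh{V}_{13}, \qquad (\id \tens \Delta_{\hh{\GG}})(\hh{V}) = \hh{V}_{12} \hh{V}_{13}.
\]

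My plan is to derive each of these by applying the $*$-operation to the corresponding identity in \eqref{DelDel} (legal because $\Delta_{\HH}$ and $\Delta_{\hh{\GG}}$ are $*$-homomorphisms) and then applying a cyclic permutation of the three tensor legs which moves the $\C_0(\hh{\GG})$-factor from where it sits in \eqref{DelDel} to the position demanded by $\hh{V}$. Concretely, for the first identity I would start from $(\id \tens \Delta_{\HH})(V) = V_{12}V_{13}$, adjoint to $(\id \tens \Delta_{\HH})(V^*) = V_{13}^* V_{12}^*$ in $\M\bigl(\C_0(\hh{\GG}) \tens \C_0(\HH) \tens \C_0(\HH)\bigr)$, and then apply the left cyclic shift $\tau\colon x \tens y \tens z \mapsto y \tens z \tens x$; the left-hand side reassembles as $(\Delta_{\HH} \tens \id)\bigl(\sigma(V^*)\bigr) = (\Delta_{\HH} \tens \id)(\hh{V})$, while $\tau$ carries the factors $V_{13}^*$ and $V_{12}^*$ to $\hh{V}_{23}$ and $\hh{V}_{13}$ respectively, yielding the product in the correct order on the right.

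For the second identity I would repeat the argument starting from $(\Delta_{\hh{\GG}} \tens \id)(V) = V_{23}V_{13}$, applying instead the opposite cyclic shift $\tau'\colon x \tens y \tens z \mapsto z \tens x \tens y$ on $\M\bigl(\C_0(\hh{\GG}) \tens \C_0(\hh{\GG}) \tens \C_0(\HH)\bigr)$; this time the left-hand side reassembles as $(\id \tens \Delta_{\hh{\GG}})(\hh{V})$, and $\tau'$ sends $V_{13}^*$, $V_{23}^*$ to $\hh{V}_{12}$, $\hh{V}_{13}$, producing $\hh{V}_{12} \hh{V}_{13}$. No genuine obstacle arises --- the whole argument is bookkeeping of leg positions under $\sigma$ and the cyclic rotations, and the only substantive input is that both comultiplications are $*$-preserving. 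An equally short alternative route is to translate \eqref{DelDel} into the twisted pentagonal form \eqref{VG}--\eqref{VH} and invoke the duality relation \eqref{DualBigW} directly, from which the bicharacter identities for $\hh{V}$ drop out by inspection.
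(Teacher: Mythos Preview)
Your argument is correct: the two bicharacter identities for $\hh{V}$ follow exactly as you describe, by taking adjoints in \eqref{DelDel} and then conjugating with the appropriate cyclic permutation of the three tensor factors, and your leg-bookkeeping for $\tau$ and $\tau'$ checks out. Note that the paper does not actually supply its own proof of this proposition --- it simply cites \cite[Proposition~3.14]{MRW} --- so there is nothing to compare against beyond observing that your direct verification is precisely the elementary computation one would expect.
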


Proposition \ref{dualprop} makes possible the following definition:

\begin{definition}
Let $\pi$ be a morphism from $\HH$ to $\GG$ with corresponding bicharacter $V$. Then the strong quantum homomorphism defined by $\hh{V}$ is called the \emph{dual} of $\pi$ and will be denoted by $\hh{\pi}$, so that
$\hh{\pi}\in\Mor\bigl(\C_0^\uu(\hh{\HH}),\C_0^\uu(\hh{\GG})\bigr)$.
\end{definition}

Let us note the most fundamental equality relating $\pi$ to $\hh{\pi}$ (and determining $\hh{\pi}$ uniquely) contained in \cite[Theorem 4.15]{MRW}:
\begin{equation}\label{basic}
(\id\tens\pi)(\WW^{\GG})=(\hh{\pi}\tens\id)(\WW^{\HH}).
\end{equation}
By applying $(\Lambda_{\hh{\GG}}\tens\Lambda_{\HH})$ to both sides and using \eqref{Vdef} we obtain
\begin{equation}\label{this}
\bigl(\Lambda_{\hh{\GG}}\tens[\Lambda_{\HH}\comp\pi]\bigr)(\WW^{\GG})=V
=\bigl([\Lambda_{\hh{\GG}}\comp\hh{\pi}]\tens\Lambda_{\HH}\bigr)(\WW^{\HH}).
\end{equation}
Moreover if $\pi_1$ and $\pi_2$ are strong quantum homomorphisms associated with homomorphisms from $\GG_1$ to $\GG_2$ and from $\GG_2$ to $\GG_3$ respectively then
\begin{equation}\label{pipi}
\hh{\pi_2}\comp\hh{\pi_1}=\hh{\pi_1\comp\pi_2},
\end{equation}
since \eqref{basic} characterizes the dual strong quantum homomorphism. Thus if $\pi$ is an isomorphism of \cst-algebras then so is $\hh{\pi}$.

\begin{theorem}\label{isoT}
Let $\GG$ and $\HH$ be locally compact quantum groups. Consider a homomorphism from $\HH$ to $\GG$ such that the corresponding $\pi\in\Mor\bigl(\C_0^\uu(\GG),\C_0^\uu(\HH)\bigr)$ is an isomorphism, i.e.~$\pi$ is a one-to-one map from $\C_0^\uu(\GG)$ onto $\C_0^\uu(\HH)$. Then there exists an isomorphism $\pir$ of $\C_0(\GG)$ onto $\C_0(\HH)$ such that $\pir\comp\Lambda_{\GG}=\Lambda_\HH\comp\pi$.
\end{theorem}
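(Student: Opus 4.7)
The plan is to show that the isomorphism $\pi$ descends to the reduced level, i.e.~that the closed two-sided ideals $\ker\Lambda_\GG$ and $\ker(\Lambda_\HH\comp\pi)=\pi^{-1}(\ker\Lambda_\HH)$ of $\C_0^\uu(\GG)$ coincide. Because \eqref{pipi} shows that $\hh{\pi^{-1}}=(\hh{\pi})^{-1}$, the isomorphism hypothesis is inherited by $\pi^{-1}$, so it suffices to establish one inclusion, say $\pi(\ker\Lambda_\GG)\subseteq\ker\Lambda_\HH$; the reverse will then follow by applying the same argument to $\pi^{-1}$.

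The key auxiliary object is the bicharacter $V=(\Lambda_{\hh{\GG}}\tens[\Lambda_\HH\comp\pi])(\WW^\GG)\in\M\bigl(\C_0(\hh{\GG})\tens\C_0(\HH)\bigr)$, which by \eqref{this} also equals $([\Lambda_{\hh{\GG}}\comp\hh{\pi}]\tens\Lambda_\HH)(\WW^\HH)$. My first step is to define $\pir$ on the Fourier algebra $\cA_\GG$ by the formula
\[
\pir\bigl((\omega\tens\id)(\ww^\GG)\bigr)=(\omega\tens\id)(V),\qquad\omega\in\B\bigl(L^2(\GG)\bigr)_*,
\]
and to check well-definedness via the identification $\cA_\GG\cong L^\infty(\hh{\GG})_*$ noted after Lemma \ref{easy}: a vanishing $(\omega\tens\id)(\ww^\GG)$ forces $\omega|_{\C_0(\hh{\GG})}=0$, whence by normality of $\omega$ and weak-$*$ density of $\C_0(\hh{\GG})$ in $L^\infty(\hh{\GG})$ also $\omega|_{L^\infty(\hh{\GG})}=0$; since $V\in L^\infty(\hh{\GG})\vtens L^\infty(\HH)$, this yields $(\omega\tens\id)(V)=0$.

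To promote $\pir$ to the whole of $\C_0(\GG)$ I would observe that for $y=(\omega\tens\id)(\wW^\GG)\in\C_0^\uu(\GG)$ one has $\Lambda_\GG(y)=(\omega\tens\id)(\ww^\GG)$ and $(\Lambda_\HH\comp\pi)(y)=(\omega\tens\id)(V)$, so $\pir$ is the map induced by the $\cst$-morphism $\Lambda_\HH\comp\pi$ on the dense set of slices of $\wW^\GG$. Combined with the bicharacter identities \eqref{DelDel} enjoyed by both $\ww^\GG$ and $V$, this shows that $\pir$ is multiplicative and $*$-preserving on $\cA_\GG$. The generation property of $\ww^\GG$ (Definition \ref{DefGen}) together with the matching equality $(\id\tens\pir)(\ww^\GG)=V\in\M\bigl(\C_0(\hh{\GG})\tens\C_0(\HH)\bigr)$ then upgrades $\pir$ to a morphism in $\Mor\bigl(\C_0(\GG),\C_0(\HH)\bigr)$ satisfying $\pir\comp\Lambda_\GG=\Lambda_\HH\comp\pi$. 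Running the parallel construction with $\pi^{-1}$ in place of $\pi$ furnishes a two-sided inverse, so $\pir$ is an isomorphism.

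The principal obstacle is the passage from the $*$-homomorphism on the non-closed subalgebra $\cA_\GG$ to a norm-bounded morphism on $\C_0(\GG)$; equivalently, showing that the $\cst$-seminorm on $\C_0^\uu(\GG)$ induced by $\Lambda_\HH\comp\pi$ coincides with the one induced by $\Lambda_\GG$. The generation-theoretic argument sketched above, combined with the symmetric construction applied to $\pi^{-1}$, is the mechanism by which one deduces this equality directly from the quantum-group-theoretic data, avoiding any appeal to Haar weights or additional structure.
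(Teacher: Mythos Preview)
Your outline correctly reduces the problem to the inclusion $\ker\Lambda_\GG\subseteq\ker(\Lambda_\HH\comp\pi)$ and correctly identifies the extension from $\cA_\GG$ to $\C_0(\GG)$ as the crux. However, the step where you invoke the generation property of $\ww^\GG$ is circular: Definition~\ref{DefGen} requires $\pir$ to already be a \emph{representation} of $\C_0(\GG)$ on a Hilbert space before one can conclude $\pir\in\Mor\bigl(\C_0(\GG),\C_0(\HH)\bigr)$ from $(\id\tens\pir)(\ww^\GG)\in\M\bigl(\C_0(\hh{\GG})\tens\C_0(\HH)\bigr)$. You only have $\pir$ on the dense, non-closed $*$-subalgebra $\cA_\GG$, so $(\id\tens\pir)(\ww^\GG)$ is not yet a well-defined operator, and the generation hypothesis cannot be applied. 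Nor does the symmetric construction for $\pi^{-1}$ break this circle: it produces a map $\cA_\HH\to\C_0(\GG)$ with the same defect, and having two unbounded partial inverses does not give boundedness of either. So the ``mechanism'' you allude to in the last paragraph is not actually supplied.

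The paper closes exactly this gap by a different route. It passes to the dual bicharacter $\hh{V}\in\M\bigl(\C_0(\HH)\tens\C_0(\hh{\GG})\bigr)$ and, using that $\hh{\pi}$ is an isomorphism, shows that $\hh{V}$ is \emph{right-absorbing} as a representation of $\hh{\GG}$: every representation $Y$ of $\hh{\GG}$ is obtained as $\bigl(\id\tens[\Lambda_{\hh{\GG}}\comp\hh{\pi}]\bigr)(X)$ for an $X$ built via $\hh{\pi}^{-1}$, and a short computation gives $Y\tp\hh{V}\cong\II\tp\hh{V}$. By the results of \cite{WorSol} this forces $\hh{V}$ to be quasi-equivalent to $\ww^{\hh{\GG}}$; Theorem~\ref{Vaes-equiv} then says $\hh{\GG}$ is Vaes-closed in $\hh{\HH}$, and Theorem~\ref{VW1} (with the comment following it) yields a genuine morphism $\hh{\gamma}_1\in\Mor\bigl(\C_0(\GG),\C_0(\HH)\bigr)$ with $\hh{\gamma}_1\comp\Lambda_\GG=\Lambda_\HH\comp\pi$. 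Running the same with $\pi^{-1}$ gives the inverse. In other words, the boundedness you need is obtained not from a generation/slice argument but from the quasi-equivalence theory of Section~\ref{Reps}, which is precisely what produces a normal $*$-isomorphism at the von Neumann level and hence a bounded map at the reduced \cst-level.
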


Theorem \ref{isoT} says that isomorphisms in the category of locally compact quantum groups considered in \cite{MRW} drop down to \cst-algebraic isomorphisms of the reduced level. In what follows we will refer to this situation by simply saying that $\GG$ and $\HH$ are isomorphic. A proof of this result may be given along the lines of \cite[Proposition 8.7]{Johanuniv} (cf.~also \cite[Proposition 7.1]{Johanuniv}). In Section \ref{Subgroups} we will give a short proof of Theorem \ref{isoT} using representation theory of locally compact quantum groups and techniques developed in this paper. Let us note that these techniques make no use of the existence of Haar weights and are equally applicable to quantum groups arising from modular multiplicative unitaries.

\section{Representations of locally compact quantum groups}\label{Reps}

In this section we recall some basic notions of the representation theory of locally compact quantum groups (\cite{bs}, \cite[Section 3]{WorSol}) and establish alternative characterizations of quasi-equivalence of two representations of a given quantum group (Theorem \ref{QE = Fourier}).

Let $\GG$ be a locally compact quantum group and let $\sH$ be a Hilbert space. A \emph{strongly continuous unitary representation} of $\GG$ on $\sH$ is a unitary element $U\in\M\bigl(\K(\sH)\tens\C_0(\GG)\bigr)$ such that
\begin{equation*}
(\id\tens\Delta_\GG)(U)=U_{12}U_{13}.
\end{equation*}
We will usually write simply of ``representations of $\GG$''. Moreover the Hilbert space $\sH$ will be usually decorated by the subscript $U$, so that $U\in\M\bigl(\K(\sH_U)\tens\C_0(\GG)\bigr)$.

For such a representation $U$ of $\GG$ the subspace
\begin{equation*}
\sA_U=\bigl\{(\id\tens\omega)(U)\st\omega\in{L^\infty(\GG)_*}\bigr\}^\nc
\end{equation*}
is a non-degenerate \cst-subalgebra of $\B(\sH)$ (it was denoted ``$B_U$'' in \cite{WorSol}). In fact $U$ is a multiplier of $\sA_U\tens\C_0(\GG)$ and the quantum family $U\in\M\bigl(\sA_U\tens\C_0(\GG)\bigr)$ generates $\sA_U$ (\cite{mu,WorSol}).

We will also use at some point the notation
\begin{equation*}
\cA_U=\bigl\{(\id\tens\omega)(U)\st\omega\in{L^\infty(\GG)_*}\bigr\}.
\end{equation*}
It is easy to check that $\cA_U$ is an algebra --- this is a quantum group analogue of the ``Fourier space of a representation'' defined in \cite[Definition (2.1)]{arsac}; note for example that $\cA_{\ww^{\GG}}$ is the Fourier algebra $\cA_{\hh{\GG}}$ of $\hh{\GG}$. Observe further that a bicharacter from $\HH$ to $\GG$ is a representation of $\HH$ on $L^2(\GG)$ and it follows from Proposition \ref{dualprop} that $\hh{V}$ is a representation of $\hh{\GG}$ on $L^2(\HH)$.

The generating property for representations can be reformulated in terms of their slices. In the following proposition note the use of the notion of a \cst-algebra generated by a quantum family of multipliers in the version described in Remark \ref{order}.

\begin{proposition} \label{generatingAU}
Let $U$ be a representation of $\GG$ on a Hilbert space $\sH$ and let $\sA$ be a non-degenerate \cst-subalgebra of $\B(\sH)$. Assume that $U\in\M\bigl(\sA\tens\C_0(\GG)\bigr)$. Then the following are equivalent:
\begin{enumerate}
\item $U\in\M\bigl(\sA\tens\C_0(\GG)\bigr)$ generates $\sA$;
\item $\sA=\sA_U$.
\end{enumerate}
\end{proposition}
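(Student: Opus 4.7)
The plan is to prove the two implications independently, leaning on Remark \ref{uniquegen} for the direction (1)$\Rightarrow$(2) and on Lemma \ref{cr} for (2)$\Rightarrow$(1), together with the facts about $\sA_U$ recalled in the paragraph just above the proposition statement: namely that $\sA_U$ is a non-degenerate \cst-subalgebra of $\B(\sH)$, that $U\in\M(\sA_U\tens\C_0(\GG))$, and that this multiplier generates $\sA_U$.

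For (2)$\Rightarrow$(1), I would apply Lemma \ref{cr} to $T=\sigma(U)\in\M(\C_0(\GG)\tens\sA)$, with $L^2(\GG)$ playing the role of $\sH$ in that lemma. The slice set there becomes $\{(\id\tens\omega)(U)\st\omega\in\B(L^2(\GG))_*\}$, which coincides with $\cA_U$: a slice depends only on the restriction of $\omega$ to $L^\infty(\GG)$ (since $U\in\M(\sA\tens\C_0(\GG))$), and every normal functional on $L^\infty(\GG)$ arises from a normal functional on $\B(L^2(\GG))$. The assumption (2) says this linear subspace is norm-dense in $\sA$, so it certainly generates $\sA$ as a \cst-subalgebra, and Lemma \ref{cr} then yields the generation of $\sA$ by $\sigma(U)$ in the sense of Definition \ref{DefGen}, which, unwrapped via Remark \ref{order}, is exactly (1). (Alternatively, this implication reads off directly from the quoted fact that $U\in\M(\sA_U\tens\C_0(\GG))$ generates $\sA_U$, after substituting $\sA$ for $\sA_U$ using the hypothesis.)

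For (1)$\Rightarrow$(2), the hypothesis says that $\sigma(U)\in\M(\C_0(\GG)\tens\sA)$ generates $\sA$, while the standard fact from \cite{mu,WorSol} recorded just above the proposition guarantees $\sigma(U)\in\M(\C_0(\GG)\tens\sA_U)$ and that it generates $\sA_U$. Both $\sA$ and $\sA_U$ are non-degenerate \cst-subalgebras of $\B(\sH)$, so Remark \ref{uniquegen} applies verbatim with $\sC=\C_0(\GG)$, $\sA_1=\sA$, $\sA_2=\sA_U$ and $T=\sigma(U)$, forcing $\sA=\sA_U$.

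There is no genuine obstacle: the ``universality'' statement for $\sA_U$ from \cite{mu,WorSol} does all the heavy lifting. The only point to keep straight is the switch between the tensor-factor order of Definition \ref{DefGen} (on which Lemma \ref{cr} and Remark \ref{uniquegen} are formulated) and the order of Remark \ref{order} (adopted in the proposition), which is just the cosmetic flip by $\sigma$.
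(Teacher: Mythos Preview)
Your proof is correct and essentially the same as the paper's: both directions follow immediately from the fact (recalled just above the proposition) that $U\in\M(\sA_U\tens\C_0(\GG))$ generates $\sA_U$, together with Remark~\ref{uniquegen}. Your detour through Lemma~\ref{cr} for (2)$\Rightarrow$(1) is correct but superfluous, as you yourself note in the parenthetical alternative.
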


\begin{proof}
A direct consequence of the fact that $U$ generates $\sA_U$ and Remark \ref{uniquegen} (cf.~\cite[Subsection 3.5]{WorSol}).
\end{proof}

The standard notions of representation theory were all collected in \cite[Section 3]{WorSol}.
\begin{itemize}
\item Two representations $U$ and $V$ of $\GG$ are \emph{equivalent} if there exists a unitary operator $T\in\B(\sH_U,\sH_V)$ such that
\begin{equation*}
(T\tens\I)U=V(T\tens\I).
\end{equation*}
\item If $\sH$ is a Hilbert space then the \emph{trivial representation} of $\GG$ on $\sH$ is
\begin{equation*}
\II_{\sH}=\I_{\B(\sH)}\tens\I_{\C_0(\GG)}\in\M\bigl(\K(\sH)\tens\C_0(\GG)\bigr).
\end{equation*}
\item The \emph{tensor product} of two representations $U$ and $V$ is the representation
\begin{equation*}
U\tp{V}\in\M\bigl(\K(\sH_U\tens\sH_V)\tens\C_0(\GG)\bigr)
\end{equation*}
defined by
\begin{equation*}
U\tp{V}=U_{13}V_{23}.
\end{equation*}
\item Representation $U$ and $V$ are \emph{quasi-equivalent} if there exists a Hilbert space $\sH$ such that $\II_\sH\tp{U}$ and $\II_\sH\tp{V}$ are equivalent (\cite[Proposition 13]{WorSol}).
\end{itemize}

The following theorem will be crucial in the next section, when we analyze a definition of a closed quantum subgroup proposed in \cite{Vaes}. The implication \eqref{QEF1}$\Rightarrow$\eqref{QEF2}
is \cite[Corollary 15]{WorSol}.

\begin{theorem}\label{QE = Fourier}
Let $U$ and $V$ be representations of $\GG$ on $\sH_U$ and $\sH_V$ respectively. The following three conditions are equivalent:
\begin{enumerate}
\item\label{QEF1} $U$ is quasi-equivalent to $V$;
\item\label{QEF2} there exists a (necessarily unique) normal $*$-isomorphism $\gamma\colon\sA_U''\to\sA_V''$ such that
\begin{equation*}
(\gamma\tens\id)(U)=V;
\end{equation*}
\item\label{QEF3} we have
\begin{equation}\label{iniii}
\bigl\{(\eta\tens\id_{\B(L^2(\GG))})(U)\st\eta\in\B(\sH_U)_*\bigr\}
=\bigl\{(\mu\tens\id_{\B(L^2(\GG))})(V)\st\mu\in\B(\sH_V)_*\bigr\}.
\end{equation}
\end{enumerate}
\end{theorem}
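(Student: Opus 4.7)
The plan is to close the cycle (1)$\Rightarrow$(2)$\Rightarrow$(3)$\Rightarrow$(2)$\Rightarrow$(1), with (1)$\Rightarrow$(2) cited. The pivot is to translate condition (3) --- formulated on the reduced level inside $\B\bigl(L^2(\GG)\bigr)$ --- into equality of coefficient spaces of the \cst-algebra representations of $\C_0^\uu(\hh{\GG})$ canonically attached to $U$ and $V$, after which the classical Fell-type characterization of quasi-equivalence of \cst-representations does the work.

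For (2)$\Rightarrow$(3), take a normal $*$-isomorphism $\gamma\colon\sA_U''\to\sA_V''$ with $(\gamma\tens\id)(U)=V$. Given $\mu\in\B(\sH_V)_*$, the normal functional $\mu|_{\sA_V''}\comp\gamma$ on $\sA_U''$ lifts, by surjectivity of the canonical map $\B(\sH_U)_*\twoheadrightarrow(\sA_U'')_*$, to some $\eta\in\B(\sH_U)_*$. Since $U\in\sA_U''\vtens L^\infty(\GG)$, slicing depends only on restrictions to $\sA_U''$, and one computes
\[
(\eta\tens\id)(U)=\bigl((\mu|_{\sA_V''}\comp\gamma)\tens\id\bigr)(U)=(\mu\tens\id)\bigl((\gamma\tens\id)(U)\bigr)=(\mu\tens\id)(V).
\]
The reverse inclusion in \eqref{iniii} is obtained by the same argument applied to $\gamma^{-1}$.

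For (3)$\Rightarrow$(2), invoke the correspondence from \cite{WorSol} between representations $U$ of $\GG$ on $\sH_U$ and non-degenerate morphisms $\phi_U\in\Mor\bigl(\C_0^\uu(\hh{\GG}),\K(\sH_U)\bigr)$ given by $U=(\phi_U\tens\id)(\Ww^\GG)$; then $\sA_U$ coincides with the image of $\phi_U$. One gets
\[
(\eta\tens\id)(U)=\bigl((\eta\comp\phi_U)\tens\id\bigr)(\Ww^\GG)\qquad(\eta\in\B(\sH_U)_*).
\]
A key sub-lemma is that the map $\xi\mapsto(\xi\tens\id)(\Ww^\GG)$ is injective on $\C_0^\uu(\hh{\GG})^*$: if the slice vanishes, then testing against $\omega\in L^\infty(\GG)_*$ yields $\xi\bigl((\id\tens\omega)(\Ww^\GG)\bigr)=0$, and by \eqref{C0u} applied to $\hh{\GG}$ combined with the duality \eqref{DualBigW} the elements $(\id\tens\omega)(\Ww^\GG)$ lie in and span a norm-dense subspace of $\C_0^\uu(\hh{\GG})$, forcing $\xi=0$. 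Consequently condition (3) is equivalent to
\[
\bigl\{\eta\comp\phi_U\st\eta\in\B(\sH_U)_*\bigr\}=\bigl\{\mu\comp\phi_V\st\mu\in\B(\sH_V)_*\bigr\}
\]
inside $\C_0^\uu(\hh{\GG})^*$. The classical coefficient-space characterization of quasi-equivalence for representations of \cst-algebras (Fell's theorem) then produces a normal $*$-isomorphism $\gamma\colon\sA_U''\to\sA_V''$ with $\gamma\comp\phi_U=\phi_V$, and applying $\gamma\tens\id$ to $U=(\phi_U\tens\id)(\Ww^\GG)$ gives $(\gamma\tens\id)(U)=V$. Uniqueness of $\gamma$ is immediate from normality and the weak-$*$-density of $\cA_U$ in $\sA_U''$.

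Finally, (2)$\Rightarrow$(1) is a standard amplification: any normal $*$-isomorphism of von Neumann algebras becomes spatial after tensoring with $\B(\sK)$ for a sufficiently large Hilbert space $\sK$, yielding a unitary $T\colon\sK\tens\sH_U\to\sK\tens\sH_V$ that implements $\id_{\B(\sK)}\vtens\gamma$; a direct calculation gives $(T\tens\I)(\II_\sK\tp U)=(\II_\sK\tp V)(T\tens\I)$, so $\II_\sK\tp U$ and $\II_\sK\tp V$ are equivalent, establishing quasi-equivalence of $U$ and $V$. The principal technical obstacle is the injectivity statement in the (3)$\Rightarrow$(2) step: without it one cannot lift the reduced-level equality (3) to an equality inside $\C_0^\uu(\hh{\GG})^*$ and invoke Fell's characterization.
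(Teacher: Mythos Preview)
Your argument is correct, and for the implications $(1)\Leftrightarrow(2)$ and $(2)\Rightarrow(3)$ it matches the paper. The genuine difference is in $(3)\Rightarrow(2)$.

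The paper proves $(3)\Rightarrow(2)$ by a direct, self-contained construction: it uses the extended antipode $\tilde\kappa$ to single out a weakly dense $*$-subalgebra $\cA_U^X\subset\sA_U''$ on which a map $\gamma_0\bigl((\id\tens\omega)(U)\bigr)=(\id\tens\omega)(V)$ is well defined, and then extends $\gamma_0$ to all of $\sA_U''$ via Kaplansky density and a Banach--Steinhaus argument, checking multiplicativity by hand. Your route instead passes through the universal dual: using $U=(\phi_U\tens\id)(\Ww^\GG)$ and the injectivity of $\xi\mapsto(\xi\tens\id)(\Ww^\GG)$ on $\C_0^\uu(\hh\GG)^*$, you translate condition (3) into equality of coefficient spaces of the \cst-representations $\phi_U,\phi_V$ of $\C_0^\uu(\hh\GG)$, and then invoke the classical Fell characterization of quasi-equivalence. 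This is shorter and conceptually transparent --- it makes explicit that quasi-equivalence of quantum group representations is nothing other than quasi-equivalence of the associated $\C_0^\uu(\hh\GG)$-representations --- at the cost of importing the Fell theorem as a black box. The paper's argument, by contrast, is more laborious but stays entirely within the multiplicative-unitary framework and exhibits $\gamma$ constructively.
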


\begin{proof}
\eqref{QEF1}$\Rightarrow$\eqref{QEF2}. Suppose that $U$ and $V$ are quasi-equivalent. Let $\sK$ be a Hilbert space and let $T\colon\sK\tens\sH_U\to\sK\tens\sH_V$ be a unitary such that
\begin{equation}\label{qereg}
T_{12}U_{23}T_{12}^*=V_{23}.
\end{equation}
Take $\omega\in\B\bigl(L^2(\GG)\bigr)_*$ and put $x=(\id\tens\omega)(U)$ and $y=(\id\tens\omega)(V)$. Equation \eqref{qereg} shows that  $T(\I_\sK\tens{x})T^*=\I_\sK\tens{y}$. This implies that
$T(\I_{\sK}\tens\sA_U'')T^*\subset{\I_{\sK}\vtens\sA_V''}$. Applying a similar argument in the converse direction we observe that actually $T(\I_{\sK}\tens\sA_U'')T^*=\I_{\sK}\vtens\sA_V''$ so there exists
a normal $*$-isomorphism $\gamma\colon\sA_U''\to\sA_V''$ such that $\I_\sK\tens\gamma(x)=T(\I_\sK\tens{x})T^*$ for all $x\in\sA_U''$. Using equation \eqref{qereg} we see that $(\gamma\tens\id)(U)=V$.

\eqref{QEF2}$\Rightarrow$\eqref{QEF1}. Let $\gamma:\sA_U''\to\sA_V''$ be an normal $*$-isomorphism such that $(\gamma\tens\id)(U)=V$. It is a well known fact (see e.g.~\cite[Theorem III.2.2.8]{bl})
that $\gamma$ is of the form $\I_{\sK}\tens\gamma(x)=T(\I_{\sK}\tens{x})T^*$ for some Hilbert space $\sK$ and a unitary operator $T\colon\sK\tens\sH_U\to\sK\tens\sH_V$. It is then easy to check that $T_{12}U_{23}T_{12}^*=V_{23}$, which proves the quasi-equivalence of $U$ and $V$.

\eqref{QEF2}$\Rightarrow$\eqref{QEF3}. Since for any $\mu\in\B(\sH_V)_*$ the composition $\mu\comp\gamma$ is a normal functional on $\sA_V''$, there exists $\eta\in\B(\sH_V)_*$ such that $\mu\comp\gamma=\bigl.\eta\bigr|_{\sA_V''}$. This shows that
\begin{equation*}
\bigl\{(\eta\tens\id_{\B(L^2(\GG))})(U)\st\omega\in\B(\sH_U)_*\bigr\}\supset
\bigl\{(\mu\tens\id_{\B(L^2(\GG))})(V)\st\mu\in\B(\sH_V)_*\bigr\}.
\end{equation*}
Exchanging the roles of $U$ and $V$ we get the opposite inclusion; hence \eqref{QEF3} follows.

\eqref{QEF3}$\Rightarrow$\eqref{QEF2}.
Let $\tilde{\kappa}$ be the extension of the antipode $\kappa$ of $\GG$ to an unbounded operator acting on $\M\bigl(\C_0(\GG)\bigr)$ (see \cite[Theorem 1.6]{mu}). Recall that $\tilde{\kappa}$ is a densely defined operator acting on its domain $\D(\tilde{\kappa})\subset\M\bigl(\C_0(\GG)\bigr)$ such that for any representation $U\in\M\bigl(\K(\sH_U)\tens\C_0(\GG)\bigr)$ of $\GG$ and any $\eta\in\B(\sH_U)_*$ we have $(\eta\tens\id)(U)\in\D(\tilde{\kappa})$ and
\begin{equation}\label{kap}
\tilde{\kappa}\bigl((\eta\tens\id)(U)\bigr)=(\eta\tens\id)(U^*).
\end{equation}
Consider the set $X\subset{L^\infty(\GG)_*}$ defined so that $\omega\in{X}$ if and only if $\omega^*\comp\tilde{\kappa}$ extends to a bounded normal functional on $L^\infty(\GG)$. Define further $\cA_U^X=\bigl\{(\id\tens\omega)(U)\st\omega\in{X}\bigr\}$. Equation \eqref{kap} and the fact that $U$ is a representation ensures that $\cA_U^X$ is a weakly dense $*$-subalgebra of $\sA_U''$.

Let us define a map $\gamma_0\colon\cA_U^X\to\sA_V''$ by the following formula:
\begin{equation*}
\gamma_0\bigl((\id\tens\omega)(U)\bigr)=(\id\tens\omega)(V),\qquad\omega\in{X}.
\end{equation*}
Fix $\omega\in{X}$. Since $V\in\M\bigl(\sA_V\tens\C_0(\GG)\bigr)$, the expression $(\id\tens\omega)(V)$ makes sense. Moreover if $(\id\tens\omega)(U)=0$ then for any $\eta\in\B(\sH_U)_*$ we have $\omega\bigl((\eta\tens\id)(U)\bigr)=0$ and by our assumption $\omega\bigl((\mu\tens\id)(V)\bigr)=0$ for any $\mu\in\B(\sH_V)_*$. The last property means that $(\id\tens\omega)(V)=0$ and shows that $\gamma_0$ is well-defined (cf.~Lemma \ref{easy}). It can be checked that $\gamma_0$ is a $*$-homomorphism, for example
\begin{equation*}
\gamma_0\bigl((\id\tens\omega)(U)\bigr)^*=\bigl((\id\tens\omega)(V)\bigr)^*=
\bigl(\id\tens[\omega^*\comp\tilde{\kappa}]\bigr)(V)
=\gamma_0\bigl(\bigl((\id\tens\omega)(U)\bigr)^*\bigr).
\end{equation*}

In the next step we shall show that $\gamma_0$ may be extended to a normal $*$-isomorphism $\gamma\colon\sA_U''\to\sA_V''$. Take $x\in\sA_U$. Using Kaplansky's density theorem, we may find a bounded net $(x_i)_{i\in\cI}$ of elements in $\cA_U^X$, say $x_i=(\id\tens\omega_i)(U)$ with $\omega_i\in{X}$ such that
$\mathrm{w}\text{-}\!\lim\limits_{i\in\cI}x_i=x$. Let $M\in\RR_+$ be the corresponding bound: $\|x_i\|\leq{M}$. In what follows we shall prove that $\bigl(\gamma_0(x_i)\bigr)_{i\in\cI}$ weakly converges to a certain element $y\in\sA_V''$. Take now $\mu\in(\sA_{V}'')_*$. For each $i\in\cI$ we have $\mu\bigl(\gamma_0(x_i)\bigr)=\omega_i\bigl((\mu\tens\id)(V)\bigr)$. For $\eta\in(\sA''_{U})_*$ such that $(\mu\tens\id)(V)=(\eta\tens\id)(U)$ we obtain $\mu\bigl(\gamma_0(x_i)\bigr)=\eta(x_i)$. In particular $\bigl|\mu\bigl(\gamma_0(x_i)\bigr)\bigr|\leq{M}\|\eta\|$ and $\lim\limits_{i\in\cI}\mu\bigl(\gamma_0(x_i)\bigr)=\eta(x)$. Interpreting $\sA_V''$ as the dual of
$(\sA_{V}'')_*$ we conclude that the family $\bigl(\gamma_0(x_i)\bigr)_{i\in\cI}$ of functionals on $(\sA_{V}'')_*$  is pointwise bounded. By the Banach-Steinhaus theorem it is norm bounded. Let $N\in\RR_+$ be a bound: $\bigl\|\gamma_0(x_i)\bigr\|\leq{N}$ for all $i\in\cI$. Noting that the map $(\sA_{V}'')_*\ni\mu\mapsto\lim\limits_{i\in\cI}\,\mu\bigl(\gamma_0(x_i)\bigr)\in\CC$ is a bounded functional with the norm not greater than $N$ we conclude the existence of $y\in\sA_V''$, such that
$y=\mathrm{w}\text{-}\!\lim\limits_{i\in\cI}\gamma_0(x_i)$. This enables us to define the aforementioned extension by putting $\gamma(x)=y$. If $x_i\xrightarrow[i\in\cI]{}0$ then for each $\mu$ as above $\lim\limits_{i\in\cI}\mu\bigl(\gamma_0(x_i)\bigr)=\lim\limits_{i\in\cI}\eta(x_i)=0$, so that $y=0$. This implies that $\gamma$ is well defined.

The equality $\gamma(x^*)=\gamma(x)^*$ for any $x\in\cA_U^X$ and the fact that the star operation is weakly continuous imply that $\gamma(x^*)=\gamma(x)^*$ for any $x\in\sA_U''$. We will now show using once again \eqref{iniii} that for any $x,x'\in\sA_U''$ we have $\gamma(xx')=\gamma(x)\gamma(x')$. Note that, in the notation of the previous paragraph, for any $i\in\cI$ we have $\mu\bigl(\gamma_0(x_i)\bigr)=\eta(x_i)$ for a certain $\eta\in(\sA''_{U})_*$. Passing to the limit we get $\mu\bigl(\gamma(x)\bigr)=\eta(x)$, for any $x\in\sA_U''$. Note also that
\begin{equation*}
(\eta\cdot{x})(x'_i)=\eta(xx'_i)=\mu\bigl(\gamma(xx'_i)\bigr)
=\bigl(\mu\cdot\gamma(x)\bigr)\bigl(\gamma(x'_i)\bigr)
\end{equation*}
for any $x,x'_i\in\cA_U^X$. Again, passing to the limit, we get
\begin{equation*}
(\eta\cdot{x})(x')=\bigl(\mu\cdot\gamma(x)\bigr)\bigl(\gamma(x')\bigr)
\end{equation*}
for any $x\in\cA_U^X$ and $x'\in\sA_U''$. Replacing $x\in\cA_U^X$ with a bounded, weakly convergent net $(x_i)$ of  elements of $\cA_U^X$ and passing to the limit yields $(\eta\cdot{x})(x')=\bigl(\mu\cdot\gamma(x)\bigr)\bigl(\gamma(x')\bigr)$ for any $x,x'\in\sA_U''$. Finally we compute:
\begin{equation*}
\mu\bigl(\gamma(xx')\bigr)=\eta(xx')=(\eta\cdot{x})(x')=
\bigl(\mu\cdot\gamma(x)\bigr)\bigl(\gamma(x')\bigr)=\mu\bigl(\gamma(x)\gamma(x')\bigr)
\end{equation*}
which shows that $\gamma(xx')=\gamma(x)\gamma(x')$ for any $x,x'\in\sA_U''$.

Exchanging the roles of $U$ and $V$ leads to the inverse $*$-homomorphism $\gamma^{-1}\colon\sA_V''\to\sA_U''$. This shows that $\gamma$ is normal, since isomorphisms of von Neumann algebras are automatically normal (\cite[Corollary 3.10, page 135]{Tak}).
\end{proof}

It was shown in \cite{WorSol} that a unitary representation $U$ of $\GG$ is quasi-equivalent to $\ww^{\GG}$ if it is \emph{right absorbing}, i.e.~for any other representation $V$ of $\GG$ the tensor product $V\tp{U}$ is equivalent to $\II_{\sH_V}\tp{U}$ (this can be viewed as a version of the Fell absorption principle). We finish the section with a proposition which describes relation between quasi-equivalence of a given representation $U$ of $\GG$ with $\ww^{\GG}$ and the fact that $U\in\M\bigl(\K(\sH_U)\tens\C_0(\GG)\bigr)$ generates $\C_0(\GG)$.

\begin{proposition}\label{QE -> generating}
Let $U$ be a  representation of $\GG$ quasi-equivalent to $\ww^{\GG}$. Then the unitary element $U\in\M\bigl(\sA_U\tens\C_0(\GG)\bigr)$ generates $\C_0(\GG)$. On the other hand a representation $U$ which generates $\C_0(\GG)$ need not be quasi-equivalent to $\ww^{\GG}$ (even when $\GG$ is a locally compact group).
\end{proposition}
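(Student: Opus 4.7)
The plan is to derive the positive assertion by combining characterization \eqref{QEF3} of Theorem \ref{QE = Fourier} with Lemma \ref{cr}, and to treat the counterexample with the simplest possible classical one: a single non-trivial character of the circle.

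For the positive part, I would begin by using the quasi-equivalence of $U$ with $\ww^{\GG}$ together with Theorem \ref{QE = Fourier}\eqref{QEF3} to obtain
\begin{equation*}
\bigl\{(\eta\tens\id)(U)\st\eta\in\B(\sH_U)_*\bigr\}
=\bigl\{(\mu\tens\id)(\ww^{\GG})\st\mu\in\B(L^2(\GG))_*\bigr\}.
\end{equation*}
By \eqref{Slices} the right-hand side is contained in $\C_0(\GG)$ with norm-dense span, so the same must hold for the left-hand side. Thus the set $S=\{(\eta\tens\id)(U)\st\eta\in\B(\sH_U)_*\}$ lies inside $\C_0(\GG)$ and generates it as a \cst-algebra. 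Since $U$ is a unitary in $\M(\sA_U\tens\C_0(\GG))$ and $\sA_U\subset\B(\sH_U)$, Lemma \ref{cr} applies directly with $T=U$, $\sC=\sA_U$, $\sA=\C_0(\GG)$ and yields the desired conclusion that $U$ generates $\C_0(\GG)$.

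For the counterexample I would choose $\GG=\TT$ and let $U$ be the fundamental character $\theta\mapsto e^{i\theta}$, viewed as a unitary in $\C(\TT)\cong\M(\CC\tens\C(\TT))$. Then $\sH_U=\CC$ and $\sA_U=\CC$. The slices of $U$ against normal functionals on $\B(\CC)$ span $\CC\cdot e^{i\theta}$; together with the adjoint $e^{-i\theta}$ these generate the $*$-algebra of trigonometric polynomials, which is uniformly dense in $\C(\TT)$ by Stone-Weierstrass. Lemma \ref{cr} therefore yields that $U$ generates $\C(\TT)$. On the other hand $\sA_U''=\CC$ while $\sA_{\ww^{\TT}}''=L^\infty(\hh{\TT})=\ell^\infty(\ZZ)$, so the two von Neumann algebras are evidently not isomorphic; hence the equivalence \eqref{QEF1}$\Leftrightarrow$\eqref{QEF2} of Theorem \ref{QE = Fourier} rules out quasi-equivalence of $U$ with $\ww^{\TT}$.

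I do not anticipate a serious obstacle. Theorem \ref{QE = Fourier} already packages the essential information, and Lemma \ref{cr} is precisely the tool that converts it into a generation statement. The only subtle point to watch is that one must invoke characterization \eqref{QEF3} of quasi-equivalence (rather than merely \eqref{QEF2}) in order to see that the slices of $U$ actually land inside $\C_0(\GG)$ and not just in its multiplier algebra, which is exactly what the hypothesis $S\subset\sA$ in Lemma \ref{cr} requires.
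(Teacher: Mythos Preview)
Your argument is correct. The first part matches the paper's proof exactly: equality of Fourier spaces from Theorem \ref{QE = Fourier}\eqref{QEF3}, then Lemma \ref{cr}.

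For the counterexample you take a different route. The paper works with $\GG=\ZZ$ and $U=\ww^{\ZZ}\oplus\II_{\CC}$, first observing the general fact that a direct summand which already generates $\C_0(\GG)$ forces the whole direct sum to generate $\C_0(\GG)$; non-quasi-equivalence is then read off from $\sA_U''=L^\infty(\TT)\oplus\CC$ versus the non-atomic $L^\infty(\TT)$. Your choice of $\GG=\TT$ and the single character $z\mapsto z$ is more economical: generation follows immediately from Stone--Weierstrass via Lemma \ref{cr}, and $\sA_U''=\CC$ is visibly not isomorphic to $\ell^\infty(\ZZ)$. The paper's version has the advantage of isolating a reusable structural observation about direct sums, while yours gets to the point with the minimum possible data (a one-dimensional representation).
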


\begin{proof}
From Theorem \ref{QE = Fourier}\eqref{QEF3} it follows that
\begin{equation*}
\bigl\{(\omega\tens\id_{\B(L^2(\GG))})(U)\St\omega\in\B(\sH_U)_*\bigr\}
=\bigl\{(\mu\tens\id_{\B(L^2(\GG))})(\ww^{\GG})\St\mu\in\B\bigl(L^2(\GG)\bigr)_*\bigr\}.
\end{equation*}
Since for $Y=\bigl\{(\mu\tens\id_{\B(L^2(\GG))})(\ww^{\GG})\st\mu\in\B\bigl(L^2(\GG)\bigr)_*\bigr\}$ we have  $\bb{Y}=\C_0(\GG)$, we see that $Y$ generates $\C_0(\GG)$ as a \cst-algebra. Lemma \ref{cr} implies that $U\in\M\bigl(\sA_U\tens\C_0(\GG)\bigr)$ generates $\C_0(\GG)$.

For the second part it suffices to observe the following fact: let $U_1$ and $U_2$ be representations of a locally compact quantum group $\GG$ and let $U$ be their direct sum (\cite[Subsection 3.3.1]{WorSol}). If $U_1\in\M\bigl(\K(\sH_{U_1})\tens\C_0(\GG)\bigr)$ generates $\C_0(\GG)$ then so does $U\in\M\bigl(\K(\sH_{U_1}\oplus\sH_{U_2})\tens\C_0(\GG)\bigr)$.

Let $\GG=\ZZ$ and let $U_1$ and $U_2$ be the regular and trivial representation of $\GG$. Then $U=U_1\oplus{U_2}\in\M\bigl(\K(\ell^2(\ZZ)\oplus\CC)\tens\cc_0(\ZZ)\bigr)$ generates $\cc_0(\ZZ)$. It cannot be quasi-equivalent to $\ww^{\ZZ}$, as then, according to Theorem \ref{QE = Fourier}, we would have a (normal) $*$-isomorphism between von Neumann algebras $\sA_U''=L^{\infty}(\TT)\oplus\CC$ and $\sA_{\ww^\ZZ}''=L^{\infty}(\TT)$. However, the latter algebra is non-atomic, so we would have a contradiction.

\end{proof}

\section{Closed quantum subgroups of locally compact quantum groups}\label{Subgroups}

This section is central to our paper. We begin by introducing two possible definitions of a closed quantum subgroup of a given quantum group, the first of which appears in \cite{Vaes} and the second was suggested to us by S.L.~Woronowicz. Then we provide alternative, simplified descriptions for both of them (Theorems \ref{Vaes-equiv} and \ref{Woronequiv}) and analyze their mutual relations (Theorems \ref{Vaes->Wor} and \ref{Vaesfourier}). We also present here a proof of Theorem \ref{isoT}.

The aforementioned definitions are as follows:

\begin{definition}[{\cite[Definition 2.5]{Vaes}}] \label{Vsubgroup}
Let $\GG$, $\HH$ be locally compact quantum groups. Then $\HH$ is said to be a \emph{closed quantum subgroup of $\GG$ in the sense of Vaes} if there exists a morphism $\pi$ from $\HH$ to $\GG$ and a normal injective $*$-homomorphism $\gamma\colon{L^{\infty}(\hh{\HH})}\to{L^{\infty}(\hh{\GG})}$ such that
\begin{equation}\label{Vaesdual}
\bigl.\gamma\bigr|_{\C_0(\hh{\HH})}\comp\Lambda_{\hh{\HH}}=\Lambda_{\hh{\GG}}\comp\hh{\pi}.
\end{equation}
\end{definition}

\begin{definition}[Woronowicz] \label{Wsubgroup}
Let $\GG$, $\HH$ be locally compact quantum groups. Then $\HH$ is said to be a \emph{closed quantum subgroup of $\GG$ in the sense of Woronowicz} if there exists a morphism $\pi$ from $\HH$ to $\GG$ such that the associated bicharacter $V\in\M\bigl(\C_0(\hh{\GG})\tens\C_0(\HH)\bigr)$ generates $\C_0(\HH)$.
\end{definition}

The conditions above take as a starting point a morphism $\pi$ from $\HH$ to $\GG$. We will sometimes say that $\HH$ is a closed quantum subgroup of $\GG$ in the sense of Vaes (respectively, in the sense of Woronowicz) \emph{via the morphism $\pi$}. In Section \ref{commut} we will explain why when both $\HH$ and $\GG$ are locally compact groups both definitions are equivalent to the classical notion of $\HH$ being (homeomorphic to) a closed subgroup of $\GG$.

We will see later that the various examples of quantum subgroups considered in the literature are all closed quantum subgroups in the sense of both Vaes and Woronowicz. The case of compact  and discrete subgroups is treated in Section \ref{cptdiscr}. The non-compact examples of quantum subgroups in \cite[Sections 3 and 4]{exqga} and those coming from Rieffel deformation presented in \cite{kasp} are all closed subgroups in the sense of Vaes and Woronowicz (\cite[Section 6]{kasp}, cf.~also Theorem \ref{Vaes->Wor}). Another class of examples is provided by the bicrossed product construction (see e.g.~\cite{VV,VV2}). If $(\GG_1,\GG_2)$ is a matched pair of locally compact quantum groups in the sense of \cite[Definition 2.1]{VV2} then $\hh{\GG}_1$ is a closed quantum subgroup of the bicrossed product of $\GG_1$ and $\GG_2$ both in the sense of Vaes and Woronowicz.

In the next theorem we note that the definition of Vaes can be reformulated in various simplified ways (note especially condition \eqref{VW12}, which does not assume a priori the existence of a homomorphism between $\HH$ and $\GG$). In particular the definition of Vaes-Vainerman (\cite[Definition 2.9]{VV}) is equivalent to Definition \ref{Vsubgroup}.

\begin{theorem}\label{VW1}
Let $\GG$, $\HH$ be locally compact quantum groups. Then the following conditions are equivalent:
\begin{enumerate}
\item\label{VW11} $\HH$ is a closed quantum subgroup of $\GG$ in the sense of Vaes;
\item\label{VW12} there exists a normal injective $*$-homomorphism
$\gamma\colon{L^{\infty}(\hh{\HH})}\to{L^{\infty}(\hh{\GG})}$ such that
\begin{equation}\label{intr}
(\gamma\tens\gamma)\comp\Delta_{\hh{\HH}}=\Delta_{\hh{\GG}}\comp\gamma;
\end{equation}
\item\label{VW13} there exists a normal injective $*$-homomorphism
$\gamma\colon{L^{\infty}(\hh{\HH})}\to{L^{\infty}(\hh{\GG})}$ such that the unitary
$(\gamma\tens\id)(\ww^{\HH})\in{L^{\infty}(\hh{\GG})}\vtens{L^{\infty}(\HH)}$ is a bicharacter from $\HH$ to $\GG$ --- in particular it belongs to $\M\bigl(\C_0(\hh{\GG})\tens\C_0(\HH)\bigr)$.
\end{enumerate}
\end{theorem}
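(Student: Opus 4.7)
The plan is to establish the three equivalences by the cyclic chain $(1)\Rightarrow(2)\Rightarrow(3)\Rightarrow(1)$. The implications $(1)\Rightarrow(2)$ and $(3)\Rightarrow(1)$ are essentially formal consequences of the MRW correspondence recalled in Subsection~\ref{morphism}, while the genuine content lies in $(2)\Rightarrow(3)$, whose main obstacle is to upgrade a priori von Neumann algebraic information about $\gamma$ to a \cst-algebraic multiplier statement about the unitary $(\gamma\tens\id)(\ww^\HH)$.

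For $(1)\Rightarrow(2)$ I would use that $\hh{\pi}$ is a strong quantum homomorphism and therefore intertwines the universal coproducts, $(\hh{\pi}\tens\hh{\pi})\comp\Delta_{\hh{\HH}}^\uu=\Delta_{\hh{\GG}}^\uu\comp\hh{\pi}$. Applying $\Lambda_{\hh{\GG}}\tens\Lambda_{\hh{\GG}}$, using that the reducing morphisms intertwine the reduced and universal comultiplications, and substituting \eqref{Vaesdual} on both sides yields $(\gamma\tens\gamma)\comp\Delta_{\hh{\HH}}\comp\Lambda_{\hh{\HH}}=\Delta_{\hh{\GG}}\comp\gamma\comp\Lambda_{\hh{\HH}}$; since $\Lambda_{\hh{\HH}}\bigl(\C_0^\uu(\hh{\HH})\bigr)$ is weak$^*$-dense in $L^\infty(\hh{\HH})$ and $\gamma$ is normal, this gives \eqref{intr}.

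For $(3)\Rightarrow(1)$, given a bicharacter $V=(\gamma\tens\id)(\ww^\HH)$, the MRW correspondence produces a unique $\pi\in\Mor\bigl(\C_0^\uu(\GG),\C_0^\uu(\HH)\bigr)$ with $V=\bigl([\Lambda_{\hh{\GG}}\comp\hh{\pi}]\tens\Lambda_{\HH}\bigr)(\WW^{\HH})$ by \eqref{this}. Substituting $\ww^\HH=(\Lambda_{\hh{\HH}}\tens\Lambda_\HH)(\WW^\HH)$ from \eqref{down} one also has $V=\bigl([\gamma\comp\Lambda_{\hh{\HH}}]\tens\Lambda_\HH\bigr)(\WW^\HH)$. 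Slicing the second leg against $\omega\in L^1(\HH)$ and observing that the elements $(\id\tens\omega)(\Ww^\HH)=(\id\tens\omega\comp\Lambda_\HH)(\WW^\HH)$ are norm-dense in $\C_0^\uu(\hh{\HH})$ (from \eqref{C0u} applied to $\hh{\HH}$ together with \eqref{DualBigW}), the two bounded maps $\Lambda_{\hh{\GG}}\comp\hh{\pi}$ and $\gamma\comp\Lambda_{\hh{\HH}}$ agree on a dense subset and hence everywhere, which is precisely \eqref{Vaesdual}.

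For the central implication $(2)\Rightarrow(3)$, define $V:=(\gamma\tens\id)(\ww^\HH)$, a unitary in $L^\infty(\hh{\GG})\vtens L^\infty(\HH)$. Applying $\gamma\tens\gamma\tens\id$ to $(\Delta_{\hh{\HH}}\tens\id)(\ww^\HH)=\ww^\HH_{23}\ww^\HH_{13}$ and invoking \eqref{intr} delivers $(\Delta_{\hh{\GG}}\tens\id)(V)=V_{23}V_{13}$; analogously $\gamma\tens\id\tens\id$ applied to $(\id\tens\Delta_\HH)(\ww^\HH)=\ww^\HH_{12}\ww^\HH_{13}$ delivers $(\id\tens\Delta_\HH)(V)=V_{12}V_{13}$; both identities hold in the appropriate triple von Neumann algebra tensor products. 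The hard step is then to show $V\in\M\bigl(\C_0(\hh{\GG})\tens\C_0(\HH)\bigr)$, so that these identities become the genuine bicharacter equations \eqref{DelDel}. A useful first observation: the left slices of $V$ are automatically tame, since for $\omega\in L^1(\hh{\GG})$ normality of $\gamma$ gives $\omega\comp\gamma\in L^1(\hh{\HH})$ and hence $(\omega\tens\id)(V)=\bigl((\omega\comp\gamma)\tens\id\bigr)(\ww^\HH)\in\cA_\HH\subset\C_0(\HH)$. To obtain the full multiplier property I would combine this with the twisted pentagonal equation $\ww^\HH_{23}V_{12}=V_{12}V_{13}\ww^\HH_{23}$ derivable at the von Neumann level from the second identity above as in \cite[Lemma 3.4]{MRW}, together with the fact that $\ww^\HH$ generates $\C_0(\HH)$ as a \cst-algebra (Proposition~\ref{generatingAU}); the aim, in the spirit of Lemma~\ref{cr}, is to show that products $(c\tens d)V$ with $c\in\C_0(\hh{\GG})$, $d\in\C_0(\HH)$ lie in $\C_0(\hh{\GG})\tens\C_0(\HH)$. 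Once $V\in\M\bigl(\C_0(\hh{\GG})\tens\C_0(\HH)\bigr)$ is established, the already-verified identities are the bicharacter equations \eqref{DelDel}, completing $(3)$.
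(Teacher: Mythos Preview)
Your arguments for $(1)\Rightarrow(2)$ and $(3)\Rightarrow(1)$ are essentially the same as the paper's (the paper calls $(1)\Rightarrow(2)$ ``trivial'' and for $(3)\Rightarrow(1)$ argues just as you do, additionally noting that $\gamma|_{\C_0(\hh{\HH})}\in\Mor\bigl(\C_0(\hh{\HH}),\C_0(\hh{\GG})\bigr)$ follows because $\ww^\HH$ generates $\C_0(\hh{\HH})$).

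The gap is in your $(2)\Rightarrow(3)$, precisely at the ``hard step'' where you try to prove $V\in\M\bigl(\C_0(\hh{\GG})\tens\C_0(\HH)\bigr)$. Your two ingredients --- that the left slices $(\omega\tens\id)(V)$ lie in $\C_0(\HH)$, and the relation $\ww^{\HH}_{23}V_{12}=V_{12}V_{13}\ww^{\HH}_{23}$ coming from the \emph{second} bicharacter identity --- give no control over the first leg: neither $\ww^\HH$ nor your slice observation involves $\C_0(\hh{\GG})$ at all, so there is no way to conclude that $(c\tens d)V\in\C_0(\hh{\GG})\tens\C_0(\HH)$ from them. (Lemma~\ref{cr} is also not relevant here: it is a criterion for a multiplier to \emph{generate} a \cst-algebra, not for an operator to \emph{be} a multiplier.)

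The paper resolves this by exploiting the \emph{first} bicharacter identity instead. Applying $\gamma\tens\gamma\tens\id$ to $(\Delta_{\hh{\HH}}\tens\id)(\ww^\HH)=\ww^\HH_{23}\ww^\HH_{13}$ and using \eqref{intr} gives $(\Delta_{\hh{\GG}}\tens\id)(V)=V_{23}V_{13}$; now implementing $\Delta_{\hh{\GG}}$ via $\ww^\GG$ yields
\[
V_{13}=(\ww^{\GG}_{12})^*V_{23}\,\ww^{\GG}_{12}\,V_{23}^*.
\]
The point is that $\ww^\GG\in\M\bigl(\C_0(\hh{\GG})\tens\C_0(\GG)\bigr)$, while $V=(\gamma'\tens\id)(\ww^\HH)\in\M\bigl(\K(L^2(\GG))\tens\C_0(\HH)\bigr)$ because $\gamma'=\gamma|_{\C_0(\hh{\HH})}$ is a representation on $L^2(\GG)$. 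Hence the right-hand side lies in $\M\bigl(\C_0(\hh{\GG})\tens\K(L^2(\GG))\tens\C_0(\HH)\bigr)$, and since the left-hand side has legs only in the first and third tensor factors, a standard leg argument forces $V\in\M\bigl(\C_0(\hh{\GG})\tens\C_0(\HH)\bigr)$. In short: you need the twisted pentagonal equation involving $\ww^\GG$ (equation \eqref{VG}), not the one involving $\ww^\HH$.
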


It will become clear from the proof of Theorem \ref{VW1} that the map $\gamma$ mentioned in point \eqref{VW12} is the same as the one in \eqref{VW13} and still the same as the map $\gamma$ from Definition \ref{Vsubgroup}. Moreover we show in the proof that $\gamma$ restricted to $\C_0(\hh{\HH})$ is an element of $\Mor\bigl(\C_0(\hh{\HH}),\C_0(\hh{\GG})\bigr)$.

\begin{proof}[Proof of Theorem \ref{VW1}]
\eqref{VW11}$\Rightarrow$\eqref{VW12} --- trivial.

\eqref{VW12}$\Rightarrow$\eqref{VW13}. The map $\gamma'=\bigl.\gamma\bigr|_{\C_0(\hh{\HH})}$ is naturally a representation of the \cst-algebra $\C_0(\hh{\HH})$ on $L^2(\GG)$. Consider the unitary $V=(\gamma'\tens\id)(\ww^{\HH})\in\M\bigl(\K\bigl(L^2(\GG)\bigr)\tens\C_0(\HH)\bigr)$. Applying $\gamma\tens\gamma\tens\id$ to both sides of the equality
\begin{equation*}
(\Delta_{\hh{\HH}}\tens\id)(\ww^{\HH})=\ww^{\HH}_{23}\ww^{\HH}_{13}
\end{equation*}
(viewed as an equality of operators in $L^{\infty}(\hh{\HH})\,\vtens\,{L^{\infty}(\hh{\HH})}\,\vtens\,{L^{\infty}(\HH)}$) and using the equation \eqref{intr} we see that
\begin{equation}\label{exactly}
V_{13}=(\ww^{\GG}_{12})^*V_{23}\ww^{\GG}_{12}V_{23}^*.
\end{equation}
The right side of the above expression belongs to
$\M\bigl(\C_0(\hh{\GG})\tens\K\bigl(L^2(\GG)\bigr)\tens\C_0(\HH)\bigr)$ and $V_{13}$ has legs only in the first and third tensor factor. Thus $V\in\M\bigl(\C_0(\hh{\GG})\tens\C_0(\HH)\bigr)$. Note that \eqref{exactly} is precisely \eqref{VG}.
The application of $\gamma\tens\id\tens\id$ to the pentagonal equation for $\ww^{\HH}$ implies that \eqref{VH} holds and $V$ is a bicharacter.

\eqref{VW13}$\Rightarrow$\eqref{VW11}. Note first that as $V\in\M\bigl(\C_0(\hh{\GG})\tens\C_0(\HH)\bigr)$, it follows that
\begin{equation*}
\gamma'=\bigl.\gamma\bigr|_{\C_0(\hh\HH)}\in\Mor\bigl(\C_0(\hh{\HH}),\C_0(\hh{\GG})\bigr)
\end{equation*}
because $\ww^{\HH}\in\M\bigl(\C_0(\hh{\HH})\tens\C_0(\HH)\bigr)$ generates $\C_0(\hh{\HH})$.

Let $\pi\in\Mor\bigl(\C_0^\uu(\GG),\C_0^\uu(\HH)\bigr)$ be the strong quantum homomorphism associated with the bicharacter $V$ and let $\hh{\pi}\in\Mor\bigl(\C_0^\uu(\hh{\HH}),\C_0(\hh{\GG})\bigr)$ be the dual quantum homomorphism. Then on one hand we have (recall the dependencies between $V$, $\hh{V}$ and $\hh{\pi}$ listed in Subsection \ref{morphism})
\begin{equation*}
(\id\tens\gamma')(\ww^{\hh{\HH}})=\hh{V}
\end{equation*}
and on the other hand
\begin{equation*}
\hh{V}=\bigl(\Lambda_{\HH}\tens[\Lambda_{\hh{\GG}}\comp\hh{\pi}]\bigr)(\WW^{\hh{\HH}}).
\end{equation*}
(this is \eqref{this} combined with \eqref{DualBigW}). Comparing the above and using the fact that
\begin{equation*}
\ww^{\hh{\HH}}=(\Lambda_{\HH}\tens\Lambda_{\hh{\HH}})(\WW^{\hh{\HH}})
\end{equation*}
(i.e.~\eqref{down} for the quantum group $\hh{\HH}$) we obtain
\begin{equation*}
\bigl(\Lambda_{\HH}\tens[\gamma'\comp\Lambda_{\hh{\HH}}]\bigr)(\WW^{\hh{\HH}})
=\bigl(\Lambda_{\HH}\tens[\Lambda_{\hh{\GG}}\comp\hh{\pi}]\bigr)(\WW^{\hh{\HH}}).
\end{equation*}
This can be rewritten as
\begin{equation*}
\bigl(\id\tens[\gamma'\comp\Lambda_{\hh{\HH}}]\bigr)(\wW^{\hh{\HH}})
=\bigl(\id\tens[\Lambda_{\hh{\GG}}\comp\hh{\pi}]\bigr)(\wW^{\hh{\HH}}).
\end{equation*}
and upon application of $(\omega\tens\id)$ with $\omega\in\B\bigl(L^2(\HH)\bigr)_*$ yields
\begin{equation*}
(\gamma'\comp\Lambda_{\hh{\HH}})\bigl((\omega\tens\id)(\wW^{\hh{\HH}})\bigr)=
(\Lambda_{\hh{\GG}}\comp\hh{\pi})\bigl((\omega\tens\id)(\wW^{\hh{\HH}})\bigr)
\end{equation*}
for any such $\omega$. By \eqref{C0u} this implies that \eqref{Vaesdual} holds. This ends the proof.
\end{proof}

Theorem \ref{VW1} and a straightforward application of Theorem \ref{QE = Fourier} yields the following result.

\begin{theorem}\label{Vaes-equiv}
Let $\GG$, $\HH$ be locally compact quantum groups and suppose that $V\in\M\bigl(\C_0(\hh{\GG})\tens\C_0(\HH)\bigr)$ is a bicharacter describing a morphism $\pi$ from $\HH$ to $\GG$. Then the following conditions are equivalent:
\begin{enumerate}
\item\label{Vaes-equiv1} $\HH$ is a closed quantum subgroup of $\GG$ in the sense of Vaes via the morphism $\pi$;
\item\label{Vaes-equiv2} the bicharacter $V$ is quasi-equivalent to $\ww^{\HH}$ (as a representation of $\HH$);
\item\label{Vaes-equiv3} $\cA_{\hh{V}}=\cA_{\ww^{\hh{\HH}}}$.
\end{enumerate}
\end{theorem}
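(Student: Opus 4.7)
The plan is to deduce both equivalences directly from Theorem \ref{QE = Fourier} combined with Theorem \ref{VW1}. Observe first that $V$, being a bicharacter, is in particular a strongly continuous unitary representation of $\HH$ on $L^2(\GG)$, so speaking about its quasi-equivalence to the representation $\ww^\HH$ of $\HH$ on $L^2(\HH)$ is meaningful.

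For \eqref{Vaes-equiv1} $\Leftrightarrow$ \eqref{Vaes-equiv2} I would apply the equivalence \eqref{QEF1} $\Leftrightarrow$ \eqref{QEF2} of Theorem \ref{QE = Fourier}: quasi-equivalence of $V$ and $\ww^\HH$ produces a normal $*$-isomorphism $\gamma\colon\sA_{\ww^\HH}''\to\sA_V''$ such that $(\gamma\tens\id)(\ww^\HH)=V$. Since the slices of $\ww^\HH$ generate $\C_0(\hh{\HH})$, one has $\sA_{\ww^\HH}''=L^\infty(\hh{\HH})$, while $\sA_V''$ is a von Neumann subalgebra of $L^\infty(\hh{\GG})$. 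Composing $\gamma$ with the inclusion therefore yields a normal injective $*$-homomorphism $L^\infty(\hh{\HH})\to L^\infty(\hh{\GG})$ verifying condition \eqref{VW13} of Theorem \ref{VW1}, and the induced bicharacter equals exactly $V$, so the morphism produced is the given $\pi$. Conversely, starting from \eqref{Vaes-equiv1}, Theorem \ref{VW1} furnishes a normal injective $\gamma\colon L^\infty(\hh{\HH})\to L^\infty(\hh{\GG})$ with $(\gamma\tens\id)(\ww^\HH)=V$; normality and injectivity imply $\gamma(\sA_{\ww^\HH}'')=\sA_V''$ (slices of $\ww^\HH$ are carried by $\gamma$ to slices of $V$), so $\gamma$ restricts to a normal $*$-isomorphism $\sA_{\ww^\HH}''\to\sA_V''$ intertwining the two representations, and the reverse direction of Theorem \ref{QE = Fourier} delivers \eqref{Vaes-equiv2}.

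For \eqref{Vaes-equiv2} $\Leftrightarrow$ \eqref{Vaes-equiv3} I would invoke \eqref{QEF1} $\Leftrightarrow$ \eqref{QEF3} of Theorem \ref{QE = Fourier}, which identifies the quasi-equivalence of $V$ and $\ww^\HH$ with the equality
\begin{equation*}
\bigl\{(\eta\tens\id)(V)\st\eta\in\B(L^2(\GG))_*\bigr\}
=\bigl\{(\mu\tens\id)(\ww^\HH)\st\mu\in\B(L^2(\HH))_*\bigr\}
\end{equation*}
of subsets of $\M\bigl(\C_0(\HH)\bigr)$. Using the duality relations $\hh{V}=\sigma(V^*)$ and $\ww^{\hh{\HH}}=\sigma((\ww^\HH)^*)$, and that the preduals of $L^\infty(\hh{\GG})$ and $L^\infty(\hh{\HH})$ are stable under $\omega\mapsto\bar{\omega}\comp*$, taking adjoints of each side of the above equality transforms the left-hand side into $\cA_{\hh{V}}$ and the right-hand side into $\cA_{\ww^{\hh{\HH}}}$, yielding the equivalence with \eqref{Vaes-equiv3}.

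The main obstacle will be pure bookkeeping --- keeping straight which leg of which multiplier algebra is being sliced, and carefully executing the flip-and-adjoint conversion that relates slices of $V$ in one leg to slices of $\hh{V}$ in the other. No new ideas are required beyond combining the two equivalence theorems already in hand.
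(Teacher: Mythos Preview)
Your proposal is correct and follows essentially the same approach as the paper's proof, which simply cites Theorem~\ref{QE = Fourier} together with (the proof of) Theorem~\ref{VW1} for \eqref{Vaes-equiv1}$\Leftrightarrow$\eqref{Vaes-equiv2}, and the identification of the sets in \eqref{Vaes-equiv3} with those in condition~\eqref{QEF3} of Theorem~\ref{QE = Fourier} via $\hh{V}=\sigma(V^*)$, $\ww^{\hh{\HH}}=\sigma(\ww^{\HH})^*$ for \eqref{Vaes-equiv2}$\Leftrightarrow$\eqref{Vaes-equiv3}. You have simply unpacked these citations in more detail; the only point to watch is that in the direction \eqref{Vaes-equiv2}$\Rightarrow$\eqref{Vaes-equiv1} the morphism produced by Theorem~\ref{VW1} is indeed the given $\pi$ because the bicharacter--strong-quantum-homomorphism correspondence of \cite{MRW} is a bijection, which you correctly note.
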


\begin{proof}
The equivalence of \eqref{Vaes-equiv2} and \eqref{Vaes-equiv3} follows immediately from Theorem \ref{QE = Fourier}, as the sets appearing in \eqref{Vaes-equiv3} here coincide with the analogous sets in condition \eqref{QEF3} of that theorem (recall that $\hh{V}=\sigma(V^*)$, $\ww^{\hh{\HH}}=\sigma(\ww^{\HH})^*$). The equivalence of \eqref{Vaes-equiv1} and \eqref{Vaes-equiv2} follows again from Theorem \ref{QE = Fourier} and (the proof of) Theorem \ref{VW1}.
\end{proof}

It now follows from Theorem \ref{Vaes-equiv} and Proposition \ref{QE -> generating}  that there is a natural relation between Definitions \ref{Vsubgroup} and \ref{Wsubgroup}.

\begin{theorem} \label{Vaes->Wor}
If $\HH$ is a closed quantum subgroup of $\GG$ in the sense of Vaes, it is also a closed quantum subgroup of $\GG$ in the sense of Woronowicz.
\end{theorem}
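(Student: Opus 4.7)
The plan is to verify, via the criterion of Lemma \ref{cr}, that the bicharacter $V\in\M\bigl(\C_0(\hh{\GG})\tens\C_0(\HH)\bigr)$ realizing the Vaes inclusion generates $\C_0(\HH)$ in the sense of Definition \ref{DefGen}. I apply Lemma \ref{cr} with $T=V$, $\sA=\C_0(\HH)$ and $\sC=\C_0(\hh{\GG})\subset\B\bigl(L^2(\GG)\bigr)$: what remains is to check that the set
\begin{equation*}
S=\bigl\{(\omega\tens\id)(V)\St\omega\in\B\bigl(L^2(\GG)\bigr)_*\bigr\}
\end{equation*}
lies in $\C_0(\HH)$ and generates it as a \cst-algebra.

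Both claims follow at once from the Fourier-space characterization of quasi-equivalence. By Theorem \ref{Vaes-equiv} the Vaes hypothesis says precisely that $V$, regarded as a representation of $\HH$ on $L^2(\GG)$, is quasi-equivalent to the reduced bicharacter $\ww^{\HH}$. Applying the equivalence \eqref{QEF1}$\Leftrightarrow$\eqref{QEF3} of Theorem \ref{QE = Fourier} to this pair of representations of $\HH$ (with the ambient identity now to be read as $\id_{\B(L^2(\HH))}$) yields
\begin{equation*}
S=\bigl\{(\mu\tens\id)(\ww^{\HH})\St\mu\in\B\bigl(L^2(\HH)\bigr)_*\bigr\}.
\end{equation*}
By formula \eqref{Slices} applied to $\HH$, this set is contained in $\C_0(\HH)$ with norm-closure equal to $\C_0(\HH)$; in particular $S\subset\C_0(\HH)$ and $S$ generates $\C_0(\HH)$ as a \cst-algebra. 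Lemma \ref{cr} therefore delivers the Woronowicz condition of Definition \ref{Wsubgroup}, and $\HH$ is a closed quantum subgroup of $\GG$ in the sense of Woronowicz via the same morphism $\pi$.

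There is no genuinely hard step remaining in the proof itself: the substantive work is already packaged in Theorems \ref{Vaes-equiv} and \ref{QE = Fourier}, after which the implication Vaes $\Rightarrow$ Woronowicz is a three-line assembly. The only point that deserves care is the bookkeeping of the tensor legs and the ambient quantum group: the relevant convention is that of Definition \ref{DefGen}, with $\C_0(\HH)$ cast as the algebra being generated, and the ``$\id_{\B(L^2(\GG))}$'' appearing in Theorem \ref{QE = Fourier} must be reinterpreted as $\id_{\B(L^2(\HH))}$ when the ambient quantum group is $\HH$ rather than $\GG$.
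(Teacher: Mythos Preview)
Your proof is correct and follows essentially the same route as the paper. The paper's one-line proof cites Theorem \ref{Vaes-equiv} and Proposition \ref{QE -> generating}; you have simply unpacked the proof of Proposition \ref{QE -> generating} inline (that proposition is itself proved via Theorem \ref{QE = Fourier}\eqref{QEF3}, formula \eqref{Slices}, and Lemma \ref{cr}, exactly the ingredients you invoke).
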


\begin{proof}
Immediate consequence of Theorem \ref{Vaes-equiv} and Proposition \ref{QE -> generating}.
\end{proof}

It is not clear if Definitions \ref{Vsubgroup} and \ref{Wsubgroup} are equivalent; in other words, whether Theorem \ref{Vaes->Wor} admits the converse. This would follow if we could show that a bi-character $U\in\M\bigl(\C_0(\hh{\GG})\tens\C_0(\HH)\bigr)$ describing a homomorphism from $\HH$ to $\GG$ which generates $\C_0(\HH)$ must be quasi-equivalent to $\ww^{\HH}$ (the example in Proposition \ref{QE -> generating} showed it need not be the case if we only assume that $U$ is a representation of $\HH$). In the following sections we will show that in fact the equivalence of Definitions \ref{Vsubgroup} and \ref{Wsubgroup} holds in many natural cases.

Now we show that Definition \ref{Wsubgroup} also admits several natural equivalent reformulations. We collect them in the next theorem.

\begin{theorem}\label{Woronequiv}
Let $\GG$, $\HH$ be locally compact quantum groups and consider a homomorphism from $\HH$ to $\GG$ described by a bicharacter $V\in\M\bigl(\C_0(\hh{\GG})\tens\C_0(\HH)\bigr)$, a strong quantum homomorphism $\pi\in\Mor\bigl(\C_0^\uu(\GG),\C_0^\uu(\HH)\bigr)$ and a right quantum homomorphism $\rho\in\Mor\bigl(\C_0(\GG),\C_0(\GG)\tens\C_0(\HH)\bigr)$. Then the following conditions are equivalent (recall that $\hh{V}:=\sigma(V)^*$ is a representation of $\hh{\GG}$):
\begin{enumerate}
\item\label{Woronequiv1} $V\in\M\bigl(\C_0(\hh{\GG})\tens\C_0(\HH)\bigr)$ generates $\C_0(\HH)$ (in other words $\HH$ is a closed quantum subgroup of $\GG$ in the sense of Woronowicz);
\item\label{Woronequiv2} $\sA_{\hh{V}}=\C_0(\HH)$;
\item\label{Woronequiv3} the right quantum homomorphism $\rho$ is \emph{strongly non-degenerate}:
\begin{equation}\label{rightqh}
\bb{\rho\bigl(\C_0(\GG)\bigr)\bigl(\C_0(\GG)\tens\I_{\C_0(\HH)}\bigr)}=\C_0(\GG)\tens\C_0(\HH)
\end{equation}
(in particular the left hand side of \eqref{rightqh} is contained in the right hand side);
\item\label{Woronequiv4} $\pi\bigl(\C_0^\uu(\GG)\bigr)=\C_0^\uu(\HH)$;
\item\label{Woronequiv5} $(\Lambda_{\HH}\comp\pi)\bigl(\C_0^\uu(\GG)\bigr)=\C_0(\HH)$.
\end{enumerate}
\end{theorem}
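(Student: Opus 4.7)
My plan is to prove the five equivalences via three essentially independent arguments. Conditions (1), (2), (3) describe the reduced bicharacter $V$ (and the associated right quantum homomorphism $\rho$), while (4) and (5) describe the strong quantum homomorphism $\pi$ at the universal level.

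\emph{Equivalence (1)$\Leftrightarrow$(2).} The unitary $\hh{V}=\sigma(V)^*\in\M\bigl(\C_0(\HH)\tens\C_0(\hh{\GG})\bigr)$ is a representation of $\hh{\GG}$ on $L^2(\HH)$. Since the condition of Definition~\ref{DefGen} is manifestly symmetric under taking the adjoint of a unitary, $V$ generates $\C_0(\HH)$ if and only if $\hh V$ does so in the convention of Remark~\ref{order}. Applying Proposition~\ref{generatingAU} to the representation $\hh V$ with $\sA=\C_0(\HH)\subset\B\bigl(L^2(\HH)\bigr)$ then yields the equivalence with $\C_0(\HH)=\sA_{\hh{V}}$.

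\emph{Equivalence (1)$\Leftrightarrow$(5)$\Leftrightarrow$(4).} Combining \eqref{wWWw} and \eqref{Vdef} one obtains $V=\bigl(\id\tens[\Lambda_\HH\comp\pi]\bigr)(\wW^\GG)$, and by \eqref{C0u} the first-leg slices of $\wW^\GG$ are norm-dense in $\C_0^\uu(\GG)$; hence the first-leg slices of $V$ are norm-dense in $(\Lambda_\HH\comp\pi)\bigl(\C_0^\uu(\GG)\bigr)$, which is automatically norm-closed as the image of a $*$-homomorphism between \cst-algebras. Those same slices of $V$ coincide, up to adjoint, with the second-leg slices of $\hh V$, whose closure equals $\sA_{\hh V}$. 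Together with the previous paragraph this gives (1)$\Leftrightarrow$(5); Lemma~\ref{cr} supplies an alternative quick proof of (5)$\Rightarrow$(1). The implication (4)$\Rightarrow$(5) follows immediately from the surjectivity of $\Lambda_\HH$. For the converse (5)$\Rightarrow$(4) --- which I expect to be the most delicate point --- I would replay the slice argument at the universal level, using the half-universal bicharacter $(\id\tens\pi)(\wW^\GG)\in\M\bigl(\C_0(\hh{\GG})\tens\C_0^\uu(\HH)\bigr)$, whose first-leg slices are norm-dense in $\pi\bigl(\C_0^\uu(\GG)\bigr)$ and a priori sit inside $\C_0^\uu(\HH)$. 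The obstacle is to rule out that $\pi\bigl(\C_0^\uu(\GG)\bigr)$ is a proper closed subalgebra of $\C_0^\uu(\HH)$ whose image under $\Lambda_\HH$ nevertheless exhausts $\C_0(\HH)$; this is automatic in the coamenable case, but in general must be handled by exploiting the universal-level generating property of this bicharacter together with the interplay between $\pi$ and $\hh\pi$ furnished by the basic equation \eqref{basic}.

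\emph{Equivalence (1)$\Leftrightarrow$(3).} I will slice the twisted pentagonal equation \eqref{VG}, namely $V_{23}\ww^\GG_{12}=\ww^\GG_{12}V_{13}V_{23}$, using \eqref{Slices} to realize arbitrary $x\in\C_0(\GG)$ as a slice $(\omega\tens\id)(\ww^\GG)$. A short manipulation then rewrites $\rho(x)(y\tens\I_{\C_0(\HH)})$ as a finite (or norm-convergent) combination of elements of the form $z\tens\bigl((\mu\tens\id)(V)\bigr)$ with $z\in\C_0(\GG)$ and $\mu\in\B\bigl(L^2(\HH)\bigr)_*$. This reduces the density of $\rho\bigl(\C_0(\GG)\bigr)\bigl(\C_0(\GG)\tens\I_{\C_0(\HH)}\bigr)$ in $\C_0(\GG)\tens\C_0(\HH)$ to norm-density of the second-leg slices of $V$ in $\C_0(\HH)$, which by the first argument is precisely condition (2). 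Reversing the computation supplies the converse direction.
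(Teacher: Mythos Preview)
Your overall architecture matches the paper's: the equivalences (1)$\Leftrightarrow$(2), (2)$\Leftrightarrow$(5) (which you phrase as (1)$\Leftrightarrow$(5)), (4)$\Rightarrow$(5), and (2)$\Leftrightarrow$(3) are all handled essentially as in the paper (a small slip: in your (1)$\Leftrightarrow$(3) sketch the slice functionals $\mu$ should lie in $\B\bigl(L^2(\GG)\bigr)_*$, not $\B\bigl(L^2(\HH)\bigr)_*$, since the first leg of $V$ is $\C_0(\hh{\GG})$).

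The genuine gap is (5)$\Rightarrow$(4). You correctly isolate the obstacle --- ruling out that $\pi\bigl(\C_0^\uu(\GG)\bigr)$ is a proper subalgebra of $\C_0^\uu(\HH)$ mapped onto $\C_0(\HH)$ by $\Lambda_\HH$ --- but your proposed resolution (``universal-level generating property \dots\ together with \eqref{basic}'') is too vague and not quite on the right track. The paper does not use \eqref{basic} here. Instead it derives, from $(\id\tens\Delta_\HH^\uu)(V^\uu)=V^\uu_{12}V^\uu_{13}$ and the implementation $(\Lambda_\HH\tens\id)\Delta_\HH^\uu(x)=\wW^\HH(\Lambda_\HH(x)\tens\I)(\wW^\HH)^*$, the identity
\[
V^\uu_{13}=V_{12}^*\,\wW^{\HH}_{23}\,V_{12}\,(\wW^{\HH}_{23})^*,
\]
and then computes the closed span of first-leg slices of $V^\uu$ step by step: unitarity of $V$ removes the outer $V_{12}^*$; the hypothesis $\sA_{\hh V}=\C_0(\HH)$ (i.e.\ condition (2)) lets one replace the slices $(\omega\tens\id)(V)$ on the first leg by slices $(\eta\tens\id)(\ww^\HH)$ with $\eta$ ranging over $\B\bigl(L^2(\HH)\bigr)_*$; finally the half-lifted pentagonal relation $\wW^\HH_{23}\ww^\HH_{12}(\wW^\HH_{23})^*=\ww^\HH_{12}\wW^\HH_{13}$ converts everything into first-leg slices of $\wW^\HH$, which by \eqref{C0u} fill out $\C_0^\uu(\HH)$. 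This is the missing concrete mechanism: one must pass from $V$ to $\ww^\HH$ (using that $V$ generates $\C_0(\HH)$) and then from $\ww^\HH$ to $\wW^\HH$ (using the half-lifted pentagonal equation), and your outline does not supply either of these two moves.
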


\begin{proof}
\eqref{Woronequiv1}$\Leftrightarrow$\eqref{Woronequiv2}. This follows from Proposition \ref{generatingAU} and an obvious fact that $V\in\M\bigl(\C_0(\hh{\GG})\tens\C_0(\HH)\bigr)$ generates $\C_0(\HH)$ if and only if $\hh{V}\in\M\bigl(\C_0(\HH)\tens\C_0(\hh{\GG})\bigr)$ generates $\C_0(\HH)$ (cf.~Remark \ref{order}).

\eqref{Woronequiv2}$\Leftrightarrow$\eqref{Woronequiv3}. We compute:
\begin{equation*}
\begin{split}
&\!\!\bb{\rho\bigl(\C_0(\GG)\bigr)\bigl(\C_0(\GG)\tens\I\bigr)}\\
&=\bigl\{V\bigl((x\cdot\omega\tens\id)(\ww^{\GG})^*\tens\I\bigr)V^*(y\tens\I)
\St{x}\in\C_0(\hh{\GG}),\:y\in\C_0(\GG),\:\omega\in\B\bigl(L^2(\GG)\bigr)_*\bigr\}^\nc\\
&=\bigl\{(\omega\tens\id\tens\id)\bigl(V_{23}(\ww^{\GG}_{12})^*V_{23}^*(x\tens{y}\tens\I)\bigr)
\St{x}\in\C_0(\hh{\GG}),\:y\in\C_0(\GG),\:\omega\in\B\bigl(L^2(\GG)\bigr)_*\bigr\}^\nc\\
&=\bigl\{(\omega\tens\id\tens\id)\bigl(V_{13}^*(\ww^{\GG}_{12})^*(x\tens{y}\tens\I)\bigr)
\St{x}\in\C_0(\hh{\GG}),\:y\in\C_0(\GG),\:\omega\in\B\bigl(L^2(\GG)\bigr)_*\bigr\}^\nc\\
&=\bigl\{(\omega\tens\id\tens\id)\bigl(V_{13}^*(x\tens{y}\tens\I)\bigr)
\St{x}\in\C_0(\hh{\GG}),\:y\in\C_0(\GG),\:\omega\in\B\bigl(L^2(\GG)\bigr)_*\bigr\}^\nc\\
&=\C_0(\GG)\tens\bigl\{(\omega\tens\id)(V)\St\omega\in\B\bigl(L^2(\GG)\bigr)_*\bigr\}^\nc
=\C_0(\GG)\tens\sA_{\hh{V}}
\end{split}
\end{equation*}
In the third equality we used the bicharacter property of $V$ (Eq.~\eqref{VG}) and in the fourth equality we used the fact that $\ww^{\GG}\in\M\bigl(\C_0(\hh{\GG})\tens\C_0(\GG)\bigr)$ is unitary. The above computation shows that $\sA_{\hh{V}}=\C_0(\HH)$ if and only if $\bb{\rho\bigl(\C_0(\GG)\bigr)\bigl(\C_0(\GG)\tens\I\bigr)}=\C_0(\GG)\tens\C_0(\HH)$.

\eqref{Woronequiv2}$\Leftrightarrow$\eqref{Woronequiv5}.
Taking into account \eqref{C0u}, \eqref{wWWw} and \eqref{Vdef} we find that $V=\bigl(\id\tens[\Lambda_{\HH}\comp\pi]\bigr)(\wW^{\GG})$ and
\begin{equation}\label{Lampi}
(\Lambda_{\HH}\comp\pi)\bigl((\omega\tens\id)(\wW^{\GG})\bigr)=(\omega\tens\id)(V).
\end{equation}
Thus
\begin{equation*}
\begin{split}
(\Lambda_{\HH}\comp\pi)\bigl(\C_0^\uu(\GG)\bigr)&=(\Lambda_{\HH}\comp\pi)
\bigl(\bigl\{(\omega\tens\id)(\wW^{\GG})\St\omega\in\B\bigr(L^2(\GG)\bigr)_*\bigr\}^\nc\bigr)\\
&=\bigl\{(\Lambda_{\HH}\comp\pi)\bigl((\omega\tens\id)(\wW^{\GG})\bigr)
\St\omega\in\B\bigr(L^2(\GG)\bigr)_*\bigr\}^\nc=\sA_{\hh{V}}.
\end{split}
\end{equation*}

\eqref{Woronequiv4}$\Leftrightarrow$\eqref{Woronequiv5}.
Since \eqref{Woronequiv4}$\Rightarrow$\eqref{Woronequiv5} is clear, it remains to show the converse implication. Consider the universal lift of $V^\uu\in\M\bigl(\C_0(\hh{\GG})\tens\C_0^\uu(\HH)\bigr)$ defined as $V^\uu=(\id\tens\pi)(\wW^{\GG})$, cf.~\cite[Section 4]{MRW}. To show the desired implication it suffices to establish the following equality:
\begin{equation*}
\bigl\{(\omega\tens\id)(V^\uu)\St\omega\in\B\bigl(L^2(\GG)\bigr)_*\bigr\}^\nc=\C_0^\uu(\HH)
\end{equation*}
(cf.~the proof of \eqref{Woronequiv2}$\Leftrightarrow$\eqref{Woronequiv5}). Noting that\footnote{\label{foot}To prove \eqref{VVWVW} we first note that we have $(\Delta_{\hh{\HH}}\tens\id)\wW^{\HH}=\wW^{\HH}_{13}\wW^{\HH}_{23}$ as $\wW^\HH$ is the unique lift of $\ww^{\HH}$ to a bicharacter in $\M\bigl(\C_0(\hh{\HH})\tens\C_0^\uu(\HH)\bigr)$ described in \cite[Proposition 4.14]{MRW}. This can be rewritten as $\ww^{\HH}_{12}\wW^{\HH}_{13}=\wW^{\HH}_{23}\ww^{\HH}_{12}(\wW^{\HH}_{23})^*$. Slicing with $\omega\in\B\bigl(L^2(\HH)\bigr)_*$ on the left leg we obtain the formula $(\Lambda_{\HH}\tens\id)\Delta_{\HH}^\uu(x)=\wW^\HH\bigl(\Lambda_\HH(x)\tens\I\bigr)(\wW^\HH)^*$ for all $x\in\C_0^\uu(\HH)$. Now we apply $\id\tens\Lambda_{\HH}\tens\id$ to both sides of $(\id\tens\Delta_{\HH}^\uu)(V^\uu)=V^\uu_{12}V^\uu_{13}$. By the previous formula this reads $\wW^\HH_{23}V_{12}(\wW^\HH_{23})^*=V_{12}V^\uu_{13}$ which is \eqref{VVWVW}.}
\begin{equation}\label{VVWVW}
V_{13}^\uu=V_{12}^*\wW^{\HH}_{23}V_{12}(\wW^{\HH}_{23})^*
\end{equation}
we compute:
\begin{equation*}
\begin{split}
\bigl\{(\omega\tens\id)(V^\uu)&\St\omega\in\B\bigl(L^2(\GG)\bigr)_*\bigr\}^\nc\\
&=\bigl\{(\omega\tens\mu\tens\id)\bigl(V_{12}^*\wW^{\HH}_{23}V_{12}(\wW^{\HH}_{23})^*\bigr)
\St\omega\in\B\bigl(L^2(\GG)\bigr)_*,\:\mu\in\B\bigl(L^2(\HH)\bigr)_*\bigr\}^\nc\\
&=\bigl\{(\omega\tens\mu\tens\id)\bigl(\wW^\HH_{23}V_{12}(\wW^\HH_{23})^*\bigr)
\St\omega\in\B\bigl(L^2(\GG)\bigr)_*,\:\mu\in\B\bigl(L^2(\HH)\bigr)_*\bigr\}^\nc\\
&=\bigl\{
(\eta\tens\mu\tens\id)\bigl(\wW^\HH_{23}\ww^{\HH}_{12}(\wW^\HH_{23})^*\bigr)
\St\eta\in\B\bigl(L^2(\HH)\bigr)_*,\:\mu\in\B\bigl(L^2(\HH)\bigr)_*\bigr\}^\nc\\
&=\bigl\{(\eta\tens\mu\tens\id)(\ww^{\HH}_{12}\wW^\HH_{13})
\St\eta\in\B\bigl(L^2(\HH)\bigr)_*,\:\mu\in\B\bigl(L^2(\HH)\bigr)_*\bigr\}^\nc\\
&=\bigl\{(\eta\cdot{x}\tens\id)(\wW^\HH_{13})\St\eta\in\B\bigl(L^2(\HH)\bigr)_*,\:
{x}\in\C_0(\hh{\HH})\bigr\}^\nc=\C_0^\uu(\HH).
\end{split}
\end{equation*}
The second equality follows from the unitarity of $V$, in the third one we used the fact that $V\in\M\bigl(\C_0(\hh{\GG})\tens\C_0(\HH)\bigr)$ generates $\C_0(\HH)$ and in the fourth equality we
used the equation $\wW^\HH_{23}\ww^{\HH}_{12}(\wW^\HH_{23})^*=\ww^{\HH}_{12}\wW^\HH_{13}$ (see derivation of formula \eqref{VVWVW}).
\end{proof}

Condition \eqref{Woronequiv3} in Theorem \ref{Woronequiv} classically corresponds to properness and freeness of the natural action of $\HH$ on $\GG$ induced by the homomorphism from $\HH$ to $\GG$, see Section \ref{commut}. It was introduced in the context of quantum groups by Podle\'s in his thesis \cite[Definicja 2.2]{podlesPhD}, see also \cite{podles} and \cite[Proposition 2.3]{exqga} for a complete discussion. Condition \eqref{Woronequiv4} is a natural reflection of a general principle that injectivity on the level of point transformations is equivalent to surjectivity on the level of induced transformations on algebras of functions (but cf.~Theorem \ref{std}\eqref{std4}). One can ask the following question: are the conditions \eqref{Woronequiv1}-\eqref{Woronequiv3} above equivalent in general to the surjectivity of the extension of $\pi$ to multiplier algebras $\bar{\pi}\colon\M\bigl(\C_0^\uu(\GG)\bigr)\to\M\bigl(\C_0^\uu(\HH)\bigr)$? The Pedersen-Tietze theorem
(\cite[Theorem 2.3.9]{W-O}) implies that $\bar{\pi}$ is surjective if $\pi$ is, provided that $\C_0^\uu(\GG)$ (and hence $\C_0^\uu(\HH)$) is $\sigma$-unital.

With the results obtained in this section in hand we are ready to give a proof of Theorem \ref{isoT}.

\begin{proof}[Proof of Theorem \ref{isoT}]
\sloppy
We are assuming that the strong quantum homomorphism $\pi\in\Mor\bigl(\C_0^\uu(\GG),\C_0^\uu(\HH)\bigr)$ is an isomorphism. Consider the dual strong quantum homomorphism $\hh{\pi}\in\Mor\bigl(\C_0^\uu(\hh{\HH}),\C_0^\uu(\hh{\GG})\bigr)$. By \eqref{pipi} $\hh{\pi}$ is an isomorphism. We will show that $\hh{\pi}$ identifies $\hh{\GG}$ with a Vaes-closed subgroup of $\hh{\HH}$. Let $\hh{V}$ be the bicharacter associated to $\hh{\pi}$:
\begin{equation*}
\hh{V}=\bigl(\Lambda_{\HH}\tens[\Lambda_{\hh{\GG}}\comp\hh{\pi}]\bigr)(\WW^{\hh{\HH}}).
\end{equation*}
As
\begin{equation*}
\wW^{\hh{\HH}}\bigl(\Lambda_{\hh{\HH}}(x)\tens\I\bigr)(\wW^{\hh{\HH}})^*=
(\Lambda_{\hh{\HH}}\tens\id)\Delta_{\hh{\HH}}^\uu(x)
\end{equation*}
for all $x\in\C_0^\uu(\hh{\HH})$ (see Footnote \ref{foot}), it follows that
\begin{equation*}
\hh{V}\bigl(\Lambda_{\hh{\HH}}(x)\tens\I\bigr)(\hh{V})^*
=\bigl(\Lambda_{\hh{\HH}}\tens[\Lambda_{\hh{\GG}}\comp\hh{\pi}]\bigr)\Delta_{\hh{\HH}}^\uu(x).
\end{equation*}
Let $Y\in\M\bigl(\K(\sH_Y)\tens\C_0(\HH)\bigr)$ be a representation of $\HH$. By the results of \cite{Johanuniv,MRW}, there exists a unique $\varphi\in\Mor\bigl(\C_0^\uu(\HH),\K(\sH)\bigr)$ such that $(\varphi\tens\Lambda_{\hh{\HH}})(\WW^{\hh{\HH}})=Y$. Set $X=(\varphi\tens\hh{\pi}^{-1})(\WW^{\hh{\HH}})$. Then $(\id\tens\Delta_{\hh{\HH}}^\uu)(X)=X_{12}X_{13}$ because $\hh{\pi}^{-1}$ intertwines the coproducts. Further
\begin{equation*}
\bigl(\id\tens\Lambda_{\hh{\HH}}\tens[\Lambda_{\hh{\GG}}\comp\pi]\bigr)(X_{12}X_{13})
=\hh{V}_{23}\bigl((\id\tens\Lambda_{\hh{\HH}})(X)\bigr)_{12}(\hh{V}_{23})^*,
\end{equation*}
so
\begin{equation*}
\bigl(\id\tens[\Lambda_{\hh{\GG}}\comp\pi]\bigr)(X)\tp{\hh{V}}=
\bigl((\id\tens\Lambda_{\hh{\HH}})(X)\bigr)_{12}^*
\hh{V}_{23}
\bigl((\id\tens\Lambda_{\hh{\HH}})(X)\bigr)_{12}
\end{equation*}
which means that $\bigl(\id\tens[\Lambda_{\hh{\GG}}\comp\pi]\bigr)(X)\tp{\hh{V}}$ is equivalent to $\II_{\sH_Y}\tp\hh{V}$. However $\bigl(\id\tens[\Lambda_{\hh{\GG}}\comp\pi]\bigr)(X)=Y$, so that, as $Y$ was arbitrary, $\hh{V}$ is right-absorbing. It follows from \cite{WorSol} (see remark before Proposition \ref{QE -> generating}) that $\hh{V}$ is quasi-equivalent to $\ww^{\hh{\HH}}$. By Theorem \ref{Vaes-equiv} $\hh{\GG}$ is a closed quantum subgroup of $\hh{\HH}$ in the sense of Vaes.

By Theorem \ref{VW1} and the comment after it, there exists a morphism $\hh{\gamma}_1\in\Mor\bigl(\C_0(\GG),\C_0(\HH)\bigr)$ such that
\begin{equation*}
\hh{\gamma}_1\comp\Lambda_{\GG}=\Lambda_{\HH}\comp\pi.
\end{equation*}
Applying identical reasoning to $\hh{\pi}^{-1}$ we obtain the existence of $\hh{\gamma}_2\in\Mor\bigl(\C_0(\HH),\C_0(\GG)\bigr)$ such that
\begin{equation*}
\hh{\gamma}_2\comp\Lambda_{\HH}=\Lambda_{\GG}\comp{\pi}^{-1}
\end{equation*}
(note that we use once again the fact that $\hh{\pi}^{-1}=\hh{\pi^{-1}}$). Since $\Lambda_{\GG}$ and $\Lambda_\HH$ are surjections we see that $\hh{\gamma}_1$ and $\hh{\gamma}_2$ are mutually inverse and we can set $\pir=\hh{\gamma}_1$.
\end{proof}

We recall from Subsection \ref{QGsub} that the Fourier algebra and the Fourier-Stieltjes algebra of a locally compact quantum group are the Banach spaces
\begin{equation*}
\cA_\GG=L^\infty(\hh{\GG})_*\qquad\text{and}\qquad\cB_\GG=\C_0^\uu(\hh{\GG})^*
\end{equation*}
which we embedded into $\C_0(\GG)$ and  $\M\bigl(\C_0^\uu(\GG)\bigr)$ respectively with the maps
\begin{equation*}
\begin{split}
\cA_\GG\ni\omega&\longmapsto(\omega\tens\id)(\ww^{\GG}),\\
\cB_\GG\ni\eta&\longmapsto(\eta\tens\id)(\WW^{\GG}).
\end{split}
\end{equation*}
We also note that $\cA_\GG$ embeds in $\cB_\GG$ via $\Lambda_{\hh{\GG}}^*\comp\imath^*$, where $\imath$ is the embedding $\C_0(\GG)\hookrightarrow{L^\infty(\GG)}$ (we will also use the symbol ``$\imath$'' to denote the analogous embedding for other quantum groups). It is easy to check that this embedding is isometric. Moreover the induced embedding of $\cA_\GG$ into $\M\bigl(\C_0^\uu(\GG)\bigr)$ actually embeds the Fourier algebra into $\C_0^\uu(\GG)$.

\begin{theorem}\label{Vaesfourier}
Let $\HH$ be a closed subgroup of $\GG$ in the sense of Woronowicz via the morphism $\pi\colon\C_0^\uu(\GG)\to\C_0^\uu(\HH)$. Then the following are equivalent:
\begin{enumerate}
\item\label{vaesfourier1} $\pi$ restricts to a map $T\colon\cA_\GG\to\cA_\HH$ which has dense range (for the $\cA_\HH$ norm);
\item\label{vaesfourier2} $\hh{\pi}^*\colon\cB_\GG\to\cB_\HH$ restricts to a map $S\colon\cA_\GG\to\cA_\HH$ which has dense range;
\item\label{vaesfourier3} $\HH$ is a closed subgroup of $\GG$ in the sense of Vaes.
\end{enumerate}
Moreover we can replace ``dense range'' by ``surjection'' in \eqref{vaesfourier1} and \eqref{vaesfourier2}.  If these conditions hold, then $S$ and $T$ are the same map, which is nothing but the pre-adjoint of the implicit map $\gamma\colon{L^\infty(\hh{\HH})}\to{L^\infty(\hh{\GG})}$ appearing in \eqref{vaesfourier3}.
\end{theorem}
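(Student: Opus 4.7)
The plan is to prove (3) $\Leftrightarrow$ (1), to unravel (1) $\Leftrightarrow$ (2) by observing that $S = T$, and to read off the ``moreover'' statement from the fact that an injective normal $*$-homomorphism of von Neumann algebras has surjective pre-adjoint. To see $S = T$, one computes that for $\eta \in \cB_\GG = \C_0^\uu(\hh{\GG})^*$ the element $\hh{\pi}^*(\eta) = \eta \comp \hh{\pi}$ embeds into $\M(\C_0^\uu(\HH))$ as
\[
(\hh{\pi}^*(\eta) \tens \id)(\WW^\HH) = (\eta \tens \id)\bigl((\hh{\pi} \tens \id)(\WW^\HH)\bigr) = (\eta \tens \id)\bigl((\id \tens \pi)(\WW^\GG)\bigr) = \bar{\pi}\bigl((\eta \tens \id)(\WW^\GG)\bigr)
\]
by \eqref{basic}. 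Consequently the restriction of $\hh{\pi}^*$ to $\cB_\GG \subset \M(\C_0^\uu(\GG))$ is just $\bar{\pi}$ on this subspace, and so $S = T$ on $\cA_\GG$. In reduced form, $T(\omega) \in \cA_\HH$ (when defined) corresponds via $\Lambda_\HH$ to $(\omega \tens \id)(V) \in \C_0(\HH)$.

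For (3) $\Rightarrow$ (1), Theorem \ref{VW1} gives $V = (\gamma \tens \id)(\ww^\HH)$, so $(\omega \tens \id)(V) = (\gamma_*\omega \tens \id)(\ww^\HH)$ identifies $T(\omega)$ with $\gamma_*(\omega) \in \cA_\HH$. The image $\gamma(L^\infty(\hh{\HH}))$ is a von Neumann subalgebra of $L^\infty(\hh{\GG})$, hence weak$^*$-closed, so the pre-adjoint $T = \gamma_*$ is automatically surjective.

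The main content is (1) $\Rightarrow$ (3). A closed-graph argument, using the relation $(T(\omega)\tens\id)(\ww^\HH) = (\omega\tens\id)(V)$ together with the injectivity of $\mu \mapsto (\mu\tens\id)(\ww^\HH)$ (which follows from Lemma \ref{easy} applied to $\hh{\HH}$), shows that $T\colon L^\infty(\hh{\GG})_* \to L^\infty(\hh{\HH})_*$ is bounded; set $\gamma := T^*\colon L^\infty(\hh\HH)\to L^\infty(\hh\GG)$, which is automatically weak$^*$-continuous, hence normal. Slicing the same relation with $\mu \in L^\infty(\HH)_*$ yields
\[
\gamma\bigl((\id \tens \mu)(\ww^\HH)\bigr) = (\id \tens \mu)(V).
\]
The bicharacter identities $(\id\tens\Delta_\HH)(\ww^\HH) = \ww^\HH_{12}\ww^\HH_{13}$ and $(\id\tens\Delta_\HH)(V) = V_{12}V_{13}$ from \eqref{DelDel} express that both $\mu \mapsto (\id\tens\mu)(\ww^\HH)$ and $\mu \mapsto (\id\tens\mu)(V)$ turn the convolution product on $L^\infty(\HH)_*$ into ordinary multiplication; hence $\gamma$ is multiplicative on the weak$^*$-dense subalgebra $\{(\id\tens\mu)(\ww^\HH) : \mu \in L^\infty(\HH)_*\} \subset L^\infty(\hh{\HH})$. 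A two-step separate weak$^*$-continuity argument then extends multiplicativity to all of $L^\infty(\hh{\HH})$; $*$-preservation is handled analogously by invoking \eqref{kap} to express adjoints of slices of $\ww^\HH$ and $V$ in a mutually compatible way. Dense range of $T$ forces $\gamma$ to be injective, and the resulting identity $(\gamma \tens \id)(\ww^\HH) = V$ is precisely condition \eqref{VW13} of Theorem \ref{VW1}, yielding (3).

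The crux is verifying that $\gamma = T^*$ is a $*$-homomorphism; in particular, the extension of multiplicativity from the dense slice-subalgebra to the full von Neumann algebra is delicate because multiplication is only separately weak$^*$-continuous. Once $\gamma$ is in place the ``moreover'' claim comes for free: $T = S = \gamma_*$ is the pre-adjoint of an injective normal $*$-homomorphism, hence surjective, upgrading ``dense range'' to ``surjection''.
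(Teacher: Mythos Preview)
Your proof is correct and follows the same overall architecture as the paper's: identify $S$ with $T$ via the duality relation \eqref{basic}, then show that the existence of such a map with dense range is equivalent to Vaes' condition by setting $\gamma=T^*$ and checking it is an injective normal $*$-homomorphism. The ``moreover'' clause comes out the same way in both proofs.

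The one substantive difference is in how $\gamma$ is shown to be a $*$-homomorphism. You work directly with the slice formula $\gamma\bigl((\id\tens\mu)(\ww^\HH)\bigr)=(\id\tens\mu)(V)$, deduce multiplicativity on $\cA_{\hh{\HH}}$ from the bicharacter identities, and handle $*$-preservation via \eqref{kap}. This is correct (the point being that for $\mu$ with $\mu^*\!\circ\kappa_\HH$ normally extendable to some $\nu$, both $\bigl((\id\tens\mu)(\ww^\HH)\bigr)^*=(\id\tens\nu)(\ww^\HH)$ and $\bigl((\id\tens\mu)(V)\bigr)^*=(\id\tens\nu)(V)$ hold with the \emph{same} $\nu$, since both $\ww^\HH$ and $V$ are representations of $\HH$), but it requires some care because $\cA_{\hh{\HH}}$ is not $*$-closed. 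The paper instead reads off the relation $\gamma\comp\Lambda_{\hh{\HH}}=\Lambda_{\hh{\GG}}\comp\hh{\pi}$ directly from $\Lambda_{\hh\HH}^*\comp\imath^*\comp{S}=\hh{\pi}^*\comp\Lambda_{\hh\GG}^*\comp\imath^*$; since the right-hand side is a $*$-homomorphism and $\Lambda_{\hh{\HH}}$ surjects onto the $*$-algebra $\C_0(\hh{\HH})$, both multiplicativity and $*$-preservation on $\C_0(\hh{\HH})$ are immediate, after which one extends by normality. The paper also obtains boundedness of $S$ for free from this relation (the embeddings $\Lambda^*\comp\imath^*$ are isometric and $\hh{\pi}^*$ is contractive), rather than via closed graph. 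Your approach makes the link with condition \eqref{VW13} of Theorem \ref{VW1} more transparent; the paper's route is shorter and sidesteps the antipode entirely.
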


\begin{proof}
Let $\omega\in{L^\infty(\hh\GG)_*}=\cA_\GG$, set $\mu=\Lambda_{\hh\GG}^*\bigl(\imath^*(\omega)\bigr)\in\C_0^\uu(\hh\GG)^*$, and let $a$ be the image of $\omega$ in $\cB_\GG\subset\M\bigl(\C_0^\uu(\GG)\bigr)$, so $a=(\mu\tens\id)(\WW^\GG)$. Then
\begin{equation*}
\pi(a)=(\mu\tens\pi)(\WW^\GG)=\bigl(\hh{\pi}^*(\mu)\tens\id\bigr)(\WW^\HH),
\end{equation*}
so that $\pi(a)$ is (the image of) $\hh{\pi}^*(\mu)$ in $\cB_\HH$. It is now clear that \eqref{vaesfourier1} and \eqref{vaesfourier2} are equivalent.

If \eqref{vaesfourier2} holds then the map $S$ satisfies $\Lambda_{\hh\HH}^*\comp\imath^*\comp{S}=\hh{\pi}^*\comp\Lambda_{\hh\GG}^*\comp\imath^*$, and as $\Lambda_{\hh\GG}^*\comp\imath^*$ and $\Lambda_{\hh\HH}^*\comp\imath^*$ are isometries, $S$ must be bounded.  Set $\gamma=S^*\colon{L^\infty(\hh{\HH})}\to{L^\infty(\hh{\GG})}$, so as $S$ has dense range, $\gamma$ is injective. Then we have that
\begin{equation*}
\gamma\comp\Lambda_{\hh\HH}=S^*\comp\Lambda_{\hh\HH}
=S^*\comp\imath^{**}\comp\bigl.\Lambda_{\hh\HH}^{**}\bigr|_{\C_0^\uu(\hh\HH)}
=\imath^{**}\comp\Lambda_{\hh\GG}^{**}\comp\bigl.\hh{\pi}^{**}\bigr|_{\C_0^\uu(\hh\HH)}
=\imath\comp\Lambda_{\hh\GG}\comp\hh{\pi}.
\end{equation*}
As $\gamma$ is weak$^*$-continuous, it now follows that $\gamma$ is a $*$-homomorphism, and so \eqref{vaesfourier3} holds.

Finally, if \eqref{vaesfourier3} holds, then we have a normal injective $*$-homomorphism $\gamma\colon{L^\infty(\hh{\HH})}\to{L^\infty(\hh{\GG})}$ with $\gamma\comp\imath\comp\Lambda_{\hh\HH}=\imath\comp\Lambda_{\hh\GG}\comp\hh{\pi}$. Thus, for $\omega\in{L^\infty(\hh\GG)_*}$, we have that
\begin{equation*}
\hh{\pi}^*\bigl((\Lambda_{\hh\GG}^*\comp\imath^*)(\omega)\bigr)=
(\Lambda_{\hh\HH}^*\comp\imath^*\comp\gamma_*)(\omega),
\end{equation*}
and so $\hh{\pi}^*$ restricts to a map $S\colon\cA_\GG\to\cA_\HH$. As $\gamma$ is injective and hence an isometry, $\gamma_*$ is a surjection and so $S$, which agrees with $\gamma_*$ once appropriate identifications are made, is also a surjection. This shows \eqref{vaesfourier2}, and also demonstrates the claim about replacing ``dense range'' by ``surjection''.
\end{proof}

\begin{remark}\label{Vf}
From Theorem \ref{Vaesfourier} we immediately see that $\HH$ is a closed subgroup of $\GG$ in the sense of Vaes, via the morphism $\pi\colon\C_0^\uu(\GG)\to\C_0^\uu(\HH)$, if and only if $\pi$ restricts to a surjection $\cA_\GG\to\cA_\HH$. In the classical case, the Herz restriction theorem (\cite{herz,arsac}) says exactly that if $H$ is a closed subgroup of $G$, then the restriction map (which is nothing but $\pi\colon\C_0(G)\to\C_0(H)$) gives a surjection $\cA_G\to\cA_H$. In other words the definition of a Vaes-closed subgroup is tailored exactly so that the quantum version of the Herz restriction theorem holds.
\end{remark}

\section{Commutative case}\label{commut}

Let now $G$ and $H$ be locally compact groups, so in particular $\C_0^\uu(G)=\C_0(G)$ and $\C_0^\uu(H)=\C_0(H)$. Any homomorphism from $H$ to $G$ (in the sense of quantum groups --- as defined in Subsection \ref{morphism}) is then described by a $\pi\in\Mor\bigl(\C_0(G),\C_0(H)\bigr)$. Moreover $\pi$ is necessarily of the form $\pi(f)=f\comp\theta$, where $\theta\colon{H}\to{G}$ is a continuous homomorphism (cf.~Theorem \ref{basic}).

Given a situation as above, consider the natural right action of $H$ on the topological space $G$ given by
\begin{equation}\label{pre_rho}
G\times{H}\ni(g,h)\longmapsto{g\cdot{h}}=g\,\theta(h)\in{G}.
\end{equation}
Let us also introduce the so called \emph{canonical map} $\gamma\colon{G\times{H}}\to{G}\times{G}$ for this action
\begin{equation}\label{Gamma}
\gamma(g,h)=(g,g\cdot{h})=\bigl(g,g\,\theta(h)\bigr).
\end{equation}
(\cite{schneider}). Let $\rho\in\Mor\bigl(\C_0(G),\C_0(G)\tens\C_0(H)\bigr)$ and $\Gamma\in\Mor\bigl(\C_0(G)\tens\C_0(G),\C_0(G)\tens\C_0(H)\bigr)$ be the morphisms of \cst-algebras corresponding to \eqref{pre_rho} and \eqref{Gamma}:
\begin{equation*}
\begin{aligned}
\rho(f)(g,h)&=f(g\cdot{h}),&f\in\C_0(G),&&g\in{G},\:h\in{H},\\
\Gamma(F)(g,h)&=F\bigl(\gamma(g,h)\bigr),&F\in\C_0(G)\tens\C_0(G),&&g\in{G},\:h\in{H}.
\end{aligned}
\end{equation*}

\begin{lemma}\label{clas}
Let $\theta\colon{H}\to{G}$ be a continuous homomorphism  with corresponding action of $H$ on $G$ as in \eqref{pre_rho} and canonical map $\gamma\colon{G}\times{H}\to{G}\times{G}$. Then the following are equivalent:
\begin{enumerate}
\item\label{clas1} $\theta$ is a homeomorphism onto its closed image;
\item\label{clas2} the action of $H$ on $G$ is free and proper i.e.~$\gamma$ is injective and proper;
\item\label{clas3} $\bb{\bigl(\rho\bigl(\C_0(G)\bigr)\bigl(\C_0(G)\tens\I_{}\bigr)}=\C_0(G)\tens\C_0(H)$.
\end{enumerate}
\end{lemma}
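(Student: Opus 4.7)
The plan is to establish $(1)\Leftrightarrow(2)$ by elementary topology of locally compact Hausdorff groups, and $(2)\Leftrightarrow(3)$ by a Stone--Weierstrass argument after identifying the left-hand side of (3) with the image of the pullback $*$-homomorphism $\Gamma$.

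For $(1)\Leftrightarrow(2)$: freeness of the right action $g\cdot h=g\theta(h)$ means all stabilisers are trivial, which is equivalent to $\ker\theta=\{e\}$, i.e.~to injectivity of $\theta$, which in turn is plainly the injectivity of $\gamma$. Properness of $\gamma$ is equivalent to properness of $\theta$: taking $K_1=\{e\}$ in the identity $\gamma^{-1}(K_1\times K_2)=\{(g,h):g\in K_1,\,g\theta(h)\in K_2\}$ shows that properness of $\gamma$ forces $\theta^{-1}(K_2)$ to be compact, and conversely $\gamma^{-1}(K_1\times K_2)$ is a closed subset of the compact set $K_1\times\theta^{-1}(K_1^{-1}K_2)$ whenever $\theta$ is proper. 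Finally, ``$\theta$ is injective and proper'' is equivalent to ``$\theta$ is a homeomorphism onto a closed subset of $G$'': any proper continuous injection between locally compact Hausdorff spaces is automatically a closed embedding, and any closed embedding of a locally compact space is manifestly injective and proper.

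For $(2)\Rightarrow(3)$: the direct computation
\begin{equation*}
\bigl(\rho(f_2)(f_1\tens\I)\bigr)(g,h)=f_1(g)f_2\bigl(g\theta(h)\bigr)=(f_1\tens f_2)\bigl(\gamma(g,h)\bigr)=\Gamma(f_1\tens f_2)(g,h),
\end{equation*}
valid for $f_1,f_2\in\C_0(G)$, identifies the left-hand side of the equation in (3) with the norm closure of $\Gamma(\C_0(G)\tens\C_0(G))$. Assuming (2), $\gamma$ is a proper continuous injection and hence a closed topological embedding, so Theorem~\ref{std}(3) gives $\Gamma\in\Mor\bigl(\C_0(G\times G),\C_0(G\times H)\bigr)$. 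The image $\Gamma(\C_0(G)\tens\C_0(G))$ is a $*$-subalgebra of $\C_0(G\times H)$ that separates the points of $G\times H$ (since $\gamma$ is injective and $\C_0(G)\tens\C_0(G)$ separates points of $G\times G$) and vanishes nowhere, so the Stone--Weierstrass theorem yields norm-density in $\C_0(G\times H)$, establishing (3).

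For $(3)\Rightarrow(2)$: the containment asserted in (3) already forces $\theta$ to be proper. Indeed, if $\theta$ failed to be proper there would exist a compact $K\subset G$ and a net $h_\alpha\to\infty$ in $H$ with $\theta(h_\alpha)\in K$; choosing $f_1,f_2\in\C_0(G)$ with $f_1(e)\neq0$ and $f_2|_K\equiv1$, the element $\rho(f_2)(f_1\tens\I)$ would take the constant value $f_1(e)$ along $\{(e,h_\alpha)\}$ and so could not belong to $\C_0(G\times H)$. Next, density in (3) forces $\gamma$ (equivalently $\theta$) to be injective: if $\theta(h_1)=\theta(h_2)$ for some $h_1\neq h_2$, every element of $\rho(\C_0(G))(\C_0(G)\tens\I)$ agrees at $(g,h_1)$ and $(g,h_2)$ for all $g$, which rules out approximating any $\phi\in\C_0(G\times H)$ separating those two points. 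Combining these observations yields (2). The main conceptual obstacle is simply bookkeeping around the several notions of properness in play --- of the canonical map $\gamma$, of the homomorphism $\theta$, and of a morphism of $\cst$-algebras having image in $\C_0$ rather than in the multiplier algebra --- but once these are aligned the argument reduces to classical topology and Stone--Weierstrass.
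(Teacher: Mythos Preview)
Your proof is correct and follows essentially the same route as the paper's. For $(1)\Leftrightarrow(2)$ both arguments reduce to the standard fact that a continuous map between locally compact Hausdorff spaces is a closed embedding if and only if it is injective and proper; for $(2)\Leftrightarrow(3)$ both proofs first identify the left-hand side of (3) with $\overline{\Gamma\bigl(\C_0(G)\tens\C_0(G)\bigr)}$. The only difference is one of presentation: where the paper dispatches the equivalence by a single appeal to the dictionary in Theorem~\ref{std}, you spell out the Stone--Weierstrass argument for $(2)\Rightarrow(3)$ and give a direct net-argument for $(3)\Rightarrow(2)$, which amounts to unpacking exactly what Theorem~\ref{std} says in this case.
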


\begin{proof}
\eqref{clas1}$\Rightarrow$\eqref{clas2}. If $\theta$ is a homeomorphism, then $\gamma$ is a homeomorphism onto its range, as for $(g,g')$ in the range of $\gamma$ (i.e.~$g'=g\cdot{h}$ for some $h\in{H}$) we have $\gamma^{-1}(g,g')=\bigl(g,\theta^{-1}(g^{-1}g')\bigr)$. Hence $\gamma$ is in particular injective and proper.

\eqref{clas2}$\Rightarrow$\eqref{clas1}.
Assume that $\gamma$ is injective and proper. Clearly $\theta$ is then injective. Similarly, if $\theta$ were not proper, then there would be a compact set $K\subset{G}$ with $\theta^{-1}(K)$ non-compact. But then $\gamma^{-1}\bigl(\{e\}\times{K}\bigr)=\{e\}\times\theta^{-1}(K)$ would not be compact either.

Hence $\theta$ is injective and proper. Proper continuous maps between locally compact spaces are automatically closed (\cite[Chapter 1, \S10]{bour}). Hence $\theta$ has a closed image, and as a bijective continuous closed map is in fact a homeomorphism.

\eqref{clas2}$\Leftrightarrow$\eqref{clas3}.
Note first that
\begin{equation*}
\bb{\bigl(\rho\bigl(\C_0(G)\bigr)\bigl(\C_0(G)\tens\I_{}\bigr)}
=\bb{\Gamma\bigl(\C_0(G)\tens\C_0(G)\bigr)}.
\end{equation*}
Hence \eqref{clas3} is equivalent to the fact that $\gamma$ is injective and proper by Theorem \ref{std}.
\end{proof}

\begin{theorem}\label{cl}
Suppose that $G$ and $H$ are locally compact groups. Then the following conditions are equivalent:
\begin{enumerate}
\item\label{cl1} $H$ is a closed quantum subgroup of $G$ in the sense of Vaes;
\item\label{cl2} $H$ is a closed quantum subgroup of $G$ in the sense of Woronowicz;
\item\label{cl3} $H$ is homeomorphic to a closed subgroup of $G$.
\end{enumerate}
\end{theorem}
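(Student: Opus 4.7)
The implication \eqref{cl1}$\Rightarrow$\eqref{cl2} is Theorem \ref{Vaes->Wor}. For \eqref{cl2}$\Rightarrow$\eqref{cl3}, observe that since $G$ and $H$ are classical (hence coamenable), any morphism $\pi\in\Mor\bigl(\C_0(G),\C_0(H)\bigr)$ intertwining the comultiplications is of the form $\pi(f)=f\comp\theta$ for a continuous group homomorphism $\theta\colon H\to G$ (Theorem \ref{std}). Woronowicz-closedness via $\pi$ then translates, via the equivalence \eqref{Woronequiv1}$\Leftrightarrow$\eqref{Woronequiv3} of Theorem \ref{Woronequiv}, into the strong non-degeneracy condition \eqref{rightqh} for the associated right quantum homomorphism $\rho$. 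In the commutative setting this is exactly condition \eqref{clas3} of Lemma \ref{clas}, so that lemma yields that $\theta$ is a homeomorphism onto a closed subgroup of $G$, establishing \eqref{cl3}.

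The substantive direction is \eqref{cl3}$\Rightarrow$\eqref{cl1}. Given a closed embedding $\theta\colon H\hookrightarrow G$ of topological groups, set $\pi(f)=f\comp\theta$ for $f\in\C_0(G)$ and let $V=(\id\tens\pi)(\ww^{G})$ be the associated bicharacter. By Theorem \ref{VW1}\eqref{VW13} it suffices to produce a normal injective $*$-homomorphism $\gamma\colon{L^\infty(\hh{H})}\to{L^\infty(\hh{G})}$ satisfying $(\gamma\tens\id)(\ww^{H})=V$. Identifying $L^\infty(\hh{G})=\vN(G)$ and $L^\infty(\hh{H})=\vN(H)$, the unitary $V\in\M\bigl(\vN(G)\tens\C_0(H)\bigr)$ is precisely the restriction $h\mapsto\lambda_G(\theta(h))$ of the left regular representation of $G$ to $H$. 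Via Theorem \ref{QE = Fourier}\eqref{QEF2}, such a $\gamma$ exists precisely when $V$, viewed as a representation of $H$, is quasi-equivalent to $\ww^{H}$, that is, when $\lambda_G\comp\theta$ is quasi-equivalent to $\lambda_H$.

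The main obstacle is thus the classical quasi-equivalence of $\lambda_G\comp\theta$ and $\lambda_H$. I would prove it by a direct integral decomposition of $L^2(G)$ over $G/H$: fix a quasi-invariant measure $\nu$ on the homogeneous space $G/H$ (and a Borel section where one exists, otherwise invoke standard disintegration, cf.~\cite{zwarich}) to obtain a unitary isomorphism
\begin{equation*}
L^2(G)\cong\int^{\oplus}_{G/H}L^2(H)\,d\nu(xH)
\end{equation*}
under which the right translation action of $H$ on $L^2(G)$ decomposes fibrewise as $\lambda_H$ twisted by a Radon--Nikodym cocycle; the cocycle does not affect the generated von Neumann algebra. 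This exhibits $\lambda_G\comp\theta$ as a direct integral of copies of $\lambda_H$, giving the desired quasi-equivalence and thus the map $\gamma$. The compatibility condition \eqref{Vaesdual} is then verified on the dense set of slices $(\id\tens\omega)(\ww^{H})$, $\omega\in\C_0(H)^*$: both sides agree by construction, using the identity $V=\bigl(\hh{\pi}\tens\id\bigr)(\ww^{H})$ coming from \eqref{basic} and coamenability of $H$, and the equality extends to all of $\vN(H)$ by normality of $\gamma$. This closes the cycle of implications.
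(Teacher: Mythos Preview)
Your proof is correct and follows the paper's approach: the implications \eqref{cl1}$\Rightarrow$\eqref{cl2} and \eqref{cl2}$\Leftrightarrow$\eqref{cl3} are handled identically (via Theorem~\ref{Vaes->Wor}, Theorem~\ref{Woronequiv} and Lemma~\ref{clas}), and for \eqref{cl3}$\Rightarrow$\eqref{cl1} the paper first cites \cite{zwarich,arsac} but then gives its own explicit argument via Kehlet's locally bounded Baire cross-sections \cite{kehlet}, constructing a unitary $L^2(G)\to L^2(G/H)\tens L^2(H)$ that conjugates $R^G_{h'}$ to $\I\tens R_{h'}$ --- this is exactly a concrete realization of your direct-integral decomposition, with the $\rho$-function and modular functions tracked explicitly so that your ``Radon--Nikodym cocycle'' is seen to be a scalar in $h$ and hence harmless. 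One minor slip: you oscillate between the left regular representation $\lambda_G$ and the ``right translation action'' mid-argument; the paper works consistently with right translations (in keeping with its conventions), but this is immaterial since the two regular representations are unitarily equivalent.
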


\begin{proof}
Condition \eqref{cl1} implies \eqref{cl2} by Theorem \ref{Vaes->Wor}. Conditions \eqref{cl2} and \eqref{cl3} are equivalent by Lemma \ref{clas} and Theorem \ref{Woronequiv}. It remains to note that \eqref{cl3} implies \eqref{cl1}. By theorem \ref{VW1} this is precisely \cite[Corollary 4.2.6]{zwarich} (which is a consequence of \cite[Theorem (3.23)]{arsac}).
\end{proof}

\begin{remark}
\noindent
\begin{enumerate}
\item Let $G$ be a locally compact group and let $\HH$ be a locally compact quantum group. If $\HH$ is a closed subgroup of $G$ in the sense of Woronowicz then by Theorem \ref{Woronequiv} there is a surjection from $\C_0(G)$ onto $\C_0(\HH)$, so that $\HH$ is in fact a classical group. By Theorem \ref{cl}, $\HH$ is then also a closed subgroup of $G$ in the usual sense.
\item Let $H$ be a locally compact group and let $\GG$ be a locally compact quantum group. If $H$ is a closed subgroup of $\GG$ in the sense of Woronowicz then the associated morphism $\pi\colon\C_0^\uu(\GG)\to\C_0(H)$ factors through the algebra $\C_0(\tilde{\GG})$, where $\tilde{\GG}$ is the \emph{intrinsic group} of $\GG$ as defined by Kalantar and Neufang (a locally compact group associated to $\GG$, see \cite{kn}). It follows from Theorem \ref{Woronequiv} that $H$ is a closed subgroup of $\tilde{\GG}$ (again in the usual sense).
\end{enumerate}
\end{remark}

Theorem \ref{Vaesfourier} shows that the existence of an injective normal $*$-homomorphism from $\vN(H)$ to $\vN(G)$ is naturally very closely related to the Herz restriction theorem (cf.~Remark \ref{Vf}). To analyze the situation closer assume that $H$ is a closed subgroup of a locally compact group $G$ and consider the following statements:

\begin{enumerate}
\item\label{r1} the restriction map from $\C_0(G)$ to $\C_0(H)$ yields a surjective map from $\cA_G$ to $\cA_H$;
\item\label{r2} the prescription $\lambda_h\mapsto\lambda_h^{(G)}$, $h\in{H}$ (where $\lambda_h$ denotes the (unitary) left shift by $h$ on $L^2(H)$ and $\lambda_h^{(G)}$ the corresponding (unitary) left shift by $h$ on $L^2(G)$) extends to a normal injective $*$-homomorphism from $\vN(H)$ to $\vN(G)$;
\item\label{r3} the restriction of the left regular representation of $G$ to $H$ is quasi-equivalent to the left regular representation of $H$.
\end{enumerate}

It is very easy to see that they are all logically equivalent (\eqref{r2} is essentially the definition of quasi-equivalence in \eqref{r3}, and the equivalence of \eqref{r1} and \eqref{r2} follows from a basic functional analytic argument, appearing already in \cite[Section 0]{h2}). The first condition is the Herz restriction theorem. The third one can be viewed as a statement related to the theory of induced representations, and the induction-restriction procedure, as \cite[Theorem 4.2]{Mackey} states that the left regular representation of $G$ is the induction of the left regular representation of $H$. Interestingly, we could not locate an explicit statement of the condition \eqref{r3} in literature. In the remainder of this section we will give an alternative proof of the implication \eqref{cl3}$\Rightarrow$\eqref{cl1} in Theorem \ref{cl}. In particular this gives a new proof of Herz restriction theorem (cf.~Remark \ref{Vf}). Our reasoning is based on existence of
locally Baire cross-sections for the canonical projection $G\to{G/H}$ (\cite{kehlet}).

Following the notation of \cite[Section 4]{kehlet} we let $q\colon{G/H}\to{G}$ be a locally bounded Baire cross-section to the canonical quotient map $G\to{G/H}$. We denote by $\Phi$ the bijection
\begin{equation*}
(G/H)\times{H}\ni\bigl([g],h\bigr)\longmapsto{q([g])h}\in{G}.
\end{equation*}

Let $\mu$ and $\beta$ be Haar measures on $G$ and $H$ respectively and let $\lambda$ be a quasi-invariant measure on $G/H$ with associated $\rho$-function $\rho$ (\cite[Section 2.6]{foll}). In \cite[Section 4]{kehlet} E.T.~Kehlet shows that
\begin{equation}\label{nazwa}
\psi\longmapsto\rho^{-\frac{1}{2}}\cdot\psi\comp\Phi
\end{equation}
is a unitary map $L^2(G,\mu)\to{L^2\bigl((G/H)\times{H},\lambda\times\beta\bigr)}$. We note that
\begin{equation}\label{hhp}
\Phi\bigl([g],h\bigr)h'=\Phi\bigl([g],hh'\bigr),
\end{equation}
and the function $\rho$ satisfies
\begin{equation}\label{rhorho}
\rho(gh')=\tfrac{\Delta_H(h')}{\Delta_G(h')}\rho(g)
\end{equation}
(\cite{foll}).

We identify $L^2(G/H,\lambda)\tens{L^2(H)}$ with $L^2\bigl((G/H)\times{H},\lambda\times\beta\bigr)$ in the usual way (the respective measures are regular) and define a unitary $T\colon{L^2(G/H,\lambda)}\tens{L^2(H)}\to{L^2(G)}$ as the inverse of \eqref{nazwa}, i.e.
\begin{equation*}
(T\psi)(g)=\rho(g)^{\frac{1}{2}}\psi\bigl(\Phi^{-1}(g)\bigr).
\end{equation*}
For $h'\in{H}$ let $R_{h'}$ be the unitarized operator of right translation by $h'$ on $L^2(H)$ and let $R_{h'}^G$ denote the operator of right translation by $h'$ on $L^2(G)$. Fix $g\in{G}$ and let $\Phi^{-1}(g)=\bigl([g_0],h_0\bigr)$. Taking into account \eqref{hhp} and \eqref{rhorho} we compute
\begin{equation*}
\begin{split}
\Bigl(\bigl(T(\I\tens{R_{h'}})T^*\bigr)\psi\Bigr)(g)
&=\rho(g)^{\frac{1}{2}}
\Bigl(\bigl((\I\tens{R_{h'}})T^*\bigr)\psi\Bigr)\bigl(\Phi^{-1}(g)\bigr)\\
&=\rho(g)^{\frac{1}{2}}\Delta_H(h')^{\frac{1}{2}}(T^*\psi)\bigl([g_0],h_0h'\bigr)\\
&=\rho(g)^{\frac{1}{2}}\Delta_H(h')^{\frac{1}{2}}
\rho\bigl(\Phi\bigl([g_0],h_0h'\bigr)\bigr)^{-\frac{1}{2}}\psi\bigl(\Phi\bigl([g_0],h_0h'\bigr)\bigr)\\
&=\rho(g)^{\frac{1}{2}}\Delta_H(h')^{\frac{1}{2}}
\rho(gh')^{-\frac{1}{2}}\psi(gh')\\
&=\rho(g)^{\frac{1}{2}}\Delta_H(h')^{\frac{1}{2}}
\left(\tfrac{\Delta_H(h')}{\Delta_G(h')}\right)^{-\frac{1}{2}}\rho(g)^{-\frac{1}{2}}\psi(gh')\\
&=\Delta_G(h')^{\frac{1}{2}}\psi(gh')=(R^G_{h'}\psi)(g).
\end{split}
\end{equation*}

Thus $T(\I\tens{R_{h'}})T^*=R^G_{h'}$. This means that the (right) group von Neumann algebra $\vN(H)$ is isomorphic to the von Neumann subalgebra of $\vN(G)$ generated by the right shifts on $G$ by elements from the subgroup $H$. This embedding is the map $\gamma$ from Definition \ref{Vsubgroup}. In particular $H$ is a closed subgroup of $G$ in the sense of Vaes.

\section{Cocommutative case}\label{cocommut}

Let again $G$ and $H$ be locally compact groups. Recall that in this case the dual locally compact quantum groups $\hh{G}$ and $\hh{H}$ of $G$ and $H$ are respectively defined by putting $\C_0(\hh{G})=\cst_r(G)$, $\C_0(\hh{H})=\cst_r(H)$. We have the following result.

\begin{theorem}\label{ccthm}
Let $\pi$ be a morphism from $\hh{H}$ to $\hh{G}$ and let, as usual, $\hh{\pi}$ denote the dual morphism from $G$ to $H$, so that
\begin{equation*}
\hh{\pi}\colon\C_0(H)\ni{f}\longmapsto{f\comp\theta}\in\M\bigl(\C_0(G)\bigr),\qquad(f\in\C_0(H))
\end{equation*}
for some continuous homomorphism $\theta:G \to H$. Then the following conditions are equivalent:
\begin{enumerate}
\item\label{cc:vaes} $\hh{H}$ is a closed quantum subgroup of $\hh{G}$ in the sense of Vaes (via the morphism $\pi$);
\item\label{cc:woro} $\hh{H}$ is a closed quantum subgroup of $\hh{G}$ in the sense of Woronowicz (via the morphism $\pi$);
\item\label{cc:quot} $\theta$ maps $G$ onto $H$ and the induced map $\tilde{\theta}:G/_{\ker{\theta}}\to{H}$ is a homeomorphism.
\end{enumerate}
\end{theorem}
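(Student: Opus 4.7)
The plan is to establish the cycle $(3) \Rightarrow (1) \Rightarrow (2) \Rightarrow (3)$. The step $(1) \Rightarrow (2)$ is immediate from Theorem \ref{Vaes->Wor}. For $(3) \Rightarrow (1)$, write $N = \ker\theta$, so that by hypothesis $\theta$ factors as $G \twoheadrightarrow G/N \xrightarrow{\sim} H$. I would define $\gamma\colon L^{\infty}(H) \to L^{\infty}(G)$ by $\gamma(f) = f \comp \theta$. Using Weil's integration formula for the quotient $G/N$, $\mu_H$-null sets pull back to $\mu_G$-null sets and conversely, so $\gamma$ is a well-defined normal injective $*$-homomorphism. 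Since $G$ and $H$ are classical, they are coamenable, so the reducing maps $\Lambda_{\hh{\hh H}}$ and $\Lambda_{\hh{\hh G}}$ are identities; and $\gamma$ restricted to $\C_0(H)$ coincides with $\hh{\pi}\colon f \mapsto f \comp \theta$. Hence \eqref{Vaesdual} holds and $\gamma$ witnesses (1) via Definition \ref{Vsubgroup}.

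The heart of the proof is $(2) \Rightarrow (3)$. By Theorem \ref{Woronequiv}\eqref{Woronequiv4}, Woronowicz's condition becomes $\pi(\C^*(G)) = \C^*(H)$, i.e., $\pi\colon \C^*(G) \to \C^*(H)$ is a surjective $*$-homomorphism whose image lies in $\C^*(H)$ itself (not merely in $\M(\C^*(H))$). Taking Banach space duals gives an injective map $\pi^*\colon \cB_H \to \cB_G$, $\phi \mapsto \phi \comp \theta$. By Gelfand-Raikov, $\cB_H$ separates points of $H$; hence $\phi \comp \theta = 0$ implies $\phi \equiv 0$ on $\overline{\theta(G)}$ and then $\phi = 0$, so $\theta(G)$ is dense in $H$. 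The containment $\pi(\C^*(G)) \subseteq \C^*(H)$ encodes a regularity condition: the reduction $(\Lambda_{\hh H} \comp \pi)(f) = \lambda_H(\theta_*f)$ belongs to $\C^*_r(H)$ for every $f \in L^1(G)$, forcing the pushforward measure $\theta_* f$ to be sufficiently regular. A standard integration-theoretic argument --- using that a subgroup of a locally compact group with positive Haar measure must be open --- combined with the density of $\theta(G)$, forces $\theta(G) = H$ with $\theta$ an open surjection. The induced continuous bijection $\tilde\theta\colon G/\ker\theta \to H$ is then a homeomorphism between locally compact Hausdorff spaces.

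The main obstacle is the operator-algebra-to-topology step of $(2) \Rightarrow (3)$: deducing openness of $\theta$ from the containment $\pi(\C^*(G)) \subseteq \C^*(H)$, especially since there exist singular measures on $H$ (e.g., Rajchman measures on $\RR$) whose convolution action lies in $\C^*_r(H)$, so that the passage from ``$\lambda_H(\theta_*f) \in \C^*_r(H)$'' to ``$\theta_*f$ absolutely continuous'' is not entirely formal. A cleaner alternative, likely used by the authors, is to invoke the Ilie-Stokke characterization of weak$^*$-continuous homomorphisms between Fourier-Stieltjes algebras (\cite{is}), under which surjectivity of $\pi$ translates directly into $\theta$ being a quotient map in the sense of (3).
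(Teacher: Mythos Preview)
Your cycle $(3)\Rightarrow(1)\Rightarrow(2)\Rightarrow(3)$ matches the paper's, and your arguments for $(1)\Rightarrow(2)$ and for $(3)\Rightarrow(1)$ (defining $\gamma(f)=f\comp\theta$ and justifying normality and injectivity via Weil's integration formula for $G/\ker\theta$) are essentially the same as the authors'.

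For $(2)\Rightarrow(3)$ there are two issues. First, the density step: Gelfand--Raikov gives only that $\cB_H$ separates the points of $H$, which is not what you need. From injectivity of $\pi^*$ you know that no non-zero $b\in\cB_H$ vanishes on $\theta(G)$; to conclude $\bb{\theta(G)}=H$ from this you need \emph{regularity} of $\cA_H$ (hence of $\cB_H$) as a function algebra on $H$, so that any proper closed subset supports a non-zero element of $\cB_H$ vanishing on it. This is exactly what the paper invokes (citing \cite[Lemme~3.2]{eymard}).

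Second, and more substantially, the openness of $\theta$ is indeed the crux, and your own ``integration-theoretic'' sketch does not close the gap---as you yourself observe, the passage from $\lambda_H(\theta_*f)\in\cst_r(H)$ to absolute continuity of $\theta_*f$ is obstructed by Rajchman-type phenomena. Your guess about the fix is exactly right: the paper invokes \cite[Lemma~4.2]{is}. The key observation is that surjectivity of $\pi$ makes $\pi^*\colon\cB_H\to\cB_G$ weak$^*$-weak$^*$-continuous (since $\cB_H=\cst(H)^*$ and $\cB_G=\cst(G)^*$ and $\pi^*$ is an honest adjoint), and Ilie--Stokke's lemma then forces the corestriction $\theta_0\colon G\to\bb{\theta(G)}$ to be an open surjection. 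Combined with $\bb{\theta(G)}=H$ from the density step, this gives \eqref{cc:quot}. So the missing ingredient in your argument is precisely the weak$^*$-continuity of $\pi^*$ together with the Ilie--Stokke result; the pushforward-measure route you sketch does not work as stated.
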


\begin{proof}
That \eqref{cc:vaes}$\Rightarrow$\eqref{cc:woro} is Theorem \ref{Vaes->Wor}.

Suppose that \eqref{cc:woro} holds, so that the morphism
$\pi\colon\cst(G)\to\cst(H)$ maps to $\cst(H)$ and is surjective. Since $G$ and $H$
are classical groups, the algebras $\cA_H$, $\cB_H$, $\cA_G$ and $\cB_G$
(as defined in Subsection \ref{QGsub}) are the classical Fourier and
Fourier-Stieltjes algebras of $H$ and $G$ respectively. In particular we have that
$\cB_H=\cst(H)^*$, and similarly $\cB_G=\cst(G)^*$. Thus
$\pi^*\colon\cB_H\to\cB_G$ is weak$^*$-weak$^*$-continuous. Let $G_0$ be the
closure of the image of $\theta$ in $H$, and let $\theta_0\colon{G}\to{G_0}$ be
the corestriction of $\theta$. By \cite[Lemma~4.2]{is} it follows that $\theta_0$
is an open surjection. We claim that $G_0=H$, from which \eqref{cc:quot} will
follow. Indeed, if $G_0\not=H$ then as $\cA_H$ (and hence also $\cB_H$) is a
\emph{regular} algebra of functions on $H$ (see \cite[Lemme~3.2]{eymard} or
\cite[Proposition~4.1.8]{zwarich}) we can find a non-zero $b\in\cB_H$ with
$b(s)=0$ for all $s\in{G_0}$.
As a map between function algebras, $\pi^*$ is simply $\pi^*(b)=b\comp\theta$, and
so $\pi^*(b)=0$. However, as $\pi$ is surjection, $\pi^*$ is an isometry, and so $\pi^*(b)\not=0$, a contradiction. Thus $G_0=H$ as required.

If \eqref{cc:quot} holds then as both $K=\ker{\theta}$ and $G/K$ are locally compact groups in their own right, they carry Haar measures, which we may normalize so that the Weyl formula holds: for $f\in\C_{00}(G)$,
\begin{equation*}
\int\limits_G{f(s)}\,ds=\int\limits_{G/K}\int\limits_K{f(st)}\,dt\,d(sK).
\end{equation*}
It is not hard to see that the map
\begin{equation*}
I_K\colon{L^1(G)}\ni{f}\longmapsto\int\limits_{K}f(st)\,dt\in{L^1(G/K)}
\end{equation*}
is an algebra homomorphism and a metric surjection; see \cite[Section~1.9.12]{palmer} for example. We notice that then $I_K^*\colon{L^\infty(G/K)}\to{L^\infty(G)}$ is an injective normal $*$-homomorphism which intertwines the coproducts. As $G/K$ is homeomorphic to $H$, the Haar measures on $H$ and on $G/K$ are proportional, and so the map
\begin{equation*}
\gamma_0\colon
L^\infty(H)\ni{F}\longmapsto{F}\comp\tilde{\theta}\in{L^\infty(G/K)}
\end{equation*}
is well-defined, and is hence a normal $*$-isomorphism which intertwines the coproduct. Then set $\gamma=I_K^*\comp\gamma_0\colon{L^\infty(H)}\to{L^\infty(G)}$. So $\gamma$ is an injective normal $*$-homomorphism which intertwines the coproducts, and a simple check shows that $\bigl.\gamma\bigr|_{\C_0(H)}=\hh\pi$, so \eqref{cc:vaes} holds.
\end{proof}

\begin{remark}
Let $G$ be a locally compact group and let $\HH$ be a locally compact quantum group. If $\HH$ is a closed subgroup of $\hh{G}$ in the sense of Woronowicz then $\pi\colon\cst(G)\to\C_0^\uu(\HH)$ is a surjection, and so $\C_0^\uu(\HH)$ is cocommutative, hence of the form $\cst(H)$ for some $H$, and it then follows that $H$ is a quotient of $G$.
\end{remark}

Let us give some indications of how the proof of \cite[Lemma~4.2]{is} proceeds. Firstly, arguing as in the proof of \eqref{cc:quot}$\Rightarrow$\eqref{cc:vaes} above, it is not hard to reduce the problem to the case when $\theta$ is an injection. The key result is then \cite[Theorem~1.3]{bls} which tells us that $\pi^*(\cB_H)$ contains $\cA_G$ (as it is a weak$^*$-closed, conjugate closed, $\cst(G)$-module which, as a space of functions on $G$, separates the points of $G$; this final claim uses the assumption that $\theta$ is injective).

These ideas can be readily generalized to the setting of locally compact quantum groups.

\begin{proposition}\label{ISprop}
Let $\GG$ and $\HH$ be locally compact quantum groups and let $\pi\colon\C_0^\uu(\GG)\to\C_0^\uu(\HH)$ be a strong quantum homomorphism identifying $\HH$ as a Woronowicz-closed subgroup of $\GG$. Furthermore, suppose that $\pi^*(\cB_\HH)$ contains the image of $\cA_\GG$ under the map $\Lambda_{\GG}^*$. Then $\GG$ and $\HH$ are isomorphic.
\end{proposition}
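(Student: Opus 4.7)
The aim is to show $\pi$ is an isomorphism; Theorem \ref{isoT} then yields $\GG \cong \HH$. Since $\HH$ is a Woronowicz-closed subgroup via $\pi$, Theorem \ref{Woronequiv} gives $\pi(\C_0^\uu(\GG)) = \C_0^\uu(\HH)$, so $\pi$ is surjective and only injectivity remains to be established. The strategy is to convert the hypothesis on Fourier--Stieltjes algebras into a statement at the level of the dual von Neumann algebras $L^\infty(\hh\HH)$ and $L^\infty(\hh\GG)$, first producing an injective normal $*$-homomorphism $\gamma$ (so that $\HH$ is Vaes-closed) and then forcing $\gamma$ to be an isomorphism.

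The first step would upgrade Woronowicz-closedness to Vaes-closedness via Theorem \ref{Vaesfourier}. The hypothesis $\pi^*(\cB_\HH) \supset \Lambda_\GG^*(\cA_\GG)$ says that every element of the embedded Fourier algebra $\Lambda_\GG^*(\cA_\GG) \subset \cB_\GG$ lies in the image of the natural map $\pi^*\colon \cB_\HH \to \cB_\GG$; combined with the surjectivity of $\pi$, this is at least as strong as condition \eqref{vaesfourier2} of Theorem \ref{Vaesfourier}, so I obtain a normal injective $*$-homomorphism $\gamma\colon L^\infty(\hh\HH) \to L^\infty(\hh\GG)$ intertwining the coproducts, with pre-adjoint $\gamma_*\colon \cA_\GG \to \cA_\HH$ a surjection agreeing with $\pi|_{\cA_\GG}$.

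The crux of the proof is to show that $\gamma$ is also surjective. Equivalently the pre-adjoint $\gamma_*$ must be injective. Here I would use the full inclusion hypothesis (which is genuinely stronger than weak$^*$-density): for each $\omega \in \cA_\GG$ the functional $\Lambda_\GG^*(\omega)$ factors as $\pi^*(b)$ for some $b \in \cB_\HH$, and if $\pi(\omega) = 0$ then this factorization, paired against the appropriate lifts through $\pi$, forces $\omega = 0$. With $\gamma$ thus promoted to a $*$-isomorphism of von Neumann algebras intertwining coproducts, I would restrict to the reduced \cst-algebras $\C_0(\hh\HH) \cong \C_0(\hh\GG)$ and transfer back to the universal level (mirroring the final part of the proof of Theorem \ref{isoT}) to conclude that $\pi$ itself is an isomorphism, whence Theorem \ref{isoT} delivers $\GG \cong \HH$.

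The main obstacle will be the injectivity of $\gamma_*$ in the central step. In the classical case this follows from regularity of the Fourier algebra as in \cite[Theorem~1.3]{bls}; in the quantum setting the absence of underlying points means the separation argument must be phrased entirely in terms of duality between $L^\infty(\hh\GG)$ and $\cA_\GG$, and between $\C_0^\uu(\hh\HH)$ and $\cB_\HH$. Translating this classical density/separation mechanism into a clean duality-theoretic statement, using only the module structure of $\cB_\GG, \cB_\HH$ and the intertwining relation $(\id \tens \pi)(\WW^\GG) = (\hh\pi \tens \id)(\WW^\HH)$ from \eqref{basic}, is where the main technical care is required.
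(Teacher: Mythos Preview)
Your strategy diverges from the paper's at the first substantive step, and the divergence is based on a misreading of what the hypothesis provides. You claim that $\pi^*(\cB_\HH)\supset\Lambda_\GG^*(\cA_\GG)$, combined with surjectivity of $\pi$, is ``at least as strong as condition \eqref{vaesfourier2} of Theorem \ref{Vaesfourier}''. But condition \eqref{vaesfourier2} asks that $\hh\pi^*\colon\cB_\GG\to\cB_\HH$ restrict to a map $\cA_\GG\to\cA_\HH$ with dense range; the hypothesis here concerns $\pi^*\colon\cB_\HH\to\cB_\GG$ and asserts that its \emph{image} contains $\cA_\GG$. These two statements point in opposite directions and neither formally implies the other, so your passage to Vaes-closedness via Theorem \ref{Vaesfourier} is unjustified. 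Everything downstream (the map $\gamma$, the question of injectivity of $\gamma_*$) then rests on nothing, and indeed you end by conceding that this central step remains an ``obstacle''.

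The paper's proof does not pass through Vaes-closedness or the dual von Neumann algebras at all. It reads the hypothesis directly: since $\pi$ is onto, $\pi^*$ is an isometry onto its range, so the containment $\Lambda_\GG^*(\cA_\GG)\subset\pi^*(\cB_\HH)$ yields an isometric map $\phi\colon L^\infty(\GG)_*\to\C_0^\uu(\HH)^*$ with $\pi^*\comp\phi=\Lambda_\GG^*$. Restricting the adjoint $\phi^*$ to $\C_0^\uu(\HH)$ gives $\psi\colon\C_0^\uu(\HH)\to L^\infty(\GG)$ satisfying $\psi\comp\pi=\Lambda_\GG$; surjectivity of $\pi$ forces $\psi$ to be a $*$-homomorphism intertwining coproducts. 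By \cite[Section 4]{MRW} there is a strong quantum homomorphism $\psi_0\colon\C_0^\uu(\HH)\to\C_0^\uu(\GG)$ with $\Lambda_\GG\comp\psi_0=\psi$, hence $\Lambda_\GG\comp\psi_0\comp\pi=\Lambda_\GG$, and \cite[Lemma 4.13]{MRW} (uniqueness of the universal lift of a bicharacter) then gives $\psi_0\comp\pi=\id$. Thus $\pi$ has a left inverse, is injective, and Theorem \ref{isoT} finishes. The point is that the hypothesis is tailor-made to factor $\Lambda_\GG$ through $\pi$, not to produce a $\gamma$ on the dual side.
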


\begin{proof}
As $\pi$ is onto, $\pi^*$ is an isometry onto its range, and so there is an isometric map $\phi\colon{L^\infty(\GG)_*}\to\C_0^\uu(\HH)^*$ with $\pi^*\comp\,\phi=\Lambda_{\GG}^*$; clearly $\phi$ is a Banach algebra homomorphism. Let
\begin{equation*}
\psi=\bigl.\phi^*\bigr|_{\C_0^\uu(\HH)}\colon\C_0^\uu(\HH)\longrightarrow{L^\infty(\GG)}.
\end{equation*}
Then $\psi\comp\pi=\phi^*\comp\bigl.\pi^{**}\bigr|_{\C_0^\uu(\GG)}=\bigl.\Lambda_{\GG}^{**}\bigr|_{\C_0^\uu(\GG)} =\Lambda_{\GG}$, so we have the diagram
\begin{equation*}
\xymatrix{\C_0^\uu(\GG)\ar@{->>}[d]^{\Lambda_{\GG}}\ar@{->>}[r]^\pi&\C_0^\uu(\HH)\ar[ld]^\psi\\\C_0(\GG)}
\end{equation*}
As $\pi$ is onto, it follows that $\psi$ is a $*$-homomorphism. Therefore it follows easily that $\psi$ intertwines the coproducts. From the results of \cite[Section 4]{MRW} there is a strong quantum homomorphism $\psi_0\colon\C_0^\uu(\HH)\to\C_0^\uu(\GG)$ with $\Lambda_{\GG}\comp\psi_0=\psi$. Thus $\Lambda_{\GG}\comp\psi_0\comp\pi=\Lambda_{\GG}$. By passing to bicharacters and applying \cite[Lemma~4.13]{MRW} it follows that $\psi_0\comp\pi$ is the identity on $\C_0^\uu(\GG)$. In particular, $\pi$ must be injective, and so an isomorphism. Thus the quantum groups $\GG$ and $\HH$ are isomorphic by Theorem \ref{isoT}.
\end{proof}

\section{Compact and discrete cases}\label{cptdiscr}

In this section we establish the equivalence of Definitions \ref{Vsubgroup} and \ref{Wsubgroup} when a potential quantum subgroup is compact (Theorem \ref{cpt_sbgrp}) and when the ``larger'' quantum group is discrete (Theorem \ref{discrThm}). The first of these results shows in particular that if both quantum groups in question are compact, the definitions studied in this paper coincide with the one currently adopted in literature (see \cite{BB}, \cite{nchom}, etc.); the second can be thought of as the generalization of the Herz restriction theorem to the context of discrete quantum groups.

\subsection{Compact subgroups}\label{cptsbgrp}

Let $\GG$ and $\HH$ be locally compact quantum groups and assume further that $\HH$ is compact. We will show in Theorem \ref{cpt_sbgrp} that $\HH$ is a closed subgroup of $\GG$ in the sense of Vaes if and only if it is a closed subgroup of $\GG$ in the sense of Woronowicz. However, before proceeding with this theorem let us make the following observation: consider a homomorphism from $\HH$ to $\GG$ described by $\pi\in\Mor\bigl(\C_0^\uu(\GG),\C^\uu(\HH)\bigr)$ (remember that $\HH$ is compact). Based on Theorem \ref{std}\eqref{std4} one could define injectivity of the homomorphism from $\HH$ to $\GG$ as the property that the range of $\pi$ is strictly dense in $\M\bigl(\C^\uu(\HH)\bigr)$. But $\C^\uu(\HH)$ is unital, so strict density of the range of $\pi$ is equivalent to its norm-density. Moreover, since the image of a \cst-algebra under a $*$-homomorphisms is closed, $\pi$ must be a surjection. By Theorem \ref{Woronequiv} this means that $\HH$ is a closed subgroup of $\GG$
in the sense of Woronowicz. In other words, the above argument shows that a compact quantum group $\HH$ with an injective homomorphism into $\GG$ is automatically a closed subgroup of $\GG$ in the sense of Woronowicz (thus by Theorem \ref{cpt_sbgrp} it is also closed in the sense of Vaes). In particular the notion of a quantum subgroup used e.g.~in \cite[Sections 4 and 5]{exqga}, \cite{pa}, \cite{nchom} is identical to those given in Definitions \ref{Vsubgroup} and \ref{Wsubgroup}.

Before proceeding let us also quickly note that a Woronowicz-closed subgroup of a compact quantum group is automatically compact (so, by Theorem \ref{cpt_sbgrp} it is also Vaes-closed). The reason for this is that a quotient of a unital \cst-algebra is obviously unital (cf.~Theorem \ref{Woronequiv}\eqref{Woronequiv4}).

\begin{theorem}\label{cpt_sbgrp}
Let $\HH$ be a closed subgroup of $\GG$ in the sense of Woronowicz and assume that $\HH$ is compact. Then $\HH$ is a closed subgroup of $\GG$ in the sense of Vaes.
\end{theorem}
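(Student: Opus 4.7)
The plan is to apply Theorem~\ref{VW1}\eqref{VW12}: I will construct a normal injective $*$-homomorphism $\gamma\colon L^\infty(\hh{\HH})\to L^\infty(\hh{\GG})$ intertwining the comultiplications. Since $\HH$ is compact, $\hh{\HH}$ is discrete and coamenable, so $\C_0^\uu(\hh{\HH})=\C_0(\hh{\HH})=\cc_0(\hh{\HH})$ is a $\cc_0$-direct sum of matrix algebras $M_{n_\alpha}$ indexed by $\alpha\in\mathrm{Irr}(\HH)$ with minimal central projections $e_\alpha$, and $L^\infty(\hh{\HH})=\ell^\infty(\hh{\HH})=\prod_\alpha M_{n_\alpha}$. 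The map I plan to extend is $\gamma_0=\Lambda_{\hh{\GG}}\comp\hh{\pi}\in\Mor\bigl(\cc_0(\hh{\HH}),\C_0(\hh{\GG})\bigr)$. Decomposing $\ww^{\HH}$ block-wise as $\prod_\alpha u^\alpha$ (where $u^\alpha$ is the irreducible unitary representation of $\HH$ labelled by $\alpha$), formula~\eqref{this} together with $\Lambda_{\hh{\HH}}=\id$ yields $V=(\gamma_0\tens\id)(\ww^{\HH})$, so the $\alpha$-block of $V$ equals $(\gamma_0|_{M_{n_\alpha}}\tens\id)(u^\alpha)$ and vanishes precisely when $\gamma_0(e_\alpha)=0$.

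The crux is to show that $\mathcal{F}=\{\alpha\in\mathrm{Irr}(\HH):\gamma_0(e_\alpha)\neq 0\}$ equals the whole of $\mathrm{Irr}(\HH)$. Since $M_{n_\alpha}$ is simple, each restriction $\gamma_0|_{M_{n_\alpha}}$ is either zero or injective, so membership in $\mathcal{F}$ is equivalent to injectivity of the corresponding block-restriction. Now $\gamma_0$ intertwines the comultiplications (as a composition of coproduct-intertwining maps), and the $(\beta,\gamma)$-block of $\Delta_{\hh{\HH}}(e_\alpha)$ is the projection in $M_{n_\beta}\tens M_{n_\gamma}$ onto the $u^\alpha$-isotype of $u^\beta\tens u^\gamma$, which is non-zero exactly when $\alpha\subset u^\beta\tens u^\gamma$. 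Applying the injective map $\gamma_0|_{M_{n_\beta}}\tens\gamma_0|_{M_{n_\gamma}}$ (valid for $\beta,\gamma\in\mathcal{F}$) therefore forces $\gamma_0(e_\alpha)\neq 0$, so $\mathcal{F}$ is closed under irreducible sub-representations of tensor products of pairs of its elements. The bounded unitary antipode of $\hh{\HH}$, which is also intertwined by $\gamma_0$, acts on minimal central projections by $R_{\hh{\HH}}(e_\alpha)=e_{\bar\alpha}$, giving closure of $\mathcal{F}$ under contragredients. By Theorem~\ref{Woronequiv}\eqref{Woronequiv2} the slices $(\omega\tens\id)(V)$ for $\omega\in\B(L^2(\GG))_*$ generate $\C(\HH)$ as a \cst-algebra; these slices lie in the span of matrix coefficients of $\{u^\alpha\}_{\alpha\in\mathcal{F}}$, and the two closure properties ensure that the tensor category built from $\mathcal{F}$ stays inside $\mathcal{F}$. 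Peter--Weyl orthogonality then forces $\mathcal{F}=\mathrm{Irr}(\HH)$.

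Once this is established, set $q_\alpha=\gamma_0(e_\alpha)$. By the non-degeneracy of $\gamma_0$, the family $(q_\alpha)_\alpha$ consists of mutually orthogonal non-zero projections in $\M(\C_0(\hh{\GG}))\subseteq L^\infty(\hh{\GG})$ summing to $\I$ strictly, hence in the strong operator topology on $L^2(\GG)$. On each component of the resulting orthogonal decomposition $L^2(\GG)=\bigoplus_\alpha q_\alpha L^2(\GG)$, the map $\gamma_0|_{M_{n_\alpha}}$ is a non-degenerate representation of the finite-dimensional factor $M_{n_\alpha}$, and I define $\gamma\colon\ell^\infty(\hh{\HH})\to L^\infty(\hh{\GG})$ by the block-diagonal prescription $\gamma(x)=\sum_\alpha\gamma_0(e_\alpha xe_\alpha)$, with the sum converging strongly because the summands sit in mutually orthogonal corners. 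The image lies in $L^\infty(\hh{\GG})$ since this is a von Neumann algebra containing $\gamma_0(\cc_0(\hh{\HH}))$; normality is built into the construction; injectivity follows from the block-wise injectivity of $\gamma_0|_{M_{n_\alpha}}$; and the intertwining of comultiplications holds on $\cc_0(\hh{\HH})$ and propagates to $\ell^\infty(\hh{\HH})$ by weak$^*$-continuity of the two coproducts. Theorem~\ref{VW1}\eqref{VW12} then completes the proof.

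The main obstacle is precisely the verification that $\mathcal{F}=\mathrm{Irr}(\HH)$: it is the interaction of the algebraic coproduct- and antipode-intertwining properties of $\gamma_0$ with the topological Woronowicz-generation assumption on $V$ that prevents $\mathcal{F}$ from being a proper tensor subcategory (which would correspond to $\pi$ factoring through a non-trivial quotient of $\HH$).
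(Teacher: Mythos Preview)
Your proof is correct, but it takes a considerably more elaborate route than the paper's. Both arguments aim to show that $\gamma_0=\Lambda_{\hh{\GG}}\comp\hh{\pi}$ is injective on $\cc_0(\hh{\HH})$ and then extend it normally to $\ell^\infty(\hh{\HH})$. The paper obtains injectivity in three lines by a duality trick: since the slices $(\eta\tens\id)(V)$ are norm-dense in $\C(\HH)$, any non-zero $\omega\in L^\infty(\HH)_*$ is non-zero on one of them, hence $(\id\tens\omega)(V)=\gamma_0\bigl((\id\tens\omega)(\ww^{\HH})\bigr)\neq 0$; this shows $\gamma_0$ is injective on $\cA_{\hh{\HH}}$, which contains the Pedersen ideal, and injectivity on all of $\cc_0(\hh{\HH})$ follows. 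You instead argue categorically: the set $\mathcal{F}$ of surviving blocks is closed under fusion and contragredient, so the closed span of its matrix coefficients is a \cst-subalgebra containing $\sA_{\hh{V}}=\C(\HH)$, and Peter--Weyl forces $\mathcal{F}=\mathrm{Irr}(\HH)$. This is valid and gives extra structural insight (the obstruction would be a proper tensor subcategory, i.e.~a genuine quantum quotient of $\HH$), but it costs you the assertion that $\gamma_0$ intertwines the unitary antipode, which is true but deserves a reference (e.g.~to \cite{MRW} or \cite{Johanuniv}). For the extension step, the paper simply invokes the general fact that for a $\cc_0$-sum of matrix algebras the multiplier extension coincides with the normal extension and preserves injectivity, and then applies Definition~\ref{Vsubgroup} directly (since $\Lambda_{\hh{\HH}}=\id$, condition~\eqref{Vaesdual} is just $\gamma|_{\cc_0(\hh{\HH})}=\gamma_0$); you construct $\gamma$ block-wise and additionally verify the coproduct intertwining in order to use Theorem~\ref{VW1}\eqref{VW12}, which is correct but unnecessary.
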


\begin{proof}
The subgroup $\HH$ is compact, so we can write $\C_0^\uu(\hh{\HH})=\cc_0(\hh{\HH})$, as the quantum group $\hh{\HH}$ is discrete and hence coamenable. Moreover the \cst-algebra $\cc_0(\hh{\HH})$ is a $\cc_0$-direct sum of matrix algebras. It is not difficult to see the following
\begin{itemize}
\item the multiplier algebra $\M\bigl(\cc_0(\hh{\HH})\bigr)$ is canonically isomorphic to the double dual $\cc_0(\hh{\HH})^{**}$,
\item for any \cst-algebra $\sC$ of operators and any $\Phi\in\Mor\bigl(\cc_0(\hh{\HH}),\sC\bigr)$ the extension of $\Phi$ to a mapping $\M\bigl(\cc_0(\hh{\HH})\bigr)\to\M(\sC)\subset{\sC''}$ is $\sigma$-weakly continuous; in fact the extension of $\Phi$ to multipliers coincides with its \emph{normal extension} (\cite[Theorem 3.7.7]{ped}).
\end{itemize}

Now, just as in the proof of Theorem \ref{Woronequiv} (Eq.~\eqref{Lampi}) we have
\begin{equation}\label{Lampi2}
(\Lambda_{\HH}\comp\pi)\bigl((\omega\tens\id)(\wW^{\GG})\bigr)=(\omega\tens\id)(V).
\end{equation}
Since $V\in\M\bigl(\C_0(\GG)\tens\C_0(\HH)\bigr)$ generates $\C_0(\HH)$, we have by Proposition \ref{generatingAU} that
\begin{equation*}
\bigl\{(\eta\tens\id)(V)\St\eta\in{L^\infty(\hh{\GG})_*}\bigr\}
\end{equation*}
is dense in $\C(\HH)$.

Therefore if $\omega\in{L^\infty(\HH)}_*$ is non-zero then it must be non-zero on some element $(\eta\tens\id)(V)$. It follows that
\begin{equation*}
\eta\bigl((\id\tens\omega)(V)\bigr)\neq{0},
\end{equation*}
so $(\id\tens\omega)(V)\neq{0}$. In view of \eqref{Lampi2} this means that $\Lambda_{\hh{\GG}}\comp\hh{\pi}$ is injective on the subspace
\begin{equation*}
\bigl\{(\id\tens\omega)(\Ww^{\HH})\St\omega\in{L^\infty(\HH)_*}\bigr\}\subset\cc_0(\hh{\HH})
\end{equation*}
which coincides with the Fourier algebra
\begin{equation*}
\cA_{\hh{\HH}}=\bigl\{(\id\tens\omega)(\ww^{\HH})\St\omega\in{L^\infty(\HH)_*}\bigr\},
\end{equation*}
as $\hh{\HH}$ is coamenable (cf.~\eqref{forHatH}). This last subspace contains the Pedersen ideal of $\cc_0(\hh{\HH})$ (cf.~\eqref{c00G}). By \cite[Proposition II.8.2.4]{bl} this implies injectivity of $\Lambda_{\hh{\GG}}\comp\hh{\pi}$ on all of $\cc_0(\hh{\HH})$. Finally $\Lambda_{\hh{\GG}}\comp\hh{\pi}$ remains injective after extension to $\ell^\infty(\hh{\HH})=\M\bigl(\cc_0(\hh{\HH})\bigr)$ because this extension coincides with the extension to the multiplier algebra and such extensions always preserve injectivity (\cite[Proposition 2.1]{lance}).
\end{proof}

The arguments similar to these above appeared earlier in \cite{Salmi}, an article which studies the relations between compact quantum subgroups of a coamenable locally compact quantum group $\GG$ and left invariant $\C^*$-subalgebras of $\C_0(\GG)$.

\subsection{Subgroups of discrete quantum groups}

The main result of this subsection is the following:

\begin{theorem}\label{discrThm}
Let $\HH$ be a closed subgroup of $\GG$ in the sense of Woronowicz and assume that $\GG$ is discrete. Then $\HH$ is discrete and $\HH$ is a closed subgroup of $\GG$ in the sense of Vaes.
\end{theorem}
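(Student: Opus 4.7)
The plan is first to show $\HH$ is necessarily discrete, and then to verify the Vaes condition (Definition \ref{Vsubgroup}) by extending $\hh\pi$ to a normal $*$-homomorphism on the von Neumann algebra level via the Haar-state GNS construction. Because $\GG$ is discrete it is coamenable, so $\C_0^\uu(\GG)=\cc_0(\GG)=\bigoplus_\beta^{\cc_0}\B(V_\beta)$ is a $\cc_0$-direct sum of full matrix algebras. By Theorem \ref{Woronequiv}\eqref{Woronequiv4} the morphism $\pi$ is surjective, and since each $\B(V_\beta)$ is simple every closed two-sided ideal of $\cc_0(\GG)$ is of the form $\bigoplus_{\beta\in K}^{\cc_0}\B(V_\beta)$. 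Hence $\C_0^\uu(\HH)$ is itself a $\cc_0$-direct sum of matrix algebras $\bigoplus_\alpha^{\cc_0}\B(W_\alpha)$, which characterizes $\HH$ as discrete (so also coamenable, and $\C_0^\uu(\HH)=\cc_0(\HH)$). By uniqueness of the minimal-ideal decomposition, $\pi$ induces a bijection $\alpha\mapsto\beta(\alpha)$ between the index sets that matches the matrix summands isomorphically, and this bijection sends the trivial summand to the trivial summand because $\pi$ preserves the counit.

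Correspondingly, decomposing $\ww^\GG=\bigoplus_\beta W^\beta$ into irreducible corepresentations of the compact dual $\hh\GG$, the bicharacter $V=(\id\tens\pi)(\ww^\GG)$ decomposes as $V=\bigoplus_\alpha V^\alpha$ where $V^\alpha$ is the image of $W^{\beta(\alpha)}$ under $\pi|_{\B(V_{\beta(\alpha)})}\colon\B(V_{\beta(\alpha)})\to\B(W_\alpha)$; hence each $V^\alpha$ is (isomorphic to) an irreducible corepresentation of $\hh\GG$, trivial precisely when $\alpha$ is. If $\ww^\HH=\bigoplus_\alpha U^\alpha$ is the analogous decomposition on the $\hh\HH$ side, then $\gamma_0:=\Lambda_{\hh\GG}\comp\hh\pi$ sends the matrix coefficient $u^\alpha_{ij}$ of $U^\alpha$ to the matrix coefficient $v^\alpha_{ij}$ of $V^\alpha$, giving a coproduct-intertwining $*$-algebra morphism on the Hopf $*$-subalgebra of matrix coefficients of $\hh\HH$.

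The essential point is that $\gamma_0$ preserves the Haar state. Using $h_{\hh\GG}(v^\alpha_{kj})=0$ for non-trivial $\alpha$ (Schur orthogonality: Haar vanishes on matrix coefficients of non-trivial irreducibles) and $v^{\mathrm{triv}}_{11}=\I$, a direct evaluation shows that $(\id\tens h_{\hh\GG}\comp\hh\pi)\Delta_{\hh\HH}(u^\alpha_{ij})=(h_{\hh\GG}\comp\hh\pi)(u^\alpha_{ij})\cdot\I$ for all $\alpha$, $i$, $j$; hence $h_{\hh\GG}\comp\hh\pi$ is left-invariant on the dense Hopf $*$-subalgebra and must coincide with $h_{\hh\HH}$ by uniqueness of the Haar state on a compact quantum group. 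Haar preservation then forces $\hh\pi(\ker\Lambda_{\hh\HH})\subset\ker\Lambda_{\hh\GG}$, so $\Lambda_{\hh\GG}\comp\hh\pi$ descends through $\Lambda_{\hh\HH}$ to a Haar-preserving morphism $\tilde\gamma\colon\C_0(\hh\HH)\to\C_0(\hh\GG)$, which extends via the GNS construction for the respective Haar states to the required injective normal $*$-homomorphism $\gamma\colon L^\infty(\hh\HH)\to L^\infty(\hh\GG)$ intertwining the coproducts. The principal obstacle is the Haar-preservation step together with the identification of each $V^\alpha$ as an irreducible corepresentation of $\hh\GG$, both of which rest on the canonical uniqueness of the $\cc_0$-direct-sum decomposition of $\cc_0(\GG)$.
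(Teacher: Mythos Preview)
Your argument is correct and takes a genuinely different route from the paper. The paper proves the result by dualising to the Fourier side: it shows that the adjoint $\hh{\pi}^*\colon\cB_\GG\to\cB_\HH$ maps the algebraic direct sum $\cc_{00}(\GG)$ into $\cc_{00}(\HH)$ (by a computation with the bicharacter), invokes the characterisation of $\cA_\GG$ as the $\cB_\GG$-closure of $\cc_{00}(\GG)$ (Theorem~\ref{qHerz}), and then applies the general criterion of Theorem~\ref{Vaesfourier} to conclude that the Vaes condition holds. In other words, the paper stays on the ``predual'' level and deduces the existence of the normal map $\gamma$ as the adjoint of the restricted map $\cA_\GG\to\cA_\HH$; this is phrased deliberately as a quantum Herz restriction theorem.

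You instead work directly on the compact duals $\hh\GG$ and $\hh\HH$: using the fact that $\pi$ is (up to isomorphism) projection onto a subfamily of matrix blocks, you identify each block of the bicharacter $V$ with an irreducible corepresentation of $\hh\GG$, use Schur orthogonality to show that $\Lambda_{\hh\GG}\comp\hh\pi$ preserves the Haar state, and then obtain the injective normal $\gamma$ via the GNS construction. This is more elementary in the sense that it avoids the Fourier/Fourier--Stieltjes algebra machinery and Theorem~\ref{Vaesfourier} entirely, relying instead on standard compact quantum group theory. The paper's approach, on the other hand, buys a unified picture with the commutative case (Herz restriction) and exhibits the result as an instance of the general equivalence in Theorem~\ref{Vaesfourier}. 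Both proofs begin with the same discreteness argument for $\HH$; the divergence is only in how the normal embedding $L^\infty(\hh\HH)\hookrightarrow L^\infty(\hh\GG)$ is produced.
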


We will prove Theorem \ref{discrThm} by generalizing to the setting of discrete quantum groups the theorem of Herz \cite{herz}, \cite[Proposition 3.23]{arsac} and using Theorem \ref{Vaesfourier} (cf.~Remark \ref{Vf}).

Since $\GG$ is a discrete quantum group, the \cst-algebra $\C_0(\GG)=\C_0^\uu(\GG)=\cc_0(\GG)$ is a $\cc_0$-direct sum:
\begin{equation*}
\cc_0(\GG)=\bigoplus_{\alpha\in\mathcal{R}}M_{n_\alpha}
\end{equation*}
and the embedding of $\cB_\GG$ into $\M\bigl(\C_0^\uu(\GG)\bigr)$ is in this case
\begin{equation}\label{mapping}
\cB_\GG=\C^\uu(\hh{\GG})^*\ni\eta\longmapsto
(\eta\tens\id)(\WW^{\GG})\in\M\bigl(\cc_0(\GG)\bigr)=\ell^\infty(\GG)
\end{equation}
(and $\cA_\GG$ is then mapped to the space of slices of $\ww^\GG=\wW^\GG$ with normal functionals on $L^\infty(\hh{\GG})$). In particular one can use the functionals dual to the canonical basis of the Hopf $*$-algebra sitting inside $\C(\GG)$ (\cite[Theorem 2.2]{cqg}, \cite[Theorem 5.1]{bmt}). These are normal and we easily see that their image in the mapping \eqref{mapping} spans the Pedersen ideal $\cc_{00}(\GG)$ of $\cc_0(\GG)$. The ideal $\cc_{00}(\GG)$ is the algebraic direct sum of the same family of matrix algebras. On the other hand these functionals are linearly dense in $L^\infty(\hh{\GG})_*$ (they correspond to density matrices on $L^2(\hh{\GG})$ which are of finite rank). Therefore
\begin{equation}\label{c00G}
\cc_{00}(\GG)\subset\cA_\GG
\end{equation}
with $\cA_\GG$ viewed as a subspace of $\cc_0(\GG)$.

\begin{theorem}\label{qHerz}
The space $\cA_\GG$ is the closure in $\cB_\GG$ of $\cc_{00}(\GG)$.
\end{theorem}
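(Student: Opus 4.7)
The proof will proceed by establishing the two inclusions that make up the claimed equality. Two facts from the preparatory discussion will be essential: firstly, as recorded in Subsection \ref{QGsub}, the embedding $\cA_\GG\hookrightarrow\cB_\GG$ is isometric; secondly, as observed in the paragraph just before the theorem, under the identification $\cA_\GG\cong L^\infty(\hh{\GG})_*$ provided by Lemma \ref{easy}, the elements of the Pedersen ideal $\cc_{00}(\GG)\subset\cA_\GG$ correspond to normal functionals on $L^\infty(\hh{\GG})$ realised by finite rank density matrices on $L^2(\hh{\GG})$.

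First I would dispatch the inclusion $\overline{\cc_{00}(\GG)}^{\cB_\GG}\subset\cA_\GG$. Any $\cB_\GG$-Cauchy sequence drawn from $\cc_{00}(\GG)\subset\cA_\GG$ is automatically $\cA_\GG$-Cauchy by the isometry, and therefore converges to a limit already lying in the Banach space $\cA_\GG$; the same isometric embedding also forces this limit to coincide with the original $\cB_\GG$-limit. This step is essentially formal.

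For the reverse inclusion $\cA_\GG\subset\overline{\cc_{00}(\GG)}^{\cB_\GG}$, the isometric embedding lets me reduce to showing that $\cc_{00}(\GG)$ is norm dense in $\cA_\GG$ for the intrinsic $\cA_\GG$-norm. Transported to $L^\infty(\hh{\GG})_*$, this is precisely the assertion that the normal functionals identified above form a norm dense subspace, and this is the only genuine point requiring attention. It follows from the standard fact that finite rank operators are norm dense in the trace class of $L^2(\hh{\GG})$, combined with the identification of these functionals as finite rank density matrices recorded in the paragraph preceding the theorem. Combining the two inclusions yields the claim.
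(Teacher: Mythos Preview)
Your proposal is correct and follows essentially the same approach as the paper's proof: both arguments identify $\cc_{00}(\GG)$ with the finite rank normal functionals on $L^\infty(\hh{\GG})$, then use the density of these in $L^\infty(\hh{\GG})_*$ together with the isometric embedding $\cA_\GG\hookrightarrow\cB_\GG$ to conclude. The paper compresses everything into two sentences, while you have unpacked the same reasoning into the two inclusions and made the role of the isometric embedding explicit in each direction.
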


\begin{proof}
As we mentioned before stating Theorem \ref{qHerz} the space $\cc_{00}(\GG)$ viewed inside $\cB_\GG$ is the space of functionals which are normal on $L^\infty(\hh{\GG})$ and whose density matrix is a finite rank operator. The closure of this space of functionals inside the space of all functionals on $\C^\uu(\hh{\GG})$ is the space of all functionals which are normal on $L^\infty(\hh{\GG})$, i.e.~the space $\cA_\GG$.
\end{proof}

\begin{proof}[Proof of Theorem \ref{discrThm}]
The \cst-algebra $\C_0(\GG)=\cc_0(\GG)$ is a $\cc_0$-direct sum of matrix algebras:
\begin{equation*}
\cc_0(\GG)=\bigoplus_{\alpha\in\mathcal{R}}M_{n_\alpha}.
\end{equation*}
By \cite[Proposition II.8.2.4]{bl} any ideal in $\cc_0(\GG)$ is of the form
\begin{equation*}
\bigoplus_{\alpha\in\mathcal{R}_0}M_{n_\alpha}.
\end{equation*}
for some $\mathcal{R}_0\subset\mathcal{R}$ (the direct sum is still in $\cc_0$-sense). Now if $\pi\colon\cc_0(\GG)\to\C_0^\uu(\HH)$ is the epimorphism corresponding to the embedding of $\HH$ into $\GG$ and $\mathcal{R}_0$ corresponds to the kernel of $\pi$, we see that $\C_0^\uu(\HH)$ is the $\cc_0$-direct sum
\begin{equation*}
\C_0^\uu(\GG)=\bigoplus_{\alpha\in\mathcal{R}\setminus\mathcal{R}_0}M_{n_\alpha}.
\end{equation*}
For the same reason the algebra $\C_0(\HH)$ which is a (potentially proper) quotient of $\C_0^\uu(\HH)$ is also a $\cc_0$-direct sum of matrix algebras. In particular $\C_0(\HH)$ is an ideal in $\C_0(\GG)^{**}$. By \cite[Theorem 4.4]{runde} $\HH$ is a discrete quantum group. In particular $\C_0^\uu(\HH)=\C_0(\HH)=\cc_0(\HH)$.

Consider the adjoint of the map $\hh{\pi}\colon\C^\uu(\hh{\HH})\to\C^\uu(\hh{\GG})$, i.e.
\begin{equation*}
{\hh{\pi}}^*\colon\cB_\GG\longrightarrow\cB_{\HH}.
\end{equation*}
We now note that ${\hh{\pi}}^*$ maps $\cc_{00}(\GG)$ into $\cc_{00}(\HH)$. Indeed, ${\hh{\pi}}^*$ is the operation of pre-composing a functional with $\hh{\pi}$. In particular, on the level of $\ell^\infty(\GG)$, where $\cB_{\GG}$ is embedded, we have
\begin{equation*}
\begin{split}
{\hh{\pi}}^*\bigl((\eta\tens\id)(\WW^\HH)\bigr)
&=\bigl([\eta\comp\hh{\pi}]\tens\id\bigr)(\WW^\HH)\\
&=(\eta\tens\id)\bigl((\hh{\pi}\tens\id)(\WW^\HH)\big)\\
&=(\eta\tens\id)\bigl((\id\tens\pi)(\WW^\GG)\big)\\
&=\bar{\pi}\bigl((\eta\tens\id)(\WW^\GG)\bigr),
\end{split}
\end{equation*}
where $\bar{\pi}$ is the canonical extension of $\pi$ to $\M\bigl(\cc_0(\GG)\bigr)=\ell^\infty(\GG)$. Also ${\hh{\pi}}^*$ is a contraction for the norms on $\cB_\GG$ and $\cB_\HH$ (as an adjoint map of a contraction $\hh{\pi}\colon\C^\uu(\hh{\HH})\to\C^\uu(\hh{\GG})$). It follows from Theorem \ref{qHerz} that ${\hh{\pi}}^*$ restricts to a contraction
\begin{equation*}
T\colon\cA_{\GG}\longrightarrow\cA_\HH
\end{equation*}
with dense range; this completes the proof by applying Theorem~\ref{Vaesfourier}.
\end{proof}

\section*{Acknowledgments}

The authors wish to thank Nico Spronk for helpful suggestions and pointing out several important references. The second and third authors were supported by the National Science Centre (NCN) grant no.~UMO-2011/01/B/ST1/06474. The fourth author was supported by National Science Centre (NCN) grant no.~2011/01/B/ST1/05011. The third author's visit to the University of Leeds was supported by EPSRC grant EP/I002316/1.

\end{document}